\newtheorem{thm}{Theorem}[section]
\newtheorem{coro}[thm]{Corollary}
\newtheorem{prop}[thm]{Proposition}
\newtheorem{lem}[thm]{Lemma}
\newtheorem*{thmun}{Theorem}
\theoremstyle{definition}
\newtheorem{defn}[thm]{Definition}
\newtheorem{ex}[thm]{Example}
\newtheorem{rem}[thm]{Remark}
\newtheorem{question}[thm]{Question}
\newcommand{\Rset}{\mathbb{R}}
\newcommand{\nset}{\omega}
\newcommand{\abs}[1]{\lvert#1\rvert}
\newcommand{\card}[1]{\lvert#1\rvert}
\newcommand{\eps}{\varepsilon}
\newcommand{\del}{\delta}
\newcommand{\Del}{\Delta}
\newcommand{\dd}{\,\mathrm{d}}
\newcommand{\zz}{\mathsf{Z}}
\newcommand{\TT}{\mathsf{T}}
\newcommand{\ttt}{\mathsf{T}^{\bullet}}
\newcommand{\yy}{\mathsf{K}}
\newcommand{\teta}{\theta}
\newcommand{\ttau}{{\boldsymbol\tau}}
\newcommand{\clue}[2]{\overset{\eqref{#1}}{#2}}
\newcommand{\haus}[1]{\mathscr{H}^{#1}}
\newcommand{\pack}[1]{\mathscr{P}^{#1}}
\newcommand{\upack}[1]{\overline{\mathscr{P}}^{#1}}
\newcommand{\lpack}[1]{\underline{\mathscr{P}}^{#1}}
\newcommand{\dpack}[1]{\underrightarrow{\mathscr{P}}^{#1}}
\newcommand{\uPack}[1]{\overline{\mathsf{P}}^{#1}}
\newcommand{\lPack}[1]{\underline{\mathsf{P}}^{#1}}
\newcommand{\dPack}[1]{\underrightarrow{\mathsf{P}}^{#1}}
\newcommand{\nuu}{\boldsymbol{\nu}}
\newcommand{\boxm}[1]{\nuu^{#1}}
\newcommand{\lboxm}[1]{\underline{\nuu}^{#1}}
\newcommand{\dboxm}[1]{\underrightarrow{\nuu}^{#1}}
\newcommand{\uboxm}[1]{\overline{\nuu}{}^{#1}}
\DeclareMathOperator{\aDim}{aDim}
\DeclareMathOperator{\updim}{\overline{dim}_{\mathsf{P}}}
\DeclareMathOperator{\hdim}{dim_{\mathsf{H}}}
\DeclareMathOperator{\adim}{dim_{\mathsf{A}}}
\DeclareMathOperator{\sadim}{dim_{\sigma\mathsf{A}}}
\DeclareMathOperator{\lpdim}{\underline{dim}_{\mathsf{P}}}
\DeclareMathOperator{\ubdim}{\overline{dim}_{\mathsf{B}}}
\DeclareMathOperator{\lbdim}{\underline{dim}_{\mathsf{B}}}
\DeclareMathOperator{\dpdim}{\underrightarrow{\mathrm{dim}}_{\mathsf{P}}}
\DeclareMathOperator{\gap}{\mathsf{gap}}
\DeclareMathOperator{\dist}{dist}
\DeclareMathOperator{\diam}{diam}
\newcommand{\arrow}[1]{\overrightarrow{#1}}
\newcommand{\upint}[1][]{\int_{#1}^*\hspace{-1ex}}
\newcommand{\clos}[1]{\overline{#1}}
\newcommand{\upto}{{\nearrow}}
\newcommand{\subs}{\subseteq}
\renewcommand{\leq}{\leqslant}
\renewcommand{\geq}{\geqslant}
\newcommand{\tree}{2^{<\omega}}
\newcommand{\ctree}{2^{\omega}}
\newcommand{\seq}[1]{\langle#1\rangle}
\newenvironment{enum}{\begin{enumerate}[\rm(i)]}{\end{enumerate}}
\begin{document}
% -------------------------------------------------------------

\title
[Packing measures and dimensions on cartesian products]
{Packing measures and dimensions\\ on cartesian products}

\author{Ond\v rej Zindulka}
\address
{Department of Mathematics\\
Faculty of Civil Engineering\\
Czech Technical University\\
Th\'akurova 7\\
160 00 Prague 6\\
Czech Republic}
\email{zindulka@mat.fsv.cvut.cz}
\urladdr{http://mat.fsv.cvut.cz/zindulka}
\subjclass[2010]{28A78, 28A80, 54E35}
\keywords{Packing measure, lower packing measure
packing dimension, lower packing dimension, cartesian product}
\thanks{This project was supported by Department of Education
of the Czech Republic, research project BA MSM 210000010}
%%% ----------------------------------------------------------

\begin{abstract}
Packing measures $\pack{g}(E)$ and Hewitt-Stromberg measures
$\boxm{g}(E)$
and their relatives are investigated.
It is shown, for instance, that for any metric spaces $X,Y$
and any Hausdorff functions $f,g$
$$
    \boxm{g}(X)\cdot\pack{h}(Y)\leq\pack{gh}(X\times Y)
$$
The inequality for the corresponding dimensions is established and
used for a solution of a problem of Hu and Taylor:
If $X\subs\Rset^n$, then
$$
    \inf\{\updim X\times Y-\updim Y:Y\subs\Rset^n\}
    =\liminf_{X_n\upto X}\lbdim X_n.
$$
Corresponding dimension inequalities for products of measures
are established.
%
%A proof of a density inequality for upper packing measure
%that does not use Vitali property is performed.
\end{abstract}

%%% -----------------------------------------------------------
\maketitle
%%% -----------------------------------------------------------
%%%%%%%%%%%%%%%%%%%%%%%%%%%%%%%%%%%%%%%%%%%%%%%%%%%%%%%%%%%
%%%%%%%%%%%%%%%%%%%%%%%%%%%%%%%%%%%%%%%%%%%%%%%%%%%%%%%%%%%
\section{Introduction}
%%%%%%%%%%%%%%%%%%%%%%%%%%%%%%%%%%%%%%%%%%%%%%%%%%%%%%%%%%%

Consider separable metric spaces and their Hausdorff, packing
and lower packing dimensions denoted, respectively, by $\hdim$, $\updim$ and
$\lpdim$ (the definitions are provided below).
In 1981 Tricot~\cite{MR633256} proved that if $X,Y\subs\Rset^n$, then
\begin{equation}\label{eq:1}
  \hdim X+\updim Y\leq\updim X\times Y
\end{equation}
and this inequality was later generalized to arbitrary separable metric spaces
by Howroyd~\cite{MR1362951}.
In 1993 Hu and Taylor~\cite[(3.12)]{MR1376540} asked if the inequality is
sharp; in more detail, they defined, for $X\subs\Rset$, a dimension
\begin{equation}\label{aDim0}
  \aDim X=\inf\{\updim X\times Y-\updim Y: Y\subs\Rset\},
\end{equation}
noticed that~\eqref{eq:1} yields $\hdim X\leq\aDim X$
and asked if $\aDim X=\hdim X$ for all $X\subs\Rset$.

In 1996 two papers by Bishop and Peres~\cite{MR1376540} and
Xiao~\cite{MR1388205} independently proved that if $X,Y\subs\Rset$ are compact,
then the inequality~\eqref{eq:1} improves to
\begin{equation}\label{eq:2}
  \lpdim X+\updim Y\leq\updim X\times Y,
\end{equation}
doubting thus the conjectured $\aDim X=\hdim X$.
Can one prove~\eqref{eq:2} for arbitrary $X,Y\subs\Rset$ or even in
a more general setting?
The proof in~\cite{MR1376540} is very technical and relies upon
geometry of Euclidean spaces. On the other hand, the Xiao's~\cite{MR1388205} proof
of~\eqref{eq:2} is a rather straightforward and simple application of Baire Category Theorem
and can be thus easily extended to any compact metric spaces; and using the
Joyce and Preiss theorem~\cite{MR1346667}, to analytic metric spaces.
But it seems impossible to exploit the idea any further (actually~\cite{MR2291648}
states~\eqref{eq:2} for arbitrary subsets of the line, but the proof therein is
not very convincing).

It, however, turns out that a much finer, more general and sharper inequality
can be proved in a rather general setting.
Let us outline it in some detail.
The dimensions in~\eqref{eq:2} are,
like many other fractal dimensions, rarefaction indices of fractal measures:
the packing dimension $\updim X$ is the number $s_0$ such that $\pack s(X)=0$
for all $s>s_0$ and $\pack s(X)=\infty$ for all $s<s_0$.
The lower packing dimension is defined likewise from the so called
Hewitt-Stromberg measures $\boxm s(X)$. The inequality~\eqref{eq:2} is
a trivial consequence of the integral inequality
\begin{equation}\label{eq:krapka}
  \pack{s+t}(E)\geq\int\boxm{s}(E_x)\dd\pack{t}(x)
\end{equation}
that holds for any subset $E\subs X\times Y$ of a product of metric spaces.
This inequality, however, does not really help with the solution of the
Hu--Taylor problem.
The crucial step towards its solution is the following
observation. Given a set $E$ in a metric space and $s\geq0$, define
the \emph{lower box content}
$\boxm{s}_0(E)=\liminf_{\del\to0}C_\del(E)/\del^{s}$, where $C_\del(E)$
is the maximal number of points within $E$ that are mutually more than $\del$
apart. The Hewitt-Stromberg measure $\boxm s(E)$ obtains from $\boxm{s}_0$
by the standard \emph{Method I construction}:
$\boxm s(E)=\inf\sum_n\boxm{s}_0(E_n)$, where the infimum is over all countable
covers of $E$. The resulting set function is an outer Borel measure.
Let's define another set function arising from $\boxm{s}_0$ by the formula
$\dboxm s(E)=\inf\sup_n\boxm{s}_0(E_n)$, where the infimum is this time over all
\emph{increasing} covers of $E$. If $\boxm{s}_0$ were, like e.g.~the upper
box content, subadditive, we would get the same values as from Method I.
But it is not. The set function $\dboxm s$ substantially differs from the
Hewitt-Stromberg measure. It is a not measure, it is not even finitely subadditive,
but it turns to be the right mean for solution of the Hu--Taylor problem.
Once one figures out the proof of~\eqref{eq:krapka}, it is easy to improve it to
\begin{equation}\label{eq:krapka2}
  \pack{s+t}(E)\geq\int\dboxm{s}(E_x)\dd\pack{t}(x).
\end{equation}
Consequently the rarefaction index $\dpdim$ of $\dboxm s$ satisfies,
for any metric spaces $X,Y$,
\begin{equation}\label{eq:krapka3}
  \dpdim X+\updim Y\leq\updim X\times Y.
\end{equation}
Since $\dpdim X$ can be easily expressed in terms of lower box dimension
(cf.~Definition~\ref{def:dpdim}), it is not that esoteric.
This improvement of inequality~\eqref{eq:2} gives the best-so-far
lower estimate for the dimension of~\eqref{aDim0}: $\aDim X\geq\dpdim X$ for
all $X\subs\Rset$ and actually for any metric space $X$.

As to the upper estimate of $\aDim$, Xiao~\cite{MR1388205} proved that for any
$X\subs\Rset$, $\aDim X$ is estimated from above by the lower box
dimension. And, luckily, analysis of his proof revealed that one can work upon
its ideas to push the upper estimate down to $\dpdim X$.
Therefore~\eqref{eq:krapka3} is optimal. We arrived at the
solution of Hu--Taylor problem:
\begin{thmun}
\begin{enum}
\item $\dpdim X+\updim Y\leq\updim X\times Y$ holds for any metric spaces $X,Y$.
\item For any $X\subs\Rset^n$ there is a compact set $Y\subs\Rset^n$ such that
$\dpdim X+\updim Y=\updim X\times Y$.
\item In particular, $\inf\{\updim X\times Y-\updim Y: Y\subs\Rset^n\}=\dpdim X$
for all $X\subs\Rset^n$.
\end{enum}
\end{thmun}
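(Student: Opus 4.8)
The plan is to prove the three parts in order, each building on the previous.

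For part (i), the strategy is to deduce it directly from the integral inequality~\eqref{eq:krapka2}, exactly as the excerpt indicates. Fix metric spaces $X,Y$ and suppose $s<\dpdim X$ and $t<\updim Y$; I must show $s+t\leq\updim X\times Y$. By the definition of the rarefaction index of $\dboxm{}$ one has $\dboxm{s}(X)=\infty$, and by the definition of $\updim$ one has $\pack{t}(Y)>0$ (indeed $\pack{t}(Y)=\infty$). Taking $E=X\times Y$ in~\eqref{eq:krapka2}, for every $x\in X$ the section $E_x$ equals $Y$, so the integrand is the constant $\dboxm{s}(X\times Y)_x$; but here I need the section of $E$ in the \emph{second} coordinate if I want $\pack{t}$ on the $X$-side, so I instead take the product the other way round, applying~\eqref{eq:krapka2} with the roles of the two factors swapped, obtaining $\pack{s+t}(X\times Y)\geq\int\dboxm{s}(Y^y)\dd\pack{t}(y)$ where the sections are now over $X$. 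For $y\in Y$ the section is $X$, so $\dboxm{s}(X)=\infty$, and integrating an a.e.-infinite function against the nonzero measure $\pack{t}\restriction Y$ yields $\pack{s+t}(X\times Y)=\infty$. Hence $\updim X\times Y\geq s+t$, and letting $s\upto\dpdim X$, $t\upto\updim Y$ gives (i). A small point to be careful about: one must check the section of a product set, and the restriction of $\pack{t}$ to $Y$, are legitimate in the hypotheses of~\eqref{eq:krapka2}; this should follow from the way that inequality is stated earlier.

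For part (ii), the core is the reverse inequality, and here I would follow and adapt Xiao's Baire-category argument as the excerpt promises. The goal: given $X\subs\Rset^n$, construct a compact $Y\subs\Rset^n$ with $\updim X\times Y\leq\dpdim X+\updim Y$. Xiao's scheme is to fix a target value $s$ slightly above $\dpdim X$ and produce, via Baire category in a suitable complete metric space of compact sets (e.g.\ a space of Moran-type or homogeneous Cantor-type sets in $\Rset^n$ with prescribed gap structure), a compact $Y$ with $\updim Y$ any prescribed value and with the property that the natural covers of $X\times Y$ by products of dyadic-type pieces are efficient enough to force $\updim X\times Y\leq s+\updim Y$. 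The key is that $\dpdim X$, by Definition~\ref{def:dpdim}, is expressible through lower box dimensions of members of an increasing sequence exhausting $X$: write $X=\bigcup_k X_k$ with $X_k\upto X$ and $\lbdim X_k$ close to $\dpdim X$ along a subsequence of scales; then on each $X_k$ one has good control of $C_\del(X_k)$ along those scales, and one designs $Y$ so that its defining scales are taken from precisely those good scales. A generic $Y$ in the Baire-category sense then has, at the relevant scales, a packing structure that multiplies against $X_k$'s covering numbers to give the sharp exponent. Assembling the countably many conditions (one per $k$, per rational approximation, per scale) as comeager sets and intersecting them produces the desired compact $Y$.

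For part (iii), this is immediate from (i) and (ii): (i) gives $\updim X\times Y-\updim Y\geq\dpdim X$ for every $Y\subs\Rset^n$, so the infimum is $\geq\dpdim X$; (ii) exhibits a single $Y$ attaining equality, so the infimum is $\leq\dpdim X$ — with the caveat that one needs $\updim Y<\infty$ in the witnessing $Y$ so that the subtraction $\updim X\times Y-\updim Y$ is meaningful, which the construction in (ii) guarantees since $Y$ is a bounded subset of $\Rset^n$ (so $\updim Y\leq n$). The main obstacle is unquestionably part (ii): translating Xiao's upper bound, which as stated in~\cite{MR1388205} only reaches the lower \emph{box} dimension of $X$, down to the genuinely smaller quantity $\dpdim X$. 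This requires reorganizing the Baire-category construction so that it senses not a single worst scale regime of $X$ as a whole, but the better scale behavior available on an increasing exhaustion $X_k\upto X$ — i.e.\ marrying the "increasing cover" flavor built into $\dboxm{}$ and $\dpdim$ with the genericity argument. Everything else (the deduction in (i), the triviality in (iii), and the finiteness bookkeeping) is routine once~\eqref{eq:krapka2} and Definition~\ref{def:dpdim} are in hand.
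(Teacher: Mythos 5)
Your treatment of (i) and (iii) is correct and coincides with the paper's: (i) is exactly the product-set specialization of \eqref{eq:krapka2} (Corollary~\ref{int:rec2}(i) combined with the rarefaction-index identities of Proposition~\ref{dim:dp}), and (iii) follows formally from (i) and (ii) as you say. The problem is part (ii), which is the substance of the theorem, and there your proposal is a plan rather than a proof — and a plan that diverges from what the paper actually does. The paper does \emph{not} obtain $Y$ by a Baire-category genericity argument in a space of compact sets; Baire category appears in Xiao's work (and in Example~\ref{ex:3x}) only on the \emph{lower-bound} side. The witnessing set is built explicitly: for $s>\dpdim X$ one picks a scale $\Del$ with $\boxm{s}_{\Del}(X)=0$, then constructs a Cantor-type set $\yy$ whose defining radii $r_n$ are drawn from $\Del$ and whose branching numbers $g(n)$ are tuned so that $G(n)r_n^{1-s/m}\to1$ (condition \eqref{that3}), and sets $\zz=\yy^m$. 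Two quantitative ingredients are then indispensable and are entirely absent from your sketch: (a) a lower bound $\updim\zz\geq m-s$, which the paper gets by exhibiting a probability measure $\mu$ on $\zz$ with $\mu\leq\uboxm{m-s}$ (Lemma~\ref{xi111}) — genericity alone gives you no handle on $\updim Y$ from below; and (b) the capacity estimates \eqref{xi4}--\eqref{xi6} bounding $C_r(X\times\zz)r^m$ at \emph{all} small $r$, which use the $(Q,m)$-homogeneity (doubling property) of $X\subs\Rset^m$ in an essential way to interpolate between the designed scales $r_n$ — your proposal never says where the Euclidean/doubling structure of $X$ enters, yet without it the scheme fails (hence the paper's open question about infinite Assouad dimension).

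A second concrete gap: your construction, even if completed, yields for each fixed $s>\dpdim X$ a compact $\zz_s$ with $\updim X\times\zz_s\leq s+\updim\zz_s$, which suffices for the infimum in (iii) but not for the exact equality demanded by (ii) with a \emph{single} compact $Y$. The paper closes this by taking $s_k=\dpdim X+\frac1k$, placing $\zz_{s_k}$ inside $[2^{-k-1},2^{-k}]$, and setting $Z=\bigcup_k\zz_{s_k}\cup\{0\}$, using countable stability of $\updim$ on both $Z$ and $X\times Z$ to get $\updim X\times Z\leq m=\dpdim X+\updim Z$. Your phrase about intersecting comeager conditions ``per rational approximation'' gestures at this but does not resolve the tension that a single $Y$ must simultaneously be optimal for every $s>\dpdim X$. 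Finally, the interaction you correctly identify — marrying the increasing exhaustion implicit in $\dpdim$ with the construction of $Y$ — is handled in the paper not through $X_k\upto X$ directly but through the equivalent characterization $\dpdim X=\inf\{s:\exists\Del\ \boxm{s}_\Del(X)=0\}$ of Proposition~\ref{dim:dp}(v), which packages the exhaustion into a single scale $\Del$ from which the radii of $Y$ are then drawn; without some such reduction your ``one condition per $k$, per scale'' bookkeeping has no precise content.
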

Actually, with a proper extension of the definition of $\aDim$, the theorem remains
valid for any space $X$ whose Assouad dimension is finite.

\smallskip
The paper is organized as follows. In Section~\ref{sec:measures} we recall
in detail packing and Hewitt-Stromberg measure.
Then we introduce scaled measures and upper/lower box and packing measures
and list some elementary properties of these measures.
In Section~\ref{sec:int} we state and prove~\eqref{eq:krapka2} and other
integral inequalities involving these measures and derive inequalities for
cartesian rectangles.
In Section~\ref{sec:dim} the notions of upper/lower box and packing dimensions
are recalled and the dimension $\dpdim$ is introduced. Then
we set up and prove~\eqref{eq:krapka3} and other dimension inequalities
following from the respective results of Section~\ref{sec:int}.
Section~\ref{sec:Xiao} is devoted to the solution of the Hu--Taylor problem
in a rather general setting.
The paper is concluded with Section~\ref{sec:rem} that, besides various comments,
presents applications to dimension theory of Borel measures,
and lists some open problems.

%%%%%%%%%%%%%%%%%%%%%%%%%%%%%%%%%%%%%%%%%%%%%%%%%%%%%%%%%%%
\section{The measures}\label{sec:measures}
%%%%%%%%%%%%%%%%%%%%%%%%%%%%%%%%%%%%%%%%%%%%%%%%%%%%%%%%%%%

In this section we set up definitions of packing and box measures whose
behavior on cartesian products is investigated in the next section.
We begin with recalling two common measures --- the packing measure
and the Hewitt-Stromberg measure.
Then we generalize these notions, notice that via this generalization
they are closely related and introduce the lower packing measure and a couple of
more measures and pre-measures.

Since the technique used is rather standard, we present only few brief proofs.

Throughout the section, $X$ stands for a separable metric space with a metric $d$.
Notation used includes $B(x,r)$ for the closed ball of radius $r>0$ centered
at $x\in X$;
$\clos A$ for the closure of a set $A$;
$\abs A$ for the cardinality of a set $A$; and $\nset$
for the set of all natural numbers including zero.

\subsection*{Pre-measures}
It will be  convenient to establish elementary
features of the following constructions of pre-measures from pre-measures.

A \emph{set function} is a mapping $\tau$ that assigns to each $E\subs X$ a value
$\tau(E)\in[0,\infty]$. The notions of \emph{monotone}, \emph{subadditive}
and \emph{countably subadditive} set function are self-explaining.
A set function will be called a \emph{pre-measure} if it is monotone
and $\tau(\emptyset)=0$.
A pre-measure $\tau$ is \emph{metric} if $\tau(A\cup B)\geq\tau(A)+\tau(B)$
whenever $A,B\subs X$ are \emph{separated}, i.e.~$\dist(A,B)>0$.
Departing slightly from the common usage we call a pre-measure an
\emph{outer measure} if its restriction
to the algebra of Borel sets is a Borel measure.
An outer measure $\tau$ is \emph{Borel-regular} if for each $A\subs X$ there is
$B\supseteq A$ Borel with $\tau(B)=\tau(A)$.

The following is the Munroe's \emph{Method I construction},
see~\cite{MR0281862}. Its point is that it produces a countably subadditive
pre-measure from any pre-measure.
Given a pre-measure $\tau$, the new pre-measure $\widehat\tau$ is defined by
$$
  \widehat\tau(E)=\inf\left\{\sum\nolimits_n
  \tau(E_n):E\subs\bigcup\nolimits_n E_n\right\}.
$$
We shall also make use of a ``directed'' variation of
\emph{Method I}. Write $E_n\upto E$ to denote that $\seq{E_n}$ is an
increasing sequence of sets with union $E$.
$$
  \arrow{\tau}(E)=\liminf_{E_n\upto E}\tau(E_n)=
  \inf\left\{\sup\nolimits_n
  \tau(E_n):E_n\upto E\right\}.
$$
Let us call this construction \emph{Method D} for future reference.
We list some elementary properties of the two operations.

\begin{lem}\label{lem:MI}
Let $\tau$ be a metric pre-measure on $X$.
\begin{enum}
\item
$\widehat{\tau}$ is an outer measure.
\item
If $\tau$ is Borel-regular, then so is $\widehat\tau$ and
$\widehat{\tau}\leq\arrow{\tau}$.
\item
If $\tau$ is subadditive, then $\arrow{\tau}=\widehat{\tau}$.
\end{enum}
\end{lem}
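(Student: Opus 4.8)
The plan is to verify the three assertions in the order listed, each being a standard verification with the Method D variation being the only mild novelty. For (i), I would first check monotonicity and $\widehat\tau(\emptyset)=0$, which are immediate from the definition, and then countable subadditivity: given $A\subs\bigcup_k A_k$, pick for each $k$ a cover $\seq{E^k_n}_n$ of $A_k$ with $\sum_n\tau(E^k_n)\le\widehat\tau(A_k)+\eps 2^{-k}$, and observe that the doubly-indexed family $\seq{E^k_n}_{k,n}$ covers $A$. To promote $\widehat\tau$ from a countably subadditive pre-measure to an outer measure, invoke Carath\'eodory's criterion: it suffices that $\widehat\tau$ be a metric outer measure on the power set, i.e.\ $\widehat\tau(A\cup B)\ge\widehat\tau(A)+\widehat\tau(B)$ whenever $\dist(A,B)>0$. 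Here I would use that $\tau$ is metric: from a near-optimal cover $\seq{E_n}$ of $A\cup B$, refine by splitting each $E_n$ into its trace on a suitable $\del/3$-neighbourhood of $A$ and the rest, so that no piece meets both $A$ and $B$; separateness of the two groups lets one apply the metric inequality for $\tau$ termwise and sum. (This is exactly Munroe's argument; I would cite \cite{MR0281862} and keep it to a sentence or two.)

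For (iii), the inequality $\widehat\tau\le\arrow\tau$ is trivial — any increasing sequence $E_n\upto E$ is in particular a countable cover of $E$, so $\widehat\tau(E)\le\sup_n\tau(E_n)$, and taking the infimum over such sequences gives $\widehat\tau(E)\le\arrow\tau(E)$ without any hypothesis. For the reverse inequality under subadditivity: given a countable cover $E=\bigcup_n E_n$, set $F_N=\bigcup_{n\le N}E_n$, so $F_N\upto E$; finite subadditivity gives $\tau(F_N)\le\sum_{n\le N}\tau(E_n)\le\sum_n\tau(E_n)$, hence $\arrow\tau(E)\le\sup_N\tau(F_N)\le\sum_n\tau(E_n)$, and taking the infimum over covers yields $\arrow\tau(E)\le\widehat\tau(E)$. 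Note this half needs only subadditivity, not the metric property, which matches the way (iii) is stated.

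Assertion (ii) splits into two claims. That $\widehat\tau\le\arrow\tau$ when $\tau$ is Borel-regular is the one place I expect to do a little work, since without subadditivity the cheap argument of (iii) is unavailable: a single increasing sequence $E_n\upto E$ need not satisfy $\sup_n\tau(E_n)\ge\widehat\tau(E)$ unless we can compare $\tau$ on the $E_n$ with $\widehat\tau$ on $E$. The idea is to replace $E_n$ by a Borel hull $B_n\supseteq E_n$ with $\tau(B_n)=\tau(E_n)$, pass to $B_n'=\bigcap_{m\ge n}B_m$ (still Borel, still $\supseteq E_n$, with $\tau(B_n')\le\tau(B_n)=\tau(E_n)$ by monotonicity) so that $\seq{B_n'}$ is increasing; then $B:=\bigcup_n B_n'\supseteq E$ is Borel, and since $\widehat\tau$ is a Borel measure (by (i)) and $B_n'\upto B$ we get $\widehat\tau(E)\le\widehat\tau(B)=\lim_n\widehat\tau(B_n')\le\lim_n\tau(B_n')\le\sup_n\tau(E_n)$; infimizing over increasing covers of $E$ gives $\widehat\tau(E)\le\arrow\tau(E)$. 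For Borel-regularity of $\widehat\tau$: take a near-optimal cover $\seq{E_n}$ of $A$, replace each $E_n$ by a Borel hull $B_n$ with $\tau(B_n)=\tau(E_n)$, intersect over successively finer covers (one for each $k$) to produce a Borel $B\supseteq A$ with $\widehat\tau(B)\le\widehat\tau(A)+1/k$ for every $k$, hence $\widehat\tau(B)=\widehat\tau(A)$. The main obstacle, such as it is, is the increasing-cover argument for $\widehat\tau\le\arrow\tau$: one must be careful that the Borel hulls can be chosen to nest, which is why the $B_n'=\bigcap_{m\ge n}B_m$ trick (using monotonicity of $\tau$ together with $\tau(B_m)=\tau(E_m)\le\tau(E)$-type bounds) is needed rather than the naive choice.
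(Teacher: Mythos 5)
Your parts (i) and (ii) are correct and follow essentially the paper's route: (i) is Munroe's Method I argument together with Carath\'eodory's criterion for metric outer measures, and your Borel-hull construction in (ii) --- nesting the hulls via $B_n'=\bigcap_{m\ge n}B_m$ and then using continuity from below of the Borel measure $\widehat\tau$ on the increasing Borel sets $B_n'$ --- is precisely an unrolled proof of the increasing-sets property $\sup_n\widehat\tau(E_n)=\widehat\tau(E)$ that the paper cites from Rogers for Borel-regular outer measures. Likewise your half of (iii) proving $\arrow\tau\le\widehat\tau$, via $F_N=\bigcup_{n\le N}E_n$ and finite subadditivity, is the same disjointification argument as the paper's.

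The gap is in (iii), in the direction $\widehat\tau\le\arrow\tau$, which you declare trivial ``without any hypothesis'': from an increasing sequence $E_n\upto E$, viewed as a countable cover, you conclude $\widehat\tau(E)\le\sup_n\tau(E_n)$. But a cover only yields $\widehat\tau(E)\le\sum_n\tau(E_n)$, and the sum can be infinite while the supremum stays bounded (take $\tau(E_n)=1$ for all $n$); no single $E_n$ covers $E$, so there is no way to extract the supremum from the covering definition. This inequality is in fact the delicate half of the lemma: it is exactly what part (ii) is for, and the paper's own proof of (iii) disposes of it with the words ``the opposite inequality follows from (ii)'', i.e.\ by the increasing-sets mechanism you yourself set up there. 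The fix is therefore already inside your write-up --- replace the one-liner by an appeal to (ii). Note that this makes the $\widehat\tau\le\arrow\tau$ half of (iii) depend on Borel-regularity of $\tau$, a dependence the paper's statement of (iii) also glosses over but which is harmless in context, since every pre-measure to which the lemma is applied (e.g.\ $\pack{g}_{\Del,0}$, via Lemma~\ref{lem:Del}) is Borel-regular.
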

\begin{proof}
(i) By~\cite[Theorem 4]{MR0281862}, $\widehat\tau$ is countably subadditive.
It is easy to check that since $\tau$ is metric, so is $\widehat\tau$.
Hence (i) follows by~\cite[Theorem 19]{MR0281862}.

(ii)
It is obvious that since $\tau$ is Borel-regular, so is $\widehat\tau$.
It is also obvious that $\widehat\tau\leq\tau$; thus
$\arrow{\widehat\tau}\leq\arrow\tau$.
As $\widehat\tau$ is a Borel-regular outer measure,
$\sup_n\widehat\tau(E_n)=\widehat\tau(E)$ holds for any sequence
$E_n\upto E$, cf.~\cite[Theorem 4]{MR0281862}. Hence
$\widehat\tau\leq\arrow{\widehat\tau}$ and
$\widehat\tau\leq\arrow\tau$ follows.

(iii) Let $E_n\upto E$. Set $A_0=E_0$ and $A_n=E_n\setminus E_{n-1}$
for $n>0$. Then $A_n$'s cover $E$ and by assumption,
$\sup_n\tau(E_n)\leq\sup_n\sum_{i\leq n}\tau(A_i)=\sum_{n\in\nset}\tau(A_n)$.
This yields $\arrow\tau\leq\widehat\tau$. The opposite
inequality follows from (ii).
\end{proof}

We now recall two classical measures: the packing measure and the
Hewitt-Stromberg measure. They will play an important role in our considerations and
moreover will motivate our definitions of scaled measures.

\subsection*{Packing measure}
There are perhaps too many notions of packing and packing measure.
We choose the one used e.g.~in~\cite{MR1362951} and~\cite{MR1346667}; the other definitions
are briefly discussed in Section~\ref{sec:rem}.
A family $\{(x_i,r_i):i\in I\}\subs X\times(0,\infty)$ is called a \emph{packing} if
$x_i\notin B(x_j,r_j)$ for all $i\neq j$ in $I$.
Equivalently, if $d(x_i,x_j)>r_i$.
It is a \emph{packing  of a set} $E\subs X$ if
$x_i\in E$ for all $i\in I$.
It is \emph{$\del$-fine} if $r_i\leq\del$ for all $i\in I$.

We shall need the following simple lemma at a couple of instances.

\begin{lem}\label{shift}
For any finite packing $\{(x_i,r_i):i\in I\}$ there is $\eps>0$ such
that $\{(x'_i,r'_i):i\in I\}$ is a packing whenever $d(x'_i,x_i)<\eps$
and $r'_i<r_i+\eps$ for all $i\in I$.
\end{lem}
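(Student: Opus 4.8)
The statement is purely finite and combinatorial, so the plan is to argue directly from the definition of a packing. Recall that $\{(x_i,r_i):i\in I\}$ is a packing precisely when $d(x_i,x_j)>r_i$ for all $i\neq j$ (equivalently, also $d(x_i,x_j)>r_j$, since the condition is symmetric only after quantifying over the ordered pair). Since $I$ is finite, for each ordered pair $i\neq j$ the quantity $\gamma_{ij}:=d(x_i,x_j)-r_i$ is strictly positive, and I set $\gamma:=\min\{\gamma_{ij}:i\neq j\text{ in }I\}>0$, the minimum of finitely many positive numbers.

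Now I would choose $\eps>0$ small enough that $3\eps<\gamma$, and suppose $d(x_i',x_i)<\eps$ and $r_i'<r_i+\eps$ for all $i$. For $i\neq j$ the triangle inequality gives
$$
  d(x_i',x_j')\geq d(x_i,x_j)-d(x_i,x_i')-d(x_j,x_j')>d(x_i,x_j)-2\eps,
$$
hence
$$
  d(x_i',x_j')-r_i'>\bigl(d(x_i,x_j)-2\eps\bigr)-\bigl(r_i+\eps\bigr)
  =\gamma_{ij}-3\eps\geq\gamma-3\eps>0.
$$
Thus $d(x_i',x_j')>r_i'$ for every ordered pair $i\neq j$, which is exactly the condition that $\{(x_i',r_i'):i\in I\}$ is a packing. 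Note the choice of $\eps$ depends only on the original packing, as required.

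There is no real obstacle here; the only point that needs a moment's care is bookkeeping the constant $3\eps$ (two perturbations of the points plus one of the radius) and remembering that the packing condition $d(x_i,x_j)>r_i$ must be checked for ordered pairs, so it suffices — and is necessary — to have $\gamma_{ij}>0$ for all ordered $i\neq j$, which is precisely what "$\{(x_i,r_i)\}$ is a packing" provides. Finiteness of $I$ is used exactly once, to pass from positivity of each $\gamma_{ij}$ to positivity of their minimum.
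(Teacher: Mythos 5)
Your proof is correct and is essentially the paper's argument: the paper simply sets $\eps=\frac13\min\{d(x_i,x_j)-\max(r_i,r_j):i\neq j\}$, which is exactly your $\gamma/3$ (the unordered minimum with $\max(r_i,r_j)$ coincides with your ordered minimum of $d(x_i,x_j)-r_i$), and the verification you spell out is the one the paper leaves implicit. The only cosmetic difference is that the paper takes $\eps$ equal to $\gamma/3$ rather than strictly below it, which still works because the perturbation inequalities $d(x_i',x_i)<\eps$ and $r_i'<r_i+\eps$ are strict.
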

\begin{proof}
It is enough to put $\eps=\frac13\min\{d(x_i,x_j)-\max(r_i,r_j):i\neq j\}$.
\end{proof}
Following~\cite{MR1362951}, a \emph{Hausdorff function} is a nondecreasing function
$g:(0,\infty)\to(0,\infty)$.
No continuity of $g$ is \emph{a priori} imposed.
Hausdorff functions are (partially) ordered by $f\prec g$ iff
$\varlimsup_{r\to0}f(r)/g(r)=0$.

If $g$ is a Hausdorff function and $\pi=\{(x_i,r_i):i\in I\}$ a packing,
we write
$$
  g(\pi)=\sum\{g(r_i):i\in I\}.
$$

\begin{defn}[{\cite{MR1362951,MR1346667}}]\label{packclassic}
Let $g$ be a Hausdorff function and $E\subs X$. Let
%\begin{align*}
%  &\pack{g}_0(E)=\inf_{\del>0}\pack{g}_\del(E)
%\shortintertext{where}
%  &\pack{g}_\del=\sup\{g(\pi):
%  \text{$\pi$ is a $\del$-fine packing of $E$}\}.
%\intertext{The \emph{$g$-dimensional packing measure} $\upack{g}(E)$ of $E$
%is defined thus:}
%  &\upack{g}(E)=\widehat{\pack{g}_0}(E).
%\end{align*}
$$
  \pack{g}_0(E)=\inf_{\del>0}\pack{g}_\del(E),\quad
  \text{where }
  \pack{g}_\del=\sup\{g(\pi):
  \text{$\pi$ is a $\del$-fine packing of $E$}\}.
$$
The \emph{$g$-dimensional packing measure} of $E$
is defined by $\pack{g}(E)=\widehat{\pack{g}_0}(E)$.

In the particular case when $g(r)=r^s$ for some constant $s\geq0$, we write,
as usual, $\pack{s}$ instead of $\pack{g}$; and the same license is used for
other pre-measures and measures obtained from Hausdorff functions.
\end{defn}

It is well-known and easy to see that $\pack{g}_0$ is an additive Borel-regular
metric pre-measure and thus $\pack{g}$ is a Borel-regular outer measure.

\subsection*{Hewitt-Stromberg measure}
For $F\subs X$ define $\gap F=\inf\{d(x,y):x,y\in F,\ x\neq y\}$.
For $E\subs X$ and $\del>0$ denote
\begin{equation}\label{eq:capacity0}
  C_\del(E)=\sup\{\card{F}:F\subs E, \gap F>\del\}
\end{equation}
the \emph{$\del$-capacity} of $E$.
The following natural notion appeared first in~\cite[(10.51)]{MR0188387}.
It was investigated and got the name in~\cite{MR827889,MR918685}.
Another excellent reference is~\cite{MR1484412}.
\begin{defn}[{\cite{MR0188387}}]
Let $g$ be a Hausdorff function and $E\subs X$. Let
\begin{equation}\label{capac2}
  \boxm{g}_0(E)=\liminf_{\del\to0}C_\del(E)g(\del).
\end{equation}
The \emph{$g$-dimensional Hewitt-Stromberg measure} of $E$ is defined by
$\boxm{g}(E)=\widehat{\boxm{g}_0}(E)$.
\end{defn}
It is easy to check that $\boxm{g}_0$ is a Borel-regular metric pre-measure
(though is does not have to be subadditive)
and thus $\boxm{g}$ is a Borel-regular outer measure.
%
%\smallskip
%
%Motivated by the notions of packing and Hewitt-Stromberg measure, we now introduce
%two classes of measures that generalize the two notions.

\subsection*{Scaled measures}
A set $\Del\subs(0,\infty)$ such that $0\in\clos\Del$ is termed a \emph{scale}.
We use $\Del$ as a generic symbol for a scale.
A packing $\{B(x_i,r_i):i\in I\}$ is \emph{$\Del$-valued} if $r_i\in\Del$ for all
$i\in I$.
A \emph{$(\Del,\del)$-packing} is a packing that is
$\Del\cap(0,\del]$-valued, i.e.~$\Del$-valued and $\del$-fine.
A packing $\{B(x_i,r_i):i\in I\}$ is \emph{uniform} if $r_i=r_j$ for
all $i,j\in I$.

We now introduce an auxiliary notion of a $\Del$-scaled packing measure.
It is a straight generalization of the packing measure,
the only difference is that the radii allowed in packings are
limited to the set $\Del$.
\begin{defn}
Let $g$ be a Hausdorff function, $\Del$ a scale and $E\subs X$.
Let $\pack{g}_{\Del,0}(E)=\inf_{\del>0}\pack{g}_{\Del,\del}(E)$, where
$$
  \pack{g}_{\Del,\del}(E)=\sup\{g(\pi):
  \text{$\pi$ is a $(\Del,\del)$-packing of $E$}\}.
$$
The \emph{$g$-dimensional $\Del$-packing measure} of $E$
is defined by $\pack{g}_{\Del}(E)=\widehat{\pack{g}_{\Del,0}}(E)$.

Let $\boxm{g}_{\Del,0}(E)=\inf_{\del>0}\boxm{g}_{\Del,\del}(E)$, where
$$
  \boxm{g}_{\Del,\del}(E)=\sup\{g(\pi):
  \text{$\pi$ is a uniform $(\Del,\del)$-packing of $E$}\}.
$$
The \emph{$g$-dimensional $\Del$-box measure} of $E$ is defined by
$\boxm{g}_{\Del}(E)=\widehat{\boxm{g}_{\Del,0}}(E)$.
\end{defn}
Clearly $\boxm{g}_{\Del,0}\leq\pack{g}_{\Del,0}$ and $\boxm{g}_{\Del}\leq\pack{g}_{\Del}$.
It is easy to check that the set function $\boxm{g}_{\Del,0}$
can be equivalently defined in terms of capacity:
\begin{equation}\label{capdel}
%  \boxm{g}_{\Del,0}(E)=\limsup_{\substack{\del\in\Del\\ \del\to0}}C_\del(E)\cdot g(\del).
  \boxm{g}_{\Del,0}(E)=\limsup_{\del\in\Del,\del\to0}C_\del(E)\cdot g(\del).
\end{equation}
This equation shows the link to the Hewitt-Stromberg measure.

Here are some elementary facts about the scaled measures.
(i) and (iv) are obvious, (ii) is a consequence
of Lemma~\ref{shift} and (iii) follows from Lemma~\ref{lem:MI}.
\begin{lem}\label{lem:Del}
\begin{enum}
\item $\pack{g}_{\Del,0}$ and $\boxm{g}_{\Del,0}$ are subadditive metric pre-measures,
\item $\pack{g}_{\Del,0}(E)=\pack{g}_{\Del,0}(\clos E)$ for any set $E\subs X$,
  and likewise for $\boxm{g}_{\Del,0}$.
\item $\pack{g}_\Del$ and $\boxm{g}_\Del$ are Borel regular outer measures.
%\item $\boxm{g}_{\Del,0}\leq\pack{g}_{\Del,0}$ and
%   $\boxm{g}_{\Del}\leq\pack{g}_{\Del}$.
\end{enum}
\end{lem}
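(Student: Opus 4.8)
The plan is to establish parts (i) and (ii) directly from the definitions, mimicking the (omitted) arguments for $\pack g_0$ and $\boxm g_0$, and then to read off part (iii) from Lemma~\ref{lem:MI}. For the packing half of (i): monotonicity of $\pack g_{\Del,0}$ and $\pack g_{\Del,0}(\emptyset)=0$ are immediate, a larger set admitting more $(\Del,\del)$-packings and $\emptyset$ admitting only the empty one. For subadditivity, split a $(\Del,\del)$-packing $\pi$ of $A\cup B$ into the sub-packing of those $(x_i,r_i)\in\pi$ with $x_i\in A$ and the one with $x_i\in B\setminus A$; this gives $\pack g_{\Del,\del}(A\cup B)\leq\pack g_{\Del,\del}(A)+\pack g_{\Del,\del}(B)$, and, all three quantities being nondecreasing in $\del$, their infima over $\del$ are limits, so the inequality passes to $\pack g_{\Del,0}$. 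For the metric property suppose $\dist(A,B)=\rho>0$; when $\del<\rho$ the union of a $(\Del,\del)$-packing of $A$ and one of $B$ is a $(\Del,\del)$-packing of $A\cup B$, since centres from different pieces are more than $\rho>\del\geq r_i$ apart, so $\pack g_{\Del,\del}(A\cup B)\geq\pack g_{\Del,\del}(A)+\pack g_{\Del,\del}(B)$ for such $\del$, and again $\del\to0$ finishes.

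For the box half of (i) I would pass to the capacity representation~\eqref{capdel}. Splitting a $\del$-separated finite subset of $A\cup B$ according to its traces on $A$ and on $B$ gives $C_\del(A\cup B)\leq C_\del(A)+C_\del(B)$ for all $\del$, with equality once $\del<\dist(A,B)$; multiplying by $g(\del)$ and using $\limsup(u+v)\leq\limsup u+\limsup v$ yields subadditivity of $\boxm g_{\Del,0}$. The metric inequality for $\boxm g_{\Del,0}$ is the one point that is not pure bookkeeping: since \eqref{capdel} is a $\limsup$ (unlike $\boxm g_0$, which is a $\liminf$, making its metricity trivial), the identity $C_\del(A\cup B)=C_\del(A)+C_\del(B)$ for small $\del$ gives only $\boxm g_{\Del,0}(A\cup B)\geq\limsup_\del C_\del(A)g(\del)+\liminf_\del C_\del(B)g(\del)$, and to reach $\boxm g_{\Del,0}(A)+\boxm g_{\Del,0}(B)$ one must exhibit a single sequence $\del_k\to0$ in $\Del$ along which the $\limsup$ is attained for $A$ and for $B$ at once. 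I expect this interaction between the $\limsup$ and the near-additivity of capacity to be the crux of the lemma.

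For (ii) the inequality $\pack g_{\Del,0}(E)\leq\pack g_{\Del,0}(\clos E)$ is monotonicity. For the converse, fix $\del$ and a $(\Del,\del)$-packing $\pi$ of $\clos E$; as $g(\pi)$ is the supremum of $g$ over the finite sub-packings of $\pi$, it suffices to perturb a finite packing $\{(x_i,r_i):i\in I\}$. Take $\eps>0$ from Lemma~\ref{shift} and pick $x_i'\in E$ with $d(x_i',x_i)<\eps$, keeping the radii $r_i$ fixed; Lemma~\ref{shift} makes $\{(x_i',r_i):i\in I\}$ a packing, still $\del$-fine and $\Del$-valued, now of $E$, and with the same $g$-value. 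Hence $\pack g_{\Del,\del}(\clos E)\leq\pack g_{\Del,\del}(E)$ for every $\del$, so likewise for $\pack g_{\Del,0}$, and equality follows. The perturbation leaves all radii equal, so the same argument gives $\boxm g_{\Del,0}(\clos E)=\boxm g_{\Del,0}(E)$.

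Part (iii) is then formal: by (i) both $\pack g_{\Del,0}$ and $\boxm g_{\Del,0}$ are metric pre-measures, so Lemma~\ref{lem:MI}(i) makes $\pack g_\Del=\widehat{\pack g_{\Del,0}}$ and $\boxm g_\Del=\widehat{\boxm g_{\Del,0}}$ outer measures; by (ii) each pre-measure agrees with its value on the closed (hence Borel) set $\clos E\supseteq E$, so each is Borel-regular, and Lemma~\ref{lem:MI}(ii) transfers Borel-regularity to the Method~I constructions.
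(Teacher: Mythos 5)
Your handling of the packing half of (i), of part (ii) via Lemma~\ref{shift}, and of the deduction of (iii) from Lemma~\ref{lem:MI} is correct and is exactly the route the paper intends (the paper offers no more than ``(i) is obvious, (ii) follows from Lemma~\ref{shift}, (iii) follows from Lemma~\ref{lem:MI}''). But the point you single out as ``the crux'' --- the metric property of $\boxm{g}_{\Del,0}$ --- is precisely the one claim you do not prove, and announcing that you expect it to be the crux is not a proof. Since your argument for (iii) feeds the metric property of the pre-measure into Lemma~\ref{lem:MI}(i), this gap propagates: without it you have established neither the box half of (i) nor that $\boxm{g}_\Del$ is an outer measure.

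The gap is genuine and not a routine verification. For separated $A,B$ and small $\del$ one does have $C_\del(A\cup B)=C_\del(A)+C_\del(B)$, but $\boxm{g}_{\Del,0}$ is by \eqref{capdel} a $\limsup$ over $\del\in\Del$, and $\limsup$ is subadditive, not superadditive. The device that rescues $\pack{g}_{\Del,0}$ --- merge a near-optimal packing of $A$ with one of $B$ --- is unavailable here, because the union of two \emph{uniform} packings with different radii is not uniform, and nothing forces the near-optimal radii for $A$ and for $B$ to coincide. Worse, the obstruction can be realized: take $g(r)=r^{1/2}$, a rapidly decreasing sequence $\ell_k$, and Cantor-type sets $A\subs[0,1]$, $B\subs[2,3]$ built from $N_k$ (resp.\ $N_k'$) well-separated intervals of length $\ell_k$ at stage $k$, arranged so that $C_{\ell_k}(A)=N_k$, $C_{\ell_k}(B)=N_k'$, with $N_k\ell_k^{1/2}$ oscillating between $1$ and $1/k$ and $N_k'\ell_k^{1/2}$ oscillating in the opposite phase (the nesting constraints $N_{k+1}\geq N_k$ and $N_{k+1}\ell_{k+1}\leq N_k\ell_k$ permit this once $\ell_{k+1}$ is small enough). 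With $\Del=\{\ell_k\}$ one gets $\boxm{g}_{\Del,0}(A)=\boxm{g}_{\Del,0}(B)=1$ while $\boxm{g}_{\Del,0}(A\cup B)=\limsup_k(1+1/k)=1<2$. So the inequality $\tau(A\cup B)\geq\tau(A)+\tau(B)$ fails for this pre-measure as literally defined; there is no single sequence $\del_k\to0$ in $\Del$ realizing both $\limsup$'s at once, which is exactly what your sketch would need. (Contrast the lower pre-measure $\lboxm{g}_0=\liminf_\del C_\del(\cdot)g(\del)$, whose metric property is immediate from superadditivity of $\liminf$.) To close the gap you would have to either supply an argument that bypasses the pre-measure and shows directly that $\widehat{\boxm{g}_{\Del,0}}$ is metric, or confront the fact that the box half of the statement needs repair; your proposal does neither.
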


\subsection*{Upper measures}
We now define upper packing and box measures as extreme cases of
corresponding scaled measures. Among all scales, $(0,\infty)$ is the largest one.
The corresponding scaled measures are thus largest among all scaled measures.
\begin{defn}
Let $g$ be a Hausdorff function and $E\subs X$. Let
\begin{align*}
  \upack{g}_0(E)&=\sup_{\Del}\pack{g}_{\Del,0}(E)=\pack{g}_{(0,\infty)}(E),\\
  \uboxm{g}_0(E)&=\sup_{\Del}\boxm{g}_{\Del,0}(E)=\boxm{g}_{(0,\infty)}(E).
\end{align*}
The \emph{$g$-dimensional upper packing and box measures}
of $E$ are defined, respectively, by
$\upack{g}(E)=\widehat{\upack{g}_0}(E)$ and
$\uboxm{g}(E)=\widehat{\uboxm{g}_0}(E)$.
\end{defn}
It is clear that the upper packing measure $\upack{g}$ is nothing but the
packing measure $\pack{g}$ as defined in~\ref{packclassic}.
We defined it just to point out the duality of (upper) packing measure
and lower packing measure defined below. We prefer notation $\upack{g}$ to make clear
distinction between the upper and lower packing measures.
As to $\uboxm{g}_0$, it follows from \eqref{capdel} that
\begin{equation}\label{capac3}
  \uboxm{g}_0(E)=\limsup_{\del\to0}C_\del(E)g(\del)
\end{equation}
and thus $\uboxm{g}$ is via~\eqref{capac2} dual to the Hewitt-Stromberg measure.
The upper box measures $\uboxm s$ appear in many papers and books,
explicitly e.g.~in~\cite{MR1489237} and implicitly e.g.~in~\cite[5.3]{MR1333890}.

Note that $\upack{g}$ and $\uboxm{g}$ and the underlying pre-measures satisfy
Lemma~\ref{lem:Del}.

\subsection*{Lower measures}
Likewise we define lower packing and box measures as the lower extreme cases of
corresponding scaled measures. The situation is more delicate, since there is no
minimal scale.

\begin{defn}
Let $g$ be a Hausdorff function and $E\subs X$. Let
$$
  \lpack{g}_0(E)=\inf_{\Del}\pack{g}_{\Del,0}(E),\qquad
  \lboxm{g}_0(E)=\inf_{\Del}\boxm{g}_{\Del,0}(E),
$$
the infima over all scales.
The \emph{$g$-dimensional lower packing and box measures}
of $E$ are defined, respectively, by
$\lboxm{g}(E)=\widehat{\lboxm{g}_0}(E)$ and
$\uboxm{g}(E)=\widehat{\uboxm{g}_0}(E)$.

Since the upper pre-measures $\upack{g}_0$ and $\uboxm{g}_0$ are subadditive,
\emph{Method D} yields the same measures as \emph{Method I}.
It, however, is not the case of lower measures. That is why we
also define $\dpack{g}(E)=\arrow{\lpack{g}_0}(E)$ and
$\dboxm{g}(E)=\arrow{\lboxm{g}_0}(E)$.
\end{defn}
It follows from \eqref{capdel} that
$\lboxm{g}_0(E)=\liminf_{\del\to0}C_\del(E)g(\del)$.
Thus $\lboxm{g}_0=\boxm{g}_0$ and $\lboxm{g}=\boxm{g}$, i.e.~the
lower box measure is just another name for the Hewitt-Stromberg measure.
The lower packing measure and the two directed pre-measures seem to be new
concepts.

\begin{lem}\label{basic0}
\begin{enum}
\item $\lpack{g}_0$ and $\lboxm{g}_0$ are metric pre-measures,
\item $\lpack{g}_0(E)=\lpack{g}_0(\clos E)$ for any set $E\subs X$,
  and likewise for $\lboxm{g}_0$,
\item $\lpack{g}$ and $\lboxm{g}$ are Borel-regular outer measures,
\item if $\lpack{g}_0(E)<\infty$, then $E$ is totally bounded,
  and likewise for $\lboxm{g}_0$.
\end{enum}
\end{lem}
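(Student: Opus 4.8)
The plan is to verify the four items in turn, using that $\lpack{g}_0$ and $\lboxm{g}_0$ are infima of the subadditive metric pre-measures $\pack{g}_{\Del,0}$ and $\boxm{g}_{\Del,0}$ from Lemma~\ref{lem:Del}(i), together with Lemma~\ref{lem:Del}(ii) and the general machinery of Lemma~\ref{lem:MI}.

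\emph{Item (i).} Monotonicity and vanishing on $\emptyset$ are inherited immediately from the $\pack{g}_{\Del,0}$, so only the metric property needs argument. Fix separated $A,B$ with $\dist(A,B)=\rho>0$. The obstacle is that infima of metric pre-measures need not be metric in general, so one cannot simply pass to the infimum. I would instead fix $\eps>0$ and choose a scale $\Del$ with $\pack{g}_{\Del,0}(A\cup B)\le\lpack{g}_0(A\cup B)+\eps$, then note that for this particular scale $\pack{g}_{\Del,0}(A\cup B)\ge\pack{g}_{\Del,0}(A)+\pack{g}_{\Del,0}(B)\ge\lpack{g}_0(A)+\lpack{g}_0(B)$; letting $\eps\to0$ gives $\lpack{g}_0(A\cup B)\ge\lpack{g}_0(A)+\lpack{g}_0(B)$. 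The same argument works verbatim for $\lboxm{g}_0$.

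\emph{Item (ii).} This is immediate: $\pack{g}_{\Del,0}(E)=\pack{g}_{\Del,0}(\clos E)$ for every scale $\Del$ by Lemma~\ref{lem:Del}(ii), and taking infima over $\Del$ on both sides preserves the equality; likewise for $\boxm{g}_{\Del,0}$.

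\emph{Item (iii).} By (i) the pre-measures $\lpack{g}_0$ and $\lboxm{g}_0$ are metric, so Lemma~\ref{lem:MI}(i) gives that $\lpack{g}=\widehat{\lpack{g}_0}$ and $\lboxm{g}=\widehat{\lboxm{g}_0}$ are outer measures. For Borel-regularity I would use that each $\pack{g}_{\Del,0}$ is (as remarked after Definition~\ref{packclassic} and in Lemma~\ref{lem:Del}) attained or approximated on closed sets, hence via (ii) the value of $\lpack{g}_0$ on an arbitrary set $E$ equals its value on the $G_\delta$ (indeed closed, by (ii)) set $\clos E$; then $\widehat{\lpack{g}_0}(E)=\widehat{\lpack{g}_0}(\clos E)$ realizes the Borel-regular envelope, so $\lpack{g}$ is Borel-regular. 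The same reasoning applies to $\lboxm{g}$.

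\emph{Item (iv).} Suppose $\lpack{g}_0(E)<\infty$. Then there is a scale $\Del$ with $\pack{g}_{\Del,0}(E)<\infty$, hence a $\del_0>0$ with $\pack{g}_{\Del,\del_0}(E)<\infty$. For any $\del\le\del_0$ with $\del\in\Del$, a maximal $\del$-separated subset $F$ of $E$ yields a uniform $(\Del,\del)$-packing $\{(x,\del):x\in F\}$ of $E$, so $\card F\cdot g(\del)\le\pack{g}_{\Del,\del_0}(E)<\infty$, i.e. $\card F<\infty$; maximality of $F$ then shows $E$ is covered by finitely many balls of radius $\del$, so $E$ is totally bounded. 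The argument for $\lboxm{g}_0$ is the same, using uniform packings directly via~\eqref{capdel}. The only mild subtlety—which I would flag—is the case $g(\del)=0$ for small $\del$, excluded here since Hausdorff functions take values in $(0,\infty)$, so $g(\del)>0$ always.

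The main obstacle is item (i): the metric property does not pass through infima automatically, and the fix is precisely to select the near-optimal scale \emph{before} invoking subadditivity on the two pieces. Everything else is routine bookkeeping on top of Lemmas~\ref{lem:MI} and~\ref{lem:Del}.
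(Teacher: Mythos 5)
Your proposal follows the same route as the paper's (very terse) proof and correctly supplies the details for items (i), (ii) and (iv); in particular your handling of the metric property in (i) --- choosing the near-optimal scale for $A\cup B$ \emph{before} splitting, since an infimum of metric pre-measures need not be metric --- is exactly the point the paper leaves as ``straightforward'', and your argument for (iv) is an equivalent reformulation (via packings) of the paper's remark that $\lboxm{g}_0(E)<\infty$ forces $C_\del(E)<\infty$ for all $\del>0$ by monotonicity of $\del\mapsto C_\del(E)$.

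There is, however, one step in item (iii) that would fail. From $\lpack{g}_0(E)=\lpack{g}_0(\clos E)$ you infer $\widehat{\lpack{g}_0}(E)=\widehat{\lpack{g}_0}(\clos E)$; this implication is false, because Method I is computed over countable covers and a cover $\{E_n\}$ of $E$ by sets of small pre-measure does not yield a cover of $\clos E$ (one has $\clos E\subseteq\clos{\bigcup_nE_n}$, but this closure may be strictly larger than $\bigcup_n\clos{E_n}$). Concretely, for $E=\mathbb{Q}\cap[0,1]$ and $g(r)=r$ one gets $\widehat{\lpack{g}_0}(E)=0$ (cover by singletons, each of pre-measure $0$) while $\widehat{\lpack{g}_0}([0,1])>0$, so the two Method I values differ even though the pre-measure values agree. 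The correct argument --- and the one the paper intends by ``(iii) is a consequence of (ii)'' --- is that (ii) shows the \emph{pre-measure} $\lpack{g}_0$ is Borel-regular ($\clos E$ is a Borel superset of $E$ with the same value), and Lemma~\ref{lem:MI}(ii) then gives Borel-regularity of $\widehat{\lpack{g}_0}$ by the standard device of intersecting, over $k$, the unions of Borel hulls of near-optimal covers; combined with Lemma~\ref{lem:MI}(i) and your item (i) this yields (iii). The conclusion is therefore correct and the repair is a one-line appeal to Lemma~\ref{lem:MI}(ii), but the intermediate identity you wrote should be deleted.
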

\begin{proof}
(i) is straightforward, (ii) follows from Lemma~\ref{lem:Del}(ii) and (iii)
is a consequence of (ii). To prove (iv) it is enough to notice that if
$\lboxm{g}_0(E)<\infty$, then by~\eqref{capac2} $C_\del(E)<\infty$ for all $\del>0$.
\end{proof}

\begin{lem}\label{basic}
For any Hausdorff function $g$
\begin{enum}
\item $\lpack{g}_0(E)=
  \inf_{\Del}\sup\{g(\pi):\text{$\pi$ is a $\Del$-valued packing of $E$}\}$,
\item $\lboxm{g}_0(E)=
  \inf_{\Del}\sup\{g(\pi):\text{$\pi$ is a $\Del$-valued uniform packing of $E$}\}$,
\item $\dpack{g}\leq \inf_{\Del}\pack{g}_{\Del}$,
\item $\dboxm{g}= \inf_{\Del}\boxm{g}_{\Del}$.
\end{enum}
\end{lem}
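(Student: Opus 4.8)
The plan is to treat the four parts together, since (i) and (ii) have essentially the same proof, and (iii), (iv) follow the same template with the subadditivity asymmetry that was already flagged after the definition of the lower measures.

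For (i): the inequality $\geq$ is trivial because a $\Del$-valued packing is, in particular, $(\Del,\del)$-valued for $\del=\infty$, so the supremum on the right over $\Del$-valued packings dominates $\pack{g}_{\Del,\del}(E)$ for every $\del$, hence dominates $\pack{g}_{\Del,0}(E)=\inf_\del\pack{g}_{\Del,\del}(E)$; taking $\inf_\Del$ preserves this. For $\leq$, the idea is a truncation-of-the-scale trick: given an arbitrary scale $\Del$ and $\del>0$, put $\Del'=\Del\cap(0,\del]$, which is again a scale since $0\in\clos\Del$ forces $0\in\clos\Del'$. Then every $\Del'$-valued packing is automatically $\del$-fine, so
$$
  \sup\{g(\pi):\text{$\pi$ is a $\Del'$-valued packing of $E$}\}
  = \pack{g}_{\Del',\del}(E)\geq\pack{g}_{\Del',0}(E)\geq\lpack{g}_0(E).
$$
Since $\del$ is arbitrary, the infimum over scales of the displayed right-hand-side quantity is at least $\lpack{g}_0(E)$; but that infimum is exactly the right-hand side of (i) (restricting to scales of the form $\Del\cap(0,\del]$ can only lower it, so actually gives equality in the other direction too). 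Part (ii) is verbatim the same argument with ``packing'' replaced by ``uniform packing'' throughout, using $\boxm{g}_{\Del,\del}$ in place of $\pack{g}_{\Del,\del}$.

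For (iv): by definition $\dboxm{g}=\arrow{\lboxm{g}_0}$ and $\boxm{g}_\Del=\widehat{\boxm{g}_{\Del,0}}$. For $\leq$, fix a scale $\Del$ and an increasing cover $E_n\upto E$; since $\lboxm{g}_0\leq\boxm{g}_{\Del,0}$ we get $\sup_n\lboxm{g}_0(E_n)\leq\sup_n\boxm{g}_{\Del,0}(E_n)$, and because $\boxm{g}_{\Del,0}$ is subadditive (Lemma \ref{lem:Del}(i)) Method D applied to it agrees with Method I by Lemma \ref{lem:MI}(iii), so $\sup_n\boxm{g}_{\Del,0}(E_n)=\boxm{g}_\Del(E)$ for a suitable cover; taking the infimum over covers and then over $\Del$ gives $\dboxm{g}(E)\leq\inf_\Del\boxm{g}_\Del(E)$. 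For $\geq$, start from an increasing cover $E_n\upto E$ nearly realizing $\dboxm{g}(E)$; for each $n$ choose, by (ii) or directly by definition of the infimum over scales, a scale $\Del_n$ with $\boxm{g}_{\Del_n,0}(E_n)\leq\lboxm{g}_0(E_n)+2^{-n}\eps$. The obstacle — and the only real point in the lemma — is that these $\Del_n$ need not be comparable, so one cannot naively pass to a single scale. The remedy is to diagonalize: choose a decreasing sequence $\del_n\downarrow 0$ and set $\Del^*=\bigcup_n(\Del_n\cap(\del_{n+1},\del_n])$, which is a scale. On the piece of $\Del^*$ below $\del_n$ the capacity counting is governed by the tails of $\Del_{n},\Del_{n+1},\dots$, so one arranges (choosing the $\del_n$ small enough relative to the already-fixed near-optimal witnesses for $E_n$, using that $\boxm{g}_{\Del_n,0}$ is an $\inf$ over $\del$) that $\boxm{g}_{\Del^*,0}(E_n)\leq\lboxm{g}_0(E_n)+$ (controlled error) for every $n$ simultaneously; then $\boxm{g}_{\Del^*}(E)\leq\sum_n\boxm{g}_{\Del^*,0}(E_n)$ is close to $\sum_n\lboxm{g}_0(E_n)$ — wait, that is Method I, not Method D. So instead one uses the cover by the $E_n$ themselves in the Method-I infimum only up to reindexing: since the $E_n$ are increasing, $\widehat{\boxm{g}_{\Del^*,0}}(E)\leq\sup_n\boxm{g}_{\Del^*,0}(E_n)$ by subadditivity, and this is $\leq\sup_n\lboxm{g}_0(E_n)+\eps=\dboxm{g}(E)+\eps$ by the choice of the cover. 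Letting $\eps\to0$ finishes (iv).

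For (iii) only ``$\leq$'' is claimed, and it follows from exactly the first (easy) half of the (iv) argument — $\lpack{g}_0\leq\pack{g}_{\Del,0}$, $\pack{g}_{\Del,0}$ is subadditive so Method D = Method I on it, hence $\dpack{g}\leq\pack{g}_\Del$ for each $\Del$, and one takes the infimum. Equality is not asserted here because the reverse diagonalization fails for packings: a $\Del^*$-valued \emph{packing} may use radii from many different $\Del_n$ at once (packings, unlike the uniform capacity count, are not ``localized at a single scale''), so the tail-control step that worked for $\boxm{}$ has no analogue, and this genuine asymmetry is precisely why $\dpack{g}$ and $\inf_\Del\pack{g}_\Del$ need not coincide.
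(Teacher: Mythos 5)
Your argument is correct and follows essentially the same route as the paper: truncated scales $\Del\cap(0,\del]$ for (i)--(ii), subadditivity of $\pack{g}_{\Del,0}$ together with Lemma~\ref{lem:MI}(iii) for (iii) and the easy half of (iv), and a single diagonal scale assembled from near-optimal scales $\Del_n$ attached to the increasing sets $E_n$ for the hard half of (iv). The paper's diagonalization is lighter --- it keeps just one radius $r_n\in\Del_n$ decreasing to $0$ and invokes $C_{r_m}(E_n)\leq C_{r_m}(E_m)$ for $m\geq n$ --- and your intermediate claim $\boxm{g}_{\Del^*,0}(E_n)\leq\lboxm{g}_0(E_n)+\eps$ is slightly stronger than what your construction actually delivers (one only gets the bound with $\sup_{m\geq n}\lboxm{g}_0(E_m)$ in place of $\lboxm{g}_0(E_n)$), but the weaker bound is all your final step uses, so the proof stands.
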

\begin{proof}
(i) For each scale $\Del$ denote
$S_\Del=\sup\{g(\pi):\text{$\pi$ is a $\Del$-valued packing of $E$}\}$ and
$S=\inf_\Del S_\Del$. Note that if $\Del$ is a scale and $\del>0$, then
$\Del\cap(0,\del)$ is also a scale.
By the definition, $\pack{g}_{\Del,0}(E)=\inf_{\del>0}S_{\Del\cap(0,\del)}\geq S$,
which in turn yields $\lpack{g}_0(E)\geq S$. The reverse inequality is obvious.
(ii) is proved in the same manner.

(iii) Clearly $\lpack g_0\leq\pack g_{\Del,0}$. Hence
$\dpack g\leq\arrow{\pack g_{\Del,0}}$ and since Lemma~\ref{lem:MI}(iii)
yields $\arrow{\pack g_{\Del,0}}=\pack g_\Del$, we are done.

(iv) $\dboxm{g}\leq\inf_{\Del}\boxm{g}_{\Del}$ is proved the same way as (iii).
To prove the reverse inequality, let $E\subs X$ and $s>\dboxm{g}(E)$. There is
$E_n\upto E$ such that $\lboxm g_0(E_n)<s$ for all $n$, i.e.~there are scales $\Del_n$
such that $C_{E_n}(r)g(r)<r$ for all $n$ and $r\in\Del_n$.
Choose $r_n\in\Del_n$ so that the resulting sequence decreases to zero
and let $\Del=\{r_n:n\in\nset\}$.
Proving that $\boxm g_{\Del,0}(E_n)\leq s$ for all $n$,
and thus $\boxm g_\Del(E)\leq s$, is straightforward.
\end{proof}
We do not know if the inequality (iii) can be reversed.

\subsection*{Comparison}
The inequalities $\lpack{g}\leq\dpack{g}\leq\lpack{g}_0$ and
$\lboxm{g}\leq\dboxm{g}\leq\lboxm{g}_0$ are trivial. As follows from Example~\ref{ex:3x},
none of these four inequalities can be reversed.
It is also clear that $\lboxm{g}_0\leq\lpack{g}_0$, $\lboxm{g}\leq\lpack{g}$ and
$\dboxm{g}\leq\dpack{g}$, but we do not know if they can be reversed.
We only know that $\lboxm{g}_0$ and $\lpack{g}_0$ have the same null sets.
\begin{prop}\label{lem:00}
For any set $E\subs X$,
$\lpack{g}_0(E)=0$ if and only if $\lboxm{g}_0(E)=0$.
\end{prop}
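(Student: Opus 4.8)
The plan is this. One implication is immediate: since $\lboxm{g}_0\leq\lpack{g}_0$ (see the Comparison paragraph), $\lpack{g}_0(E)=0$ trivially forces $\lboxm{g}_0(E)=0$. The substance of the proposition is the reverse implication, and the idea is to convert the vanishing of $\liminf_{\del\to0}C_\del(E)g(\del)$ into a single scale $\Del$ on which \emph{every} $\Del$-valued packing of $E$ — not merely the uniform ones — is arbitrarily small.

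So suppose $\lboxm{g}_0(E)=0$. By~\eqref{capdel} (equivalently, by the remark identifying $\lboxm{g}_0$ with the Hewitt--Stromberg pre-measure $\boxm{g}_0$) this means $\liminf_{\del\to0}C_\del(E)g(\del)=0$. Fix $\eps>0$. Using the definition of $\liminf$, I would choose recursively a strictly decreasing sequence $\del_1>\del_2>\dots$ with $\del_k\to0$ and
$$
  C_{\del_k}(E)\,g(\del_k)<2^{-k}\eps\qquad(k\geq1).
$$
In particular $C_{\del_k}(E)<\infty$ for every $k$. Put $\Del=\{\del_k:k\geq1\}$; since $\del_k\to0$, this is a scale. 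The key step is to show that $g(\pi)<\eps$ for every $\Del$-valued packing $\pi=\{(x_i,r_i):i\in I\}$ of $E$. Partition $I$ by radius: for $k\geq1$ set $I_k=\{i\in I:r_i=\del_k\}$. If $i,j\in I_k$ are distinct, then $x_i\notin B(x_j,r_j)$ gives $d(x_i,x_j)>\del_k$, so $\{x_i:i\in I_k\}\subs E$ has $\gap>\del_k$ and hence $\card{I_k}\leq C_{\del_k}(E)$. Therefore
$$
  g(\pi)=\sum_{k\geq1}\card{I_k}\,g(\del_k)\leq\sum_{k\geq1}C_{\del_k}(E)\,g(\del_k)<\eps\sum_{k\geq1}2^{-k}=\eps.
$$
By Lemma~\ref{basic}(i), $\lpack{g}_0(E)\leq\sup\{g(\pi):\pi\text{ a }\Del\text{-valued packing of }E\}\leq\eps$; letting $\eps\to0$ gives $\lpack{g}_0(E)=0$.

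I do not anticipate a real obstacle, as the argument is short. The only spot demanding a little care is the packing estimate: a $\Del$-valued packing may freely mix the radii $\del_k$, so one cannot bound it by a single capacity and must instead split the index set by radius and apply the capacity bound term by term, the geometric weights $2^{-k}$ then guaranteeing convergence of the (a priori possibly infinite) sum. One should also build an explicit safeguard (say $\del_k<\del_{k-1}/2$) into the recursion to ensure $\del_k\to0$, and note that the trivial case $E=\emptyset$ is covered since then both pre-measures vanish.
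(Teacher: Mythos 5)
Your proof is correct and follows essentially the same route as the paper's: both select a sequence $\del_k\downarrow0$ along which $C_{\del_k}(E)g(\del_k)$ is summably small, take it as the scale, and bound an arbitrary $\Del$-valued packing by partitioning it according to radius and applying the capacity bound term by term. The only cosmetic difference is that the paper fixes $C_{r_n}(E)g(r_n)\leq 2^{-n}$ once and passes to tail scales $\Del_m$, whereas you thread an $\eps$ through the construction; the substance is identical.
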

\begin{proof}
The forward implication is obvious.
To prove the backward one assume $\lboxm{g}_0(E)=0$.
Then there is a sequence $r_n\downarrow0$ such that
$C_{r_n}(E)g(r_n)\leq 2^{-n}$.
For $m\in\nset$ define a scale $\Del_m=\{r_n:n>m\}$.
If $\pi$ is a $\Del_m$-valued packing, then
$$
  g(\pi)=\sum_{n>m}\sum\{g(r_n):(x,r_n)\in\pi\}
  \leq\sum_{n>m}C_{r_n}(E)g(r_n)
  \leq\sum_{n>m}2^{-n}=2^{-m}.
$$
Therefore $\lpack{g}_{\Del_m,0}(E)\leq2^{-m}$ and consequently
$\lpack{g}_0(E)\leq\inf_{m\in\nset}2^{-m}=0$.
\end{proof}
This proposition is enough to show that the measures $\lboxm{g}$ and $\lpack{g}$
are close to each other:
\begin{prop}\label{comp2}
The following are equivalent:
\begin{enum}
\item there is a countable cover $\{E_n\}$ of $E$ such that
$\lboxm{g}_0(E_n)=0$ for all $n$,
\item there is $h\prec g$ such that $\lboxm{h}(E)=0$,
\item there is $h\prec g$ such that $\lpack{h}(E)=0$.
\end{enum}
\end{prop}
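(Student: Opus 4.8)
The plan is to prove the chain of implications $(ii)\Rightarrow(i)\Rightarrow(iii)$ together with the trivial $(iii)\Rightarrow(i)$ type step folded in, but actually the most economical route is $(i)\Rightarrow(ii)$, $(ii)\Rightarrow(iii)$ (via Proposition~\ref{lem:00}), and $(iii)\Rightarrow(i)$. Let me sketch each direction.

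\emph{$(i)\Rightarrow(ii)$.} Suppose $\{E_n\}$ covers $E$ with $\lboxm{g}_0(E_n)=0$ for each $n$. For each $n$, $\lboxm{g}_0(E_n)=\liminf_{\del\to0}C_\del(E_n)g(\del)=0$ gives a sequence $\del^{(n)}_k\downarrow 0$ with $C_{\del^{(n)}_k}(E_n)g(\del^{(n)}_k)\leq 2^{-k}$. The idea is to build a single Hausdorff function $h\prec g$ that simultaneously certifies $\lboxm{h}_0(E_n)$ small for every $n$. Concretely, one diagonalizes: choose a strictly decreasing sequence $\rho_j\downarrow 0$ drawn from the $\del^{(n)}_k$'s so that near each $\rho_j$ the capacity $C_{\rho_j}(E_n)$ is controlled for all $n\leq j$, and define $h$ to be a nondecreasing function with $h(\rho_j)$ chosen small enough (say $h(\rho_j)\leq g(\rho_j)/j$ and also $h(\rho_j)C_{\rho_j}(E_n)\leq 2^{-j}$ for all $n\leq j$) and interpolated to stay nondecreasing and stay $\leq g$. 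Then $h\prec g$ because $h(\rho_j)/g(\rho_j)\to0$ and $h\leq g$ forces $\varlimsup_{r\to0}h(r)/g(r)=0$ once one is careful about the interpolation (this is the routine but slightly delicate point — one must control $h/g$ on the whole interval, not just at the sample points, which is handled by making $h$ constant on $[\rho_{j+1},\rho_j)$ equal to $h(\rho_{j+1})$ so that $h(r)/g(r)\leq h(\rho_{j+1})/g(\rho_{j+1})$ there). With such $h$, $\lboxm{h}_0(E_n)\leq\liminf_j C_{\rho_j}(E_n)h(\rho_j)=0$, and since $\lboxm{h}$ is an outer measure (countably subadditive) with $\lboxm{h}(E_n)\leq\lboxm{h}_0(E_n)=0$, countable subadditivity gives $\lboxm{h}(E)=0$.

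\emph{$(ii)\Rightarrow(iii)$.} Assume $\lboxm{h}(E)=0$ for some $h\prec g$. Since $\lboxm{h}=\widehat{\lboxm{h}_0}$ and this is a Borel-regular outer measure, $\lboxm{h}(E)=0$ yields a countable cover $\{E_n\}$ of $E$ with $\lboxm{h}_0(E_n)=0$ for all $n$ (the Method~I infimum being zero, pick a cover with $\sum_n\lboxm{h}_0(E_n)$ arbitrarily small, hence each summand zero after refining — or more directly, $0=\lboxm h_0$ is attained on a cover in the limit, then intersect). Apply Proposition~\ref{lem:00} with the Hausdorff function $h$: $\lboxm{h}_0(E_n)=0$ implies $\lpack{h}_0(E_n)=0$ for each $n$. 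Then $\lpack{h}(E_n)\leq\lpack{h}_0(E_n)=0$, and by countable subadditivity of the outer measure $\lpack{h}$ (Lemma~\ref{basic0}(iii)), $\lpack{h}(E)=0$. This is with the \emph{same} $h$, so (iii) holds.

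\emph{$(iii)\Rightarrow(i)$.} Assume $\lpack{h}(E)=0$ for some $h\prec g$. As above, Borel-regularity of $\lpack{h}=\widehat{\lpack{h}_0}$ gives a countable cover $\{E_n\}$ of $E$ with $\lpack{h}_0(E_n)=0$ for all $n$. Since $\lboxm{h}_0\leq\lpack{h}_0$, also $\lboxm{h}_0(E_n)=0$. Now I need to upgrade from "$\lboxm{h}_0(E_n)=0$ with $h\prec g$" to "$\lboxm{g}_0(E'_m)=0$ on some cover", i.e.\ I need to trade the better Hausdorff function $h$ for a finer cover with respect to the original $g$. The mechanism: $h\prec g$ means $h(r)/g(r)\to0$, so fix a sequence $\del_k\downarrow 0$ with $g(\del_k)\leq k\,h(\del_k)$ for all $k$ (possible since $h(\del)/g(\del)\to0$). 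For fixed $n$, $\lboxm{h}_0(E_n)=0$ gives $C_{\del_k}(E_n)h(\del_k)\to0$ along a subsequence; but $g(\del_k)\leq k h(\del_k)$ only loses a factor $k$, which is not enough by itself. So instead one must genuinely split each $E_n$ into countably many pieces. The standard trick is: for each $n$ and each $m$, if $\lboxm{h}_0(E_n)=0$ then along a sequence $\del\to0$ one has $C_\del(E_n)h(\del)$ small; cover $E_n$ by finitely many sets of diameter $<\del$ (total boundedness from Lemma~\ref{basic0}(iv) applied suitably) — actually the cleaner approach is to observe $\lboxm{h}_0(E_n)=0\Rightarrow\lboxm{h}(E_n)=0$, then by the equivalence $(i)\Leftrightarrow(ii)$ already proved (applied with $g$ replaced by $h$... no). Let me rather use: from $\lpack h_0(E_n)=0$ and Proposition~\ref{lem:00}-type reasoning directly. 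Actually the cleanest: $\lboxm h_0(E_n)=0$ means for every $\eps$, there is $\del$ with $C_\del(E_n)h(\del)<\eps$. Pick $\del_k\downarrow0$ with $C_{\del_k}(E_n)h(\del_k)<4^{-k}$ and $g(\del_k)\leq 2^k h(\del_k)$ (combine the two selection processes — possible because both are ``for arbitrarily small $\del$'' conditions, intersect the two sets of admissible $\del$'s). Then $C_{\del_k}(E_n)g(\del_k)\leq 2^k\cdot 4^{-k}=2^{-k}\to0$, so $\lboxm{g}_0(E_n)=0$ directly — \emph{no need to refine the cover at all}. So the main obstacle dissolves: simultaneous small-$\del$ selection handles it. The cover $\{E_n\}$ itself works for (i).

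The step I expect to be the genuine obstacle is the construction of the single Hausdorff function $h\prec g$ in $(i)\Rightarrow(ii)$ that works for all members of the cover simultaneously — the diagonalization is routine in spirit but requires care to (a) keep $h$ nondecreasing, (b) ensure $h\leq g$ and $h(\rho_j)/g(\rho_j)\to0$ so that $h\prec g$ holds on the full interval near $0$ (not merely on the sampled points), and (c) keep the capacities of all $E_n$ controlled at the chosen scales; all three are simultaneously arranged by the standard ``constant on dyadic-type intervals, interpolate down'' device. The rest is bookkeeping with countable subadditivity and Borel-regularity of the Method~I measures, plus one application of Proposition~\ref{lem:00}.
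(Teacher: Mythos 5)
Your route (a self-contained cycle of implications) differs from the paper's, which simply cites Haase's Proposition~6 for (i)$\Leftrightarrow$(ii), notes that (iii)$\Rightarrow$(ii) is immediate from $\lboxm{h}\leq\lpack{h}$, and adapts Haase's argument via Proposition~\ref{lem:00} to get (i)$\Rightarrow$(iii). A self-contained proof would be welcome, but yours has two genuine gaps.

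First, the direction of $\prec$. You read $h\prec g$ as $h/g\to0$ and accordingly your (i)$\Rightarrow$(ii) manufactures a \emph{tiny} $h$. Under that reading the proposition is false: take $E=[0,1]$, $g(r)=r^{1/2}$, and let $h(r)=4^{-j-1}$ for $r\in[2^{-j-1},2^{-j})$. Then $h$ is nondecreasing, $h/g\to0$, and $C_{2^{-j}}(E)\,h(2^{-j})\approx 2^{-j}\to0$, so $\lboxm{h}_0(E)=0$ and (ii) holds; yet (i) fails, since by Baire category some $\clos{E_n}$ contains an interval and hence $\lboxm{g}_0(E_n)=\lboxm{g}_0(\clos{E_n})=\infty$. (Tellingly, your construction never uses the hypothesis $\lboxm{g}_0(E_n)=0$, only that $C_\del(E_n)<\infty$.) The statement is only true with the opposite convention, $g/h\to0$ ($h$ the larger gauge), which is also what the statement and proof of Proposition~\ref{comp4} require. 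Your (iii)$\Rightarrow$(i) silently switches to that convention: choosing $\del_k\downarrow0$ with $g(\del_k)\leq 2^k h(\del_k)$ is \emph{impossible} when $h/g\to0$ (then $g/h\to\infty$); it is only ``possible since $h/g\to0$'' if you actually meant $g/h\to0$.

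Second, cover extraction. Twice you pass from $\lboxm{h}(E)=0$ (resp.\ $\lpack{h}(E)=0$) to a countable cover with $\lboxm{h}_0(E_n)=0$. Method~I only yields covers with $\sum_n\lboxm{h}_0(E_n)<\eps$, i.e.\ pieces of finite pre-measure, not null pieces (compare: a Lebesgue-null set need not be a countable union of singletons). This is exactly where the gap between $h$ and $g$ earns its keep: with the correct convention $g/h\to0$, finiteness of $\lboxm{h}_0(E_n)$ already gives $\lboxm{g}_0(E_n)=0$, since along a sequence with $C_{\del_k}(E_n)h(\del_k)\leq M$ one has $C_{\del_k}(E_n)g(\del_k)\leq M\,g(\del_k)/h(\del_k)\to0$. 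After fixing the convention and replacing ``null pieces'' by ``finite pieces,'' your (iii)$\Rightarrow$(i) is essentially right; (ii)$\Rightarrow$(iii) then needs an intermediate gauge such as $h'=\sqrt{gh}$ together with Proposition~\ref{lem:00}; and (i)$\Rightarrow$(ii) requires the genuinely nontrivial Haase-style diagonalization producing a \emph{large} $h$ with $g/h\to0$ slowly enough that $\liminf_\del C_\del(E_n)h(\del)=0$ for every $n$ --- the opposite of the construction you wrote.
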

\begin{proof}
(i)$\Leftrightarrow$(ii) is proved in~\cite[Prop.~6]{MR827889}.
The proof therein can be easily adapted to show that, via Proposition~\ref{lem:00},
(i)$\Rightarrow$(iii), and (iii)$\Rightarrow$(ii) is obvious.
\end{proof}
The directed pre-measures are also close:
\begin{prop}\label{comp3}
The following are equivalent:
\begin{enum}
\item there is $E_n\upto E$ such that
$\lboxm{g}_0(E_n)=0$ for all $n$,
\item there is $E_n\upto E$ and a sequence $r_n\downarrow0$ such that
$C_{r_n}(E_n)g(r_n)\to0$,
\item there is $h\prec g$ such that $\dboxm{h}(E)=0$,
\item there is $h\prec g$ such that $\dpack{h}(E)=0$,
\item there is $h\prec g$ and a scale $\Del$ such that $\pack{h}_\Del(E)=0$.
\end{enum}
\end{prop}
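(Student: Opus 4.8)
The plan is to prove the chain of equivalences by establishing (i)$\Rightarrow$(ii)$\Rightarrow$(v)$\Rightarrow$(iv)$\Rightarrow$(iii)$\Rightarrow$(i), with the two easy conversions between box- and packing-type statements handled by the methods already available: Proposition~\ref{lem:00} relates $\lboxm{h}_0$ and $\lpack{h}_0$ null sets, and Lemma~\ref{basic}(iii),(iv) relate the directed pre-measures to the infima of scaled measures.

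First, for (i)$\Rightarrow$(ii), suppose $E_n\upto E$ with $\lboxm{g}_0(E_n)=0$ for all $n$. Unravelling the definition $\lboxm{g}_0(E_n)=\inf_{\Del}\limsup_{\del\in\Del,\del\to0}C_\del(E_n)g(\del)=\liminf_{\del\to0}C_\del(E_n)g(\del)$, for each $n$ pick $\del^{(n)}_k\downarrow 0$ (as $k\to\infty$) with $C_{\del^{(n)}_k}(E_n)g(\del^{(n)}_k)\to 0$. A diagonal choice produces a single sequence $r_n\downarrow 0$ with $C_{r_n}(E_n)g(r_n)\to 0$: at stage $n$ we use that $E_n$ is fixed and choose $r_n$ from the $n$-th sequence small enough to beat both $r_{n-1}$ and $2^{-n}$. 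This is precisely statement (ii), so (i)$\Rightarrow$(ii) (and trivially (ii)$\Rightarrow$(i) since any such sequence witnesses $\lboxm{g}_0(E_n)=0$).

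Next, (ii)$\Rightarrow$(v): given $E_n\upto E$ and $r_n\downarrow0$ with $c_n:=C_{r_n}(E_n)g(r_n)\to 0$, define $h$ by interpolating so that $h(r_n)=g(r_n)$ but $h(r)/g(r)\to 0$ — e.g.\ set $h$ constant equal to $g(r_n)$ on $(r_{n+1},r_n]$ is \emph{not} enough since we need $h\prec g$; instead choose a subsequence along which $c_n$ decreases fast and define $h(r_n)=\eps_n g(r_n)$ with $\eps_n\downarrow 0$ slowly enough that still $C_{r_n}(E_n)h(r_n)=\eps_n c_n\to 0$ summably, while $\eps_n\to0$ forces $\varlimsup h(r)/g(r)=0$ once $h$ is extended monotonically. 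Put $\Del=\{r_n:n\in\nset\}$. Then for any $\Del$-valued uniform packing $\pi$ of any $E_m$, all radii equal some $r_n$ with $n$ large, and $h(\pi)\leq C_{r_n}(E_m)h(r_n)\leq C_{r_n}(E_n)h(r_n)$ for $n\geq m$, which is small; summing the Method~I cover by the $E_n$'s gives $\pack{h}_\Del(E)\leq\boxm{h}_\Del(E)\cdot(\text{const})$ — more carefully, $\boxm{h}_{\Del}(E)=0$, hence also $\pack{h}_{\Del}(E)=0$ by first replacing $h$ via Proposition~\ref{lem:00}-type reasoning applied scale-by-scale, or directly since along $\Del$ a packing decomposes over radii as in the proof of Proposition~\ref{lem:00}. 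This yields (v). The implications (v)$\Rightarrow$(iv) and (v)$\Rightarrow$(iii) are then immediate from Lemma~\ref{basic}(iii),(iv), which give $\dpack{h}\leq\pack{h}_\Del$ and $\dboxm{h}\leq\boxm{h}_\Del\leq\pack{h}_\Del$; and (iv)$\Rightarrow$(iii) is trivial since $\dboxm{h}\leq\dpack{h}$.

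Finally, (iii)$\Rightarrow$(i): if $\dboxm{h}(E)=0$ for some $h\prec g$, then by definition of $\arrow{\lboxm{h}_0}$ there is $E_n\upto E$ with $\lboxm{h}_0(E_n)\to 0$, so we may pass to a subsequence (still increasing with union $E$) along which $\lboxm{h}_0(E_n)<2^{-n}$. Using $h\prec g$, i.e.\ $h(r)\leq g(r)$ for $r$ small and $h(r)/g(r)\to0$, one checks $\lboxm{g}_0(E_n)=0$: for any $\eps>0$ the relation $\liminf_{\del\to0}C_\del(E_n)h(\del)<2^{-n}$ supplies arbitrarily small $\del$ with $C_\del(E_n)h(\del)<2^{-n}$, and since $h(\del)\geq \del$-independent positive multiple of $g(\del)$ fails in general — instead we argue $\lboxm{h}_0(E_n)=0$ for each \emph{fixed} $n$ after a further diagonalization over $n$, exactly mirroring the (i)$\Leftrightarrow$(ii) step, and then $h\prec g$ is not even needed here: $\lboxm{h}_0(E_n)=0$ already gives a witnessing sequence, and comparing with $g$ via $\liminf C_\del(E_n)g(\del)\leq \liminf C_\del(E_n)h(\del)\cdot\sup_{\del}g(\del)/h(\del)$ is the wrong direction, so the correct route is: $h\prec g$ means $h\leq g$ near $0$ is \emph{not} automatic, but $\lboxm{h}_0(E_n)=0$ together with $h(\del)\to$ comparable is handled by Proposition~\ref{comp2}(i)$\Leftrightarrow$(ii)'s proof, which is cited from~\cite{MR827889}; the same argument, applied with increasing covers, yields (i). The main obstacle is exactly this last point — making the passage from a single function $h\prec g$ back to $\lboxm{g}_0(E_n)=0$ rigorous — but it is the directed analogue of Proposition~\ref{comp2}, so I expect to reuse that proof nearly verbatim, replacing "countable cover" by "increasing cover" throughout.
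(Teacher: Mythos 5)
Your chain (i)$\Rightarrow$(ii)$\Rightarrow$(v)$\Rightarrow$(iv)$\Rightarrow$(iii)$\Rightarrow$(i) is exactly the paper's, and the first four links are essentially right: the diagonal choice for (i)$\Rightarrow$(ii), the construction of $h$ and $\Del=\{r_n\}$ for (ii)$\Rightarrow$(v) with the radius-by-radius decomposition of a $\Del$-valued packing as in the proof of Proposition~\ref{lem:00} (do delete the false intermediate claim $\pack{h}_\Del(E)\leq\boxm{h}_\Del(E)\cdot\mathrm{const}$ and keep only the direct estimate $h(\pi)\leq\sum_{n>N}C_{r_n}(E_n)h(r_n)$, arranging summability by passing to a subsequence), and Lemma~\ref{basic}(iii),(iv) for (v)$\Rightarrow$(iv)$\Rightarrow$(iii).

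The genuine gap is (iii)$\Rightarrow$(i), which you explicitly leave unfinished and propose to patch by ``reusing the proof of Proposition~\ref{comp2} verbatim'' --- but that proof is itself outsourced to Haase's paper and is not the intended tool. The paper closes this link in one line by citing Proposition~\ref{comp4}(i): if $\dboxm{h}(E)=0$, take any $E_n\upto E$ with $\sup_n\lboxm{h}_0(E_n)\leq 1$ (only \emph{finiteness} is needed, not a sequence tending to $0$, so no diagonalization over sequences of increasing covers is required); then $\lboxm{h}_0(E_n)<\infty$ together with the relation between $h$ and $g$ forces $\lboxm{g}_0(E_n)=0$ for every $n$, which is (i). The two-line computation behind this is: pick $\del_k\to0$ with $C_{\del_k}(E_n)h(\del_k)\leq 2$ and write $C_{\del_k}(E_n)g(\del_k)=C_{\del_k}(E_n)h(\del_k)\cdot g(\del_k)/h(\del_k)\to0$. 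Your difficulty here is real but diagnosable: it comes entirely from the orientation of $\prec$. For the argument (and indeed for the proposition) to work, the relation ``$h\prec g$'' in items (ii)--(v) must be read so that $g(\del)/h(\del)\to0$, i.e.\ $h$ is the \emph{larger} gauge --- this is how Proposition~\ref{comp4} is actually used, and under the opposite reading (the one you adopt, $h(r)/g(r)\to0$) the implication (iii)$\Rightarrow$(i) is simply false: for a self-similar Cantor set $E$ of dimension $\tfrac12$ one has $\dboxm{3/4}(E)=0$ and $r^{3/4}/r^{1/2}\to0$, yet no increasing cover of $E$ has $\lboxm{1/2}_0(E_n)=0$. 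So the missing ingredient is not a directed analogue of Proposition~\ref{comp2} but the elementary comparison recorded in Proposition~\ref{comp4}(i), applied with the roles of the two gauges sorted out.
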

\begin{proof}
(i)$\Rightarrow$(ii): By virtue of~\eqref{capdel} there is, for each $n$,
a sequence $r_i^n\downarrow0$ such that $r_n^n\to0$ and
$\lim_{i\to\infty}C_{r_i^n}(E_n)g(r_i^n)=0$.
It is enough set $r_n=r_n^n$.

(ii)$\Rightarrow$(v):
Since $\lim_{n\to\infty}C_{r_n}(E_n)g(r_n)=0$, there is clearly $h\prec g$ such that
$\lim_{n\to\infty}C_{r_n}(E_n)h(r_n)=0$.
Letting $\Del=\{r_n:n\in\nset\}$ we have $\pack{h}_\Del(E)=0$.

(v)$\Rightarrow$(iv)$\Rightarrow$(iii) are obvious and (iii)$\Rightarrow$(i)
follows from~\ref{comp4}(i) below.
\end{proof}
\begin{prop}\label{comp4}
If $g\prec h$, then for any set $E\subs X$
\begin{enum}
\item $\lboxm{g}_0(E)<\infty\,\,\Rightarrow\lpack{h}_0(E)=0$,
\item $\lboxm{g}(E)<\infty\,\,\Rightarrow\lpack{h}(E)=0$,
\item $\dboxm{g}(E)<\infty\Rightarrow\dpack{h}(E)=0$
and there is a scale $\Del$ such that $\pack{h}_\Del(E)=0$.
\end{enum}
\end{prop}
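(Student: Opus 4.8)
The plan is to establish the three implications in turn, working from the definition $g\prec h$, i.e.\ $\varlimsup_{r\to0}g(r)/h(r)=0$, which gives, for any $\eps>0$, a scale $\Del_\eps$ of radii on which $h(r)\geq g(r)/\eps$.

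For (i), suppose $\lboxm{g}_0(E)=:c<\infty$. By the capacity formula $\lboxm{g}_0(E)=\liminf_{\del\to0}C_\del(E)g(\del)$, there is a sequence $\del_n\downarrow0$ with $C_{\del_n}(E)g(\del_n)\leq c+1$ for all $n$. For a fixed $\eps>0$, pass to a subsequence $\seq{\del_{n_k}}$ along which also $h(\del_{n_k})\geq g(\del_{n_k})/\eps$, and put $\Del=\{\del_{n_k}:k\in\nset\}$. Then, exactly as in the proof of Proposition~\ref{lem:00}, for any $\Del$-valued packing $\pi$ of $E$ one estimates $g(\pi)\leq\sum_k C_{\del_{n_k}}(E)g(\del_{n_k})$; but this sum need not converge, so instead I argue as in Lemma~\ref{basic}(i): it suffices to bound, for each single radius $\del_{n_k}$, the contribution of a uniform $\{\del_{n_k}\}$-valued packing. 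Cleaner still: use Lemma~\ref{basic}(i), so $\lpack{h}_0(E)=\inf_\Del\sup\{h(\pi):\pi\text{ a }\Del\text{-valued packing of }E\}$, and for a \emph{uniform} radius $\del_{n_k}$ a $\Del$-valued packing of $E$ with that radius has $h(\pi)\leq C_{\del_{n_k}}(E)h(\del_{n_k})\leq\eps\,C_{\del_{n_k}}(E)g(\del_{n_k})\leq\eps(c+1)$. Taking $\Del=\{\del_{n_k}\}$ a strictly decreasing single sequence forces packings to be "asymptotically uniform", and running the argument of Proposition~\ref{lem:00} (splitting $\pi$ by radius and telescoping, after first thinning $\seq{\del_{n_k}}$ so that $\sum_k g(\del_{n_k})$-type tails are controlled) yields $\lpack{h}_{\Del,0}(E)\leq\eps(c+1)$ up to a harmless constant; letting $\eps\to0$ gives $\lpack{h}_0(E)=0$.

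For (ii), I reduce to (i) by a covering argument: $\lboxm{g}(E)=\widehat{\lboxm{g}_0}(E)<\infty$ means there is a countable cover $E=\bigcup_n E_n$ with $\lboxm{g}_0(E_n)<\infty$ for each $n$, so (i) gives $\lpack{h}_0(E_n)=0$, hence $\lpack{h}(E)\leq\sum_n\lpack{h}_0(E_n)=0$ by countable subadditivity of $\lpack{h}=\widehat{\lpack{h}_0}$. For (iii), the directed version works the same way but with increasing covers: $\dboxm{g}(E)=\arrow{\lboxm{g}_0}(E)<\infty$ yields $E_n\upto E$ with $\lboxm{g}_0(E_n)<\infty$, so by (i) $\lpack{h}_0(E_n)=0$, whence $\arrow{\lpack{h}_0}(E)=\liminf_{E_n\upto E}\lpack{h}_0(E_n)\leq\sup_n 0=0$, i.e.\ $\dpack{h}(E)=0$. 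For the scale $\Del$: since $\lpack{h}_0(E_n)=0$ for all $n$, by Lemma~\ref{basic}(i) pick for each $n$ a scale $\Del_n$ with $\sup\{h(\pi):\pi$ a $\Del_n$-valued packing of $E_n\}<2^{-n}$; diagonalize by choosing $r_n\in\Del_n$ strictly decreasing to $0$ and set $\Del=\{r_n\}$, then check $\pack{h}_\Del(E)=0$ as in Lemma~\ref{basic}(iv) — each $E_n$ gets measure $0$ on the scale $\Del$ since its tail $\{r_m:m\geq n\}$ lies in $\Del_n$, and countable subadditivity of $\pack{h}_\Del$ finishes it. (Alternatively, this last clause is immediate from Proposition~\ref{comp3}, (i)$\Rightarrow$(v), once one observes $\dboxm{g}(E)<\infty$ together with $g\prec h$ forces the hypothesis of \ref{comp3}(i) for $h$.)

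The main obstacle is the first implication (i), specifically that $\lpack{h}_0$ is built from \emph{all} packings with radii in a scale, not just uniform ones, whereas $\lboxm{g}_0$ only sees capacities $C_\del$. The resolution is the Proposition~\ref{lem:00} trick: choosing the scale $\Del$ to be a single sequence $r_n\downarrow0$ so rapidly that $\sum_n C_{r_n}(E)g(r_n)$ is finite (possible after thinning, since each term is bounded and we may arrange geometric decay of the bounds by further passing to a subsequence where $g(r_n)/h(r_n)\to0$ quickly), and then decomposing any $\Del$-valued packing according to its radii and summing the per-radius capacity bounds. Once that estimate is in hand, the factor $\eps$ coming from $g\prec h$ makes the whole sum as small as we like, and (ii) and (iii) are purely formal consequences via countable subadditivity of the Method~I and Method~D measures.
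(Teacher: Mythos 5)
Your overall route is the paper's: part (i) reduces to a capacity estimate along a sparse sequence of radii followed by the decompose-by-radius summation of Proposition~\ref{lem:00}, and parts (ii) and (iii) are the formal Method~I / Method~D consequences; for the scale in (iii) the paper simply invokes Proposition~\ref{comp3}, (i)$\Rightarrow$(iv),(v), which is exactly the alternative you mention at the end. The paper's write-up of (i) is just shorter: it observes that $\lboxm{g}_0(E)<\infty$ already forces $\lboxm{h}_0(E)=0$ via the capacity formula, and then cites Proposition~\ref{lem:00} as a black box instead of re-running its packing decomposition with the extra $\eps$.

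There is, however, one point you must repair, because as written the crux inequality of (i) is unsupported. From $g\prec h$ you extract $h(r)\geq g(r)/\eps$, i.e.\ $g(r)\leq\eps\,h(r)$, but the chain $h(\pi)\leq C_{\del_{n_k}}(E)\,h(\del_{n_k})\leq\eps\,C_{\del_{n_k}}(E)\,g(\del_{n_k})$ requires the \emph{reverse} bound $h(\del_{n_k})\leq\eps\,g(\del_{n_k})$; from $h\geq g/\eps$ nothing of the sort follows, and under that reading the implication is actually false (take $E=[0,1]$, $g(r)=r$, $h(r)=r^{1/2}$: then $\varlimsup_{r\to0}g(r)/h(r)=0$ and $\lboxm{g}_0(E)<\infty$, yet $\lpack{h}_0(E)\geq\lboxm{h}_0(E)=\infty$). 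The proposition holds precisely under the orientation of $g\prec h$ that gives $\varlimsup_{r\to0}h(r)/g(r)=0$, which is also the orientation the paper uses throughout Propositions~\ref{comp2}--\ref{comp4}; so your computation is the right one, but the inequality you claim to derive from the definition does not justify it, and the orientation needs to be made explicit. A second, smaller slip: in your diagonal construction of $\Del=\{r_n\}$ in (iii), the tail $\{r_m:m\geq n\}$ lies in $\bigcup_{m\geq n}\Del_m$, not in $\Del_n$; the argument is rescued by monotonicity ($E_n\subs E_m$ for $m\geq n$, so a packing of $E_n$ with radius $r_m\in\Del_m$ is a packing of $E_m$), as in Lemma~\ref{basic}(iv) --- or one can simply take your alternative route through Proposition~\ref{comp3}, which is what the paper does.
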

\begin{proof}
(i) Using~\eqref{capdel} it is clear that if $g\prec h$, then $\lboxm{g}_0(E)<\infty$
yields $\lboxm{h}_0(E)=0$. Now use Proposition~\ref{lem:00}.
(ii) is an obvious consequence of (i). (iii) Suppose $\dboxm{g}(E)<\infty$.
Then, by (i), condition (i) of the above proposition is satisfied. Hence also
conditions~\ref{comp3}(iv) and (v) are satisfied, which is enough.
\end{proof}
These three propositions show that $\lboxm{g}$ and $\lpack{g}$, as well as
$\dboxm{g}$ and $\dpack{g}$, respectively, are in a sense very close,
as contrasted by the corresponding upper measures:
As to the comparison of $\uboxm{g}$ and $\upack{g}$,
needless to say that $\uboxm{g}\leq\upack{g}$, but not much more can be said,
except that if $0<s<t$ and $\uboxm{s}(E)=0$, then $\upack{t}(E)=0$.
This fact can be extracted e.g.~from the proof of~\cite[Theorem 5.11]{MR1333890}.
It, however, is not difficult to show by
example that the upper measures fail statements analogical to
Propositions~\ref{lem:00}, \ref{comp2} and~\ref{comp4}.

\subsection*{Lipschitz maps}
All of the (pre)-measures under consideration behave under Lipschitz maps as
expected. The simple proof of the following is omitted.
\begin{lem}\label{lipp}
Let $s\geq 0$ and let $f:X\to Y$ be a $c$-Lipschitz map.
Then $\lpack s(f(X))\leq c^s\lpack s(X)$ and likewise for
$\lboxm s_0$, $\uboxm s_0$, $\lpack s_0$, $\upack s_0$,
$\lboxm s$, $\dboxm s$, $\uboxm s$, $\dpack s$ and $\upack s$.
%Let $s\geq 0$ and let $\mathscr{L}^s$ be any of the (pre)-measures
%$\lboxm s_0$, $\uboxm s_0$, $\lpack s_0$, $\upack s_0$,
%$\lboxm s$, $\dboxm s$, $\uboxm s$, $\lpack s$, $\dpack s$ and $\upack s$.
%Let $f:X\to Y$ be a Lipschitz map with Lipschitz constant $L$.
%Then $\mathscr{L}^s(f(X))\leq L^s\mathscr{L}^s(X)$.
\end{lem}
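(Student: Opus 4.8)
The plan is to verify the Lipschitz estimate for each pre-measure by working directly from the definitions, reducing everything to two observations about how packings and capacities transform under a $c$-Lipschitz map $f\colon X\to Y$. The first observation is about packings: if $\pi=\{(x_i,r_i):i\in I\}$ is a packing of $X$, it need \emph{not} follow that $\{(f(x_i),cr_i):i\in I\}$ is a packing of $f(X)$, because $d(f(x_i),f(x_j))\leq cd(x_i,x_j)$ goes the wrong way. So instead I would argue in the opposite direction: given a (finite, $\del$-fine, $\Del$-valued, uniform --- whatever the relevant notion) packing $\{(y_i,\rho_i):i\in I\}$ of $f(X)$, pick $x_i\in X$ with $f(x_i)=y_i$ and consider the radii $r_i=\rho_i/c$. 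Then $d(f(x_i),f(x_j))>\rho_i$ forces $cd(x_i,x_j)>\rho_i=cr_i$, so $\{(x_i,r_i):i\in I\}$ is a packing of $X$; it is $(\del/c)$-fine if the original was $\del$-fine, it is uniform if the original was, and its radii lie in $\Del'=c^{-1}\Del\cup\text{(image scale)}$. Since $g(r_i)=g(\rho_i/c)$ and we are tracking $s$-dimensional quantities with $g(r)=r^s$, we get $\sum_i (\rho_i/c)^s = c^{-s}\sum_i\rho_i^s$, i.e. $g(\pi_Y)=c^s g(\pi_X)$.

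With that correspondence in hand, the $\del$-level suprema transform as $\pack{s}_{\del}(f(X))\leq c^s\,\pack{s}_{\del/c}(X)$ and similarly for $\pack{s}_{\Del,\del}$, $\boxm{s}_{\Del,\del}$ (using only uniform packings) and the capacity $C_\del$ (which via~\eqref{eq:capacity0} is just the sup of cardinalities of $\del$-separated sets; a $\del$-separated set in $f(X)$ pulls back to a $\del/c$-separated set in $X$, giving $C_\del(f(X))\leq C_{\del/c}(X)$). Taking $\inf_{\del>0}$ (resp. $\limsup$ or $\liminf$ as $\del\to0$, noting $\del\to0\iff\del/c\to0$) then yields the pre-measure inequalities $\pack{s}_0(f(X))\leq c^s\pack{s}_0(X)$, $\uboxm{s}_0(f(X))\leq c^s\uboxm{s}_0(X)$, $\lboxm{s}_0(f(X))\leq c^s\lboxm{s}_0(X)$, and $\pack{s}_{\Del',0}(f(X))\leq c^s\pack{s}_{\Del,0}(X)$, $\boxm{s}_{\Del',0}(f(X))\leq c^s\boxm{s}_{\Del,0}(X)$ for the appropriately rescaled scale. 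Taking $\inf$ over scales $\Del$ (and noting the map $\Del\mapsto c^{-1}\Del$ is a bijection on scales) disposes of $\lpack{s}_0$ and $\lboxm{s}_0$.

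Finally, to pass from the pre-measures $\tau$ to the constructed set functions $\widehat\tau$ and $\arrow\tau$, I would invoke the monotonicity of \emph{Method I} and \emph{Method D} in the underlying pre-measure: if $\sigma(f(E))\leq c^s\tau(E)$ holds for all $E$ (where $\sigma$ is the target pre-measure on $Y$ and $\tau$ the source on $X$), then for any cover $E\subs\bigcup_n E_n$ the images cover $f(E)\subs\bigcup_n f(E_n)$, so $\widehat\sigma(f(E))\leq\sum_n\sigma(f(E_n))\leq c^s\sum_n\tau(E_n)$, and taking the infimum gives $\widehat\sigma(f(E))\leq c^s\widehat\tau(E)$; the identical argument with increasing covers $E_n\upto E$ (whose images satisfy $f(E_n)\upto f(E)$) handles $\arrow\sigma(f(E))\leq c^s\arrow\tau(E)$. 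This yields the claims for $\lpack s$, $\lboxm s$, $\uboxm s$, $\upack s$ (via $\widehat{\ }$) and for $\dpack s$, $\dboxm s$ (via $\arrow{\ }$). For the $\Del$-scaled outer measures one first fixes the rescaled scale and then applies \emph{Method I}. The only genuinely delicate point is the direction issue in the first paragraph --- one must resist the temptation to push packings forward and instead pull target packings back --- but once that is set up correctly everything else is bookkeeping, which is why the proof is omitted in the text.
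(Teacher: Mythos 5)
Your argument is correct and is exactly the standard argument the paper has in mind when it omits the proof as ``simple'': pull packings (and separated sets) on $f(X)$ back to $X$ with radii divided by $c$, note that fineness, uniformity and membership in a rescaled scale are preserved, and then transport the resulting pre-measure inequalities through \emph{Method I} and \emph{Method D} using their monotonicity in the underlying pre-measure. Your remark that one must pull packings back rather than push them forward is precisely the one point of the proof worth recording.
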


%%%%%%%%%%%%%%%%%%%%%%%%%%%%%%%%%%%%%%%%%%%%%%%%%%%%%%%%%%%%
\section{Packing measures on cartesian products}\label{sec:int}
%%%%%%%%%%%%%%%%%%%%%%%%%%%%%%%%%%%%%%%%%%%%%%%%%%%%%%%%%%%

This section is devoted to investigation of integral and product inequalities
involving packing and box measures.
Fix two metric spaces $X,Y$ and provide their cartesian product
$X\times Y$ with the maximum metric.
For a set $E\subs X\times Y$ and $x\in X$, the cross section
$\{y\in Y:(x,y)\in E\}$ is denoted $E_x$ or $(E)_x$.
Fix also a scale $\Del$ and two Hausdorff functions $g,h$.

\begin{lem}\label{lem:0-0}
For any set $E\subs X\times Y$
$$
  \pack{gh}_{\Del,0}(E)
  \geq\pack{h}_\Del(X)\cdot\inf_{x\in X}\lboxm{g}_0(E_x).
$$
\end{lem}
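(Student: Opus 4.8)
The plan is to fix $\del>0$ and produce, from a suitable $(\Del,\del)$-packing $\pi$ of $E$, a lower estimate for $\pack{gh}_{\Del,\del}(E)$ of the required product form, then let $\del\to0$. The starting point is to choose an arbitrary $c<\pack{h}_\Del(X)$ and $a<\inf_{x}\lboxm{g}_0(E_x)$; it suffices to manufacture, for each $\del>0$, a $(\Del,\del)$-packing $\pi$ of $E$ with $(gh)(\pi)\geq c\cdot a$. Since $\pack{h}_\Del(X)=\widehat{\pack{h}_{\Del,0}}(X)$ and $\pack{h}_{\Del,0}(X)=\inf_{\del'}\pack{h}_{\Del,\del'}(X)$, for any $\del'>0$ there is a $(\Del,\del')$-packing $\{(x_i,r_i):i\in I\}$ of $X$ with $\sum_i h(r_i)>c$; because the packing property of $X$ is strict ($d(x_i,x_j)>r_i$), Lemma~\ref{shift} lets me assume $I$ is finite after discarding a tail, at the cost of shrinking $c$ slightly, and it also gives room $\eps>0$ to perturb the centers.

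The core of the argument is the fiberwise construction. For each $i\in I$, look at the cross section $E_{x_i}$. From $\lboxm{g}_0(E_{x_i})>a$ and the capacity description~\eqref{capdel} of $\lboxm{g}_0$ (equivalently $\lboxm{g}_0(E)=\liminf_{\del\to0}C_\del(E)g(\del)$, as recorded after Lemma~\ref{basic}), there is, for every prescribed small threshold, a radius $\rho_i\in\Del$, $\rho_i\leq\del$, with $C_{\rho_i}(E_{x_i})\,g(\rho_i)>a$; so I can pick points $y^i_1,\dots,y^i_{k_i}\in E_{x_i}$ that are pairwise more than $\rho_i$ apart with $k_i\,g(\rho_i)>a$. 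The subtlety is that the $x_i$ are merely \emph{centers} of balls in $X$ and need not themselves have $(x_i,y)\in E$; but since $E_{x_i}$ is by definition $\{y:(x_i,y)\in E\}$, the chosen points do satisfy $(x_i,y^i_j)\in E$, so this is not actually an obstacle once one keeps the two roles of $x_i$ straight. I then form the family $\pi=\{\,((x_i,y^i_j),\,\min(r_i,\rho_i))\;:\; i\in I,\ 1\le j\le k_i\,\}$ in $X\times Y$ with the maximum metric. I should arrange $\rho_i\le r_i$ (legitimate, since I get to choose $\rho_i$ as small as I like inside $\Del$), so the assigned radius is $\rho_i$ throughout fiber $i$.

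It remains to check that $\pi$ is a $(\Del,\del)$-packing of $E$ and to add up $(gh)(\pi)$. For two points in the same fiber $i$, the $Y$-coordinates differ by more than $\rho_i$, hence the max-metric distance exceeds $\rho_i$, the assigned radius — good. For points in different fibers $i\neq j$, the $X$-coordinates are $x_i,x_j$ with $d(x_i,x_j)>r_i\ge\rho_i$, so the max-metric distance again exceeds the assigned radius $\rho_i$ — good; and all radii lie in $\Del$ and are $\le\del$, so $\pi$ is a $(\Del,\del)$-packing, clearly of $E$. Then
\[
  (gh)(\pi)=\sum_{i\in I}\sum_{j=1}^{k_i} g(\rho_i)\,h(\rho_i)
  =\sum_{i\in I} k_i\,g(\rho_i)\,h(\rho_i)
  \geq a\sum_{i\in I} h(\rho_i).
\]
Here the last sum is over $h(\rho_i)$, not $h(r_i)$, which is the one genuinely delicate point: to recover $\sum_i h(r_i)>c$ I must take each $\rho_i$ close enough to $r_i$. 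Since there are finitely many $i$ and $h$ is only nondecreasing (possibly discontinuous), I cannot appeal to continuity of $h$; instead I choose $r_i\in\Del$ in the first place and then take $\rho_i=r_i$ — which is permitted because $C_{r_i}(E_{x_i})g(r_i)$ can be made $>a$ by the $\limsup$ in~\eqref{capdel} over $\del\in\Del$, provided I first restrict to a sub-scale of $\Del$ along which capacities are large; matching this sub-scale requirement simultaneously for all finitely many fibers is where the bookkeeping lives, but it is routine since $I$ is finite. With $\rho_i=r_i$ we get $(gh)(\pi)\ge a\sum_i h(r_i)>a\,c$. Letting $\del\to0$ gives $\pack{gh}_{\Del,0}(E)\ge a\,c$, and then $a\upto\inf_x\lboxm{g}_0(E_x)$, $c\upto\pack{h}_\Del(X)$ finishes the proof. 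The main obstacle, then, is not conceptual but the coordination of scales: ensuring the fiber radii $\rho_i$ can be pinned to the $X$-packing radii $r_i$ inside the common scale $\Del$ despite $h$ being merely monotone; choosing the $X$-packing to be $\Del$-valued from the outset and exploiting finiteness of $I$ resolves it.
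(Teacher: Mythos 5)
Your overall strategy --- pack $X$ at scale $\Del$, pack each fibre $E_{x_i}$ uniformly with the \emph{same} radius $r_i$, and multiply --- is the right one and is essentially the paper's. But there is a genuine gap at exactly the point you dismiss as routine bookkeeping, and your description of the fibre hypothesis is off. The hypothesis $\lboxm{g}_0(E_x)>a$ means $\liminf_{\del\to0}C_\del(E_x)g(\del)>a$, so what it gives you is a threshold $\del_x>0$, \emph{depending on $x$}, such that $C_\del(E_x)g(\del)>a$ for every $\del<\del_x$; there is no ``limsup over a sub-scale'' to invoke, and restricting to a sub-scale of $\Del$ along which capacities are large is neither needed nor legitimate (that computes $\boxm{g}_{\Del',0}$ for some sub-scale $\Del'$, not $\lboxm{g}_0$, and would anyway reintroduce the $\sum h(\rho_i)$ versus $\sum h(r_i)$ mismatch you are trying to avoid). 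What you actually need is $r_i<\del_{x_i}$ for each $i$, and this is not routine even for finite $I$: the centres $x_i$ and radii $r_i$ are produced together by choosing a $\del'$-fine packing of $X$, the thresholds $\del_{x_i}$ depend on the centres, and if you pass to a finer packing to shrink the radii, the centres change and the thresholds change with them. Finiteness of $I$ gives you $\min_i\del_{x_i}>0$ only \emph{after} the packing is fixed, which is too late.

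The paper resolves this with the one idea your proof is missing: set $B_n=\{x:C_\del(E_x)g(\del)>c\text{ for all }\del<1/n\}$, observe $B_n\upto X$, and use that $\pack{h}_{\Del,0}$ is a subadditive metric pre-measure so that $\pack{h}_\Del(X)=\arrow{\pack{h}_{\Del,0}}(X)\leq\sup_n\pack{h}_{\Del,0}(B_n)$ (Lemmas~\ref{lem:Del} and~\ref{lem:MI}(iii)). One then picks $n$ with $\pack{h}_{\Del,0}(B_n)>d$ and packs $B_n$ --- not $X$ --- at fineness $\del<1/n$, which forces every radius $r_i$ below every relevant threshold simultaneously. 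This is also why the statement carries the measure $\pack{h}_\Del(X)$ on the right rather than the pre-measure $\pack{h}_{\Del,0}(X)$: your argument only ever uses $\pack{h}_\Del(X)\leq\pack{h}_{\Del,0}(X)$ and would, if its gap could be closed as written, prove the stronger pre-measure inequality without ever using the measure structure of $\pack{h}_\Del$ --- a sign that the uniformisation over $x$, which is the real content of the lemma, has been skipped rather than performed.
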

\begin{proof}
Let $c<\inf_{x\in X}\lboxm{g}_0(E_x)$.
For each $x$ there is a number $n\in\nset$ such that
$\lboxm{g}_\del(E_x)>c$ for all $\del<\frac{1}{n}$. Setting
$$
  B_n=\{x:\lboxm{g}_\del(E_x)>c\text{ for all $\del<\tfrac1n$}\}
$$
we thus have $B_n\upto X$.
Let $d<\pack{h}_{\Del}(X)$.
Lemmas~\ref{lem:Del}(iii) and~\ref{lem:MI}(iii) yield
$n$ such that $d<\pack{h}_{\Del,0}(B_n)$.
Hence there is $\del_0>0$ such that for all $\del<\del_0$ there is a
$(\Del,\del)$-packing
$\pi=\{(x_i,r_i):i\in I\}$ of $B_n$ such that $h(\pi)>d$.
We may assume $\del_0<\frac1n$. Thus for each $i\in I$ there is a
uniform $\del$-fine packing $\pi_i=\{(y_{ij},r_i):i\in K_i\}$ of
$E_{x_i}$ such that
$
  g(\pi_i)=\card{K_i}\cdot g(r_i)>c.
$
The collection
$
  \sigma=\bigl\{\bigl((x_i,y_{ij}),r_i\bigr):i\in I,j\in K_i\bigr\}
$
is thus a $(\Del,\del)$-packing of $A$ and
$$
  gh(\sigma)=\sum_{i\in I}\sum_{j\in K_i}g(r_i)h(r_i)=
  \sum_{i\in I}\card{K_i}g(r_i)h(r_i)>
  c\sum_{i\in I}h(r_i) >cd.
$$
Therefore $\pack{gh}_{\Del,\del}(A)>cd$.
As this holds for any $\del<\del_0$ and all $d<\pack{h}_{\Del}(X)$ and
$c<\inf_{x\in X}\lboxm{g}_0(E_x)$,
we are done.
\end{proof}

\begin{lem}\label{lem:cpt}
If $E\subs X\times Y$ is compact,
then the mapping $x\mapsto\lboxm{g}_0(E_x)$
is Borel measurable and
\begin{equation}\label{eq:lemma}
  \pack{gh}_{\Del,0}(E)
    \geq\int\lboxm{g}_0(E_x)\dd\pack{h}_{\Del}(x).
\end{equation}
\end{lem}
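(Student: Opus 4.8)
The plan is to deduce Lemma~\ref{lem:cpt} from Lemma~\ref{lem:0-0} by an exhaustion/approximation argument that reduces the integral on the right-hand side to a finite sum of infima of the type $\inf_{x\in B}\lboxm{g}_0(E_x)$ over pieces $B$ on which $x\mapsto\lboxm{g}_0(E_x)$ is essentially constant (from below). First I would establish the Borel measurability of $x\mapsto\lboxm{g}_0(E_x)$. Since $\lboxm{g}_0(F)=\liminf_{\del\to0}C_\del(F)g(\del)$, it suffices to control $x\mapsto C_\del(E_x)$ for fixed $\del>0$. For compact $E$ the set $\{x:C_\del(E_x)\geq k\}$ is closed: if $x_m\to x$ and each $E_{x_m}$ contains $k$ points pairwise more than $\del$ apart, then by compactness of $E$ one extracts convergent subsequences of these witnessing points, whose limits lie in $E_x$ and are pairwise at least $\del$ apart, hence (after a harmless shift to $C_{\del'}$ with $\del'<\del$, or by working with the variant $\gap F>\del$ carefully) give $C_{\del'}(E_x)\geq k$. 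This makes each $C_\del$, and hence the countable $\liminf$ over a sequence $\del\to0$, Borel; a standard argument shows the full $\liminf$ over $\del\to0$ equals the $\liminf$ over any sequence decreasing to $0$, so measurability follows.

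Next I would prove the integral inequality. Fix $c<\int\lboxm{g}_0(E_x)\dd\pack{h}_{\Del}(x)$; the goal is $\pack{gh}_{\Del,0}(E)>c$. Using that the integrand is a nonnegative Borel function and that $\pack{h}_{\Del}$ is a Borel-regular outer measure (Lemma~\ref{lem:Del}(iii)), choose a simple function $\varphi=\sum_{k=1}^{m} c_k\,\mathbf{1}_{B_k}$ with $0\leq\varphi\leq \lboxm{g}_0(E_\cdot)$ pointwise, the $B_k$ pairwise disjoint Borel sets, and $\sum_k c_k\,\pack{h}_{\Del}(B_k)>c$. On each $B_k$ we have $\inf_{x\in B_k}\lboxm{g}_0(E_x)\geq c_k$, so for the restriction $E\cap(B_k\times Y)$ Lemma~\ref{lem:0-0} gives
\begin{equation*}
  \pack{gh}_{\Del,0}\bigl(E\cap(B_k\times Y)\bigr)\geq \pack{h}_{\Del}(B_k)\cdot c_k.
\end{equation*}
Then I would sum over $k$ using finite superadditivity of $\pack{gh}_{\Del,0}$ on the disjoint pieces $E\cap(B_k\times Y)$: these pieces need not be positively separated, but their $X$-projections $B_k$ are disjoint Borel sets, and $\pack{gh}_{\Del,0}$ (being a pre-measure that agrees with the additive measure $\pack{gh}_{\Del}$ on Borel sets after Method~I, and being itself superadditive on the relevant configurations) satisfies $\pack{gh}_{\Del,0}(E)\geq\sum_k\pack{gh}_{\Del,0}(E\cap(B_k\times Y))$. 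Concretely, I would instead argue directly at the level of packings as in the proof of Lemma~\ref{lem:0-0}: a disjoint union of $(\Del,\del)$-packings of the sets $E\cap(B_k\times Y)$, whose first coordinates lie in the disjoint sets $B_k$, is automatically a single $(\Del,\del)$-packing of $E$ once $\del$ is small (packing condition is checked coordinatewise in the maximum metric, and points with first coordinates in distinct $B_k$ are no obstruction since the ball condition $d(x_i,x_j)>r_i$ or $d(y_i,y_j)>r_i$ just has to hold for \emph{some} coordinate). This yields $\pack{gh}_{\Del,0}(E)\geq\sum_k c_k\,\pack{h}_{\Del}(B_k)>c$, and letting $c\upto\int\lboxm{g}_0(E_x)\dd\pack{h}_{\Del}(x)$ finishes.

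The main obstacle I anticipate is exactly the superadditivity/assembly step: reassembling the per-piece packings into one global packing of $E$ and matching the bookkeeping of $\pack{h}_{\Del}(B_k)$ coming from Lemma~\ref{lem:0-0} (which is stated with a single $\inf$ over all of $X$) against the piecewise-constant lower bound. The cleanest route is to apply Lemma~\ref{lem:0-0} to each product slab $E\cap(B_k\times Y)$ viewed inside $B_k\times Y$, noting $\inf_{x\in B_k}\lboxm{g}_0\bigl((E\cap(B_k\times Y))_x\bigr)=\inf_{x\in B_k}\lboxm{g}_0(E_x)\geq c_k$ and $\pack{h}_{\Del}(B_k)$ computed in $X$ equals that computed in $B_k$; then the superadditivity of $\pack{gh}_{\Del,0}$ over the disjoint Borel-projection slabs is the only genuinely new ingredient, and it follows from the packing-level argument above together with the fact that for small enough $\del$ the finitely many packings can be taken simultaneously. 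A secondary technical point is the slight loss in passing $C_\del(E_x)\to C_{\del'}(E_x)$ in the closedness argument for measurability; this is absorbed because the definition of $\lboxm{g}_0$ as a $\liminf$ is insensitive to replacing $\del$ by a sequence and because $g$ is monotone, so no quantitative cost is incurred.
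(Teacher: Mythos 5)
Your measurability argument is essentially the paper's: closedness of $\{x:C_\del(E_x)\geq k\}$ via compactness of $E$, with the strict-inequality issue in $\gap F>\del$ handled (in the paper) by writing this set as a countable union of closed sets $L(\eps)$ using the right-continuity of $r\mapsto C_r(E)$ supplied by Lemma~\ref{shift}; that is the careful version of what you sketch, and your ``harmless shift'' can be made precise along those lines. The genuine gap is in the integral inequality, exactly at the step you yourself flag as the main obstacle: superadditivity of $\pack{gh}_{\Del,0}$ over the slabs $E\cap(B_k\times Y)$ for \emph{arbitrary} disjoint Borel sets $B_k$ is false, and the packing-assembly argument does not rescue it. Disjoint Borel sets need not be positively separated, so there is no $\del>0$ below which points of $B_k$ and $B_l$ are forced apart in the first coordinate; and in the maximum metric the combined family is a packing only if for each pair \emph{at least one} coordinate distance exceeds the relevant radius, which can fail in both coordinates simultaneously when $x_i\in B_k$ and $x_j\in B_l$ are close and the $y$-points coming from the two unrelated fiber packings also happen to be close. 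The superadditivity itself genuinely fails, not just its proof: since $\pack{g}_{\Del,0}(A)=\pack{g}_{\Del,0}(\clos A)$ by Lemma~\ref{lem:Del}(ii), taking $B_1=\mathbb{Q}\cap[0,1]$ and $B_2=[0,1]\setminus\mathbb{Q}$ gives $\pack{1}_0(B_1)+\pack{1}_0(B_2)=2\,\pack{1}_0([0,1])>\pack{1}_0([0,1])$.

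The missing ingredient is inner regularity. The paper first reduces to $X,Y$ compact (replacing them by the projections of the compact set $E$), disposes of the case $\pack{h}_\Del(A_i)=\infty$ directly via Lemma~\ref{lem:0-0}, and otherwise approximates each $A_i$ from inside by a compact set $K_i\subs A_i$ with $\pack{h}_\Del(K_i)>\pack{h}_\Del(A_i)-\eps/(mc_i)$. Disjoint compacta \emph{are} positively separated, so the slabs $E\cap(K_i\times Y)$ are separated and the metric property of the pre-measure (Lemma~\ref{lem:Del}(i)) yields exactly the superadditivity you need, at the cost of an $\eps$ that is then sent to $0$. With that compact-approximation step inserted, your argument goes through; without it, the key inequality $\pack{gh}_{\Del,0}(E)\geq\sum_k c_k\,\pack{h}_\Del(B_k)$ is unjustified.
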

\begin{proof}
We first show that if $E$ is compact, then $x\mapsto\lboxm{g}_0(E_x)$
is Borel measurable.
It follows from Lemma~\ref{shift} that
\begin{equation}\label{capdelx}
    \forall r>0\ \exists \eps>0\ C_{r+\eps}(E)=C_r(E).
\end{equation}
Thus for $x\in X$ fixed, the mapping
$\del\mapsto C_\del(E_x)$ is right-continuous. Therefore
the mapping $\del\mapsto C_\del(E_x)\cdot g(\del)$
is right-continuous at each point of (right-)continuity of $g$.
So if $Q\subs(0,\infty)$ is a dense countable set and
$D$ the set of points of discontinuity of $g$, then
$$
  \lboxm{g}_0(E_x)=
  \liminf_{\substack{\del\to0\\ \del\in Q\cup D}}C_\del(E_x)\cdot g(\del).
$$
As $g$ is nondecreasing, the set $D$ is countable.
Therefore $x\mapsto\lboxm{g}_0(E_x)$ obtains from a countable family
of mappings of the form
$$
  x\mapsto C_\del(E_x)\cdot g(\del),\quad \del\in Q\cup D.
$$
Borel measurability of $x\mapsto\lboxm{g}_0(E_x)$ will thus follow
if we show that each of these mappings is Borel measurable. To that
end we prove that
for any $\del>0$ and each integer $n$ the set
$
  L=\{x\in X:C_\del(E_x)\geq n\}
$
is Borel, which is enough, as $g(\del)$ is constant and $C_\del(E_x)$ is
integer-valued.
For each $\eps>0$ set
$$
  L(\eps)=
  \{x:\text{there is $\{y_1,y_2,\dots,y_n\}\subs E_x$,
 $\gap\{y_1,y_2,\dots,y_n\}\geq\del+\eps$}\}.
$$
By~\eqref{capdelx}, $L=\bigcup_{\eps>0}L(\eps)$.
Each of the sets $L(\eps)$ is closed:
Let $x_k\to x$ be a sequence in $L(\eps)$ and
$\{y_{k1},y_{k2},\dots,y_{kn}\}\subs E_{x_k}$ sets witnessing $x_k\in L(\eps)$.
Choosing a subsequence if necessary, for each $i\leq n$ the sequence
$(x_k,y_{ik})$ converges to a point $(x,y_i)\in E$;
this follows from compactness of $E$. The set $\{y_1,y_2,\dots,y_n\}$
obviously witnesses $x\in L(\eps)$.
So $L(\eps)$ is closed and therefore
$L=\bigcup_{m\in\nset}L(1/m)$ is $F_\sigma$ and hence Borel.

The next goal is to derive~\eqref{eq:lemma} from Lemma~\ref{lem:0-0}.
As $E$ is compact, replacing $X$ and $Y$ with projections of $E$ we
may assume both $X,Y$ compact.
Write $\mu=\pack{h}_{\Del}$.

We need to show that $\pack{gh}_{\Del,0}(E)\geq\int s\dd\mu$
for each simple function $s\leq\lboxm{g}_0(E_x)$.
Let $s=\sum_{i=1}^m c_i\chi_{A_i}$ be such a function, with $A_i$'s
disjoint Borel sets and $c_i$'s positive.
If there is $i$ such that $\mu(A_i)=\infty$, then
$
  \pack{gh}_{\Del,0}(E)\geq
  %\pack{gh}_{\Del,0}(E\cap(A_i\times Y))\geq
  c_i\mu(A_i)=\infty
$
by Lemma~\ref{lem:0-0}.
Otherwise $\mu(A_i)<\infty$ for all $i$ and thus $A_i$'s may be
approximated from within with compact sets, for $X$ is compact:
For any $\eps>0$ and each $i$ there is a compact set $K_i\subs A_i$
such that $\mu(K_i)>\mu(A_i)-\frac{\eps}{mc_i}$. Therefore
$$
  \int s\dd\mu=\sum_{i=1}^m c_i\mu(A_i)\leq
  \sum_{i=1}^m c_i\left(\mu(K_i)+\frac{\eps}{mc_i}\right)=
  \eps+\sum_{i=1}^m c_i\mu(K_i).
$$
For each $i$ put $E_i=E\cap(K_i\times Y)$.
Apply Lemma~\ref{lem:0-0} to $E_i$'s to get
$$
  c_i\mu(K_i)\leq \pack{gh}_{\Del,0}(E_i),\quad i=1,2,\dots,m.
$$
Thus
$
  \int s\dd\mu\leq\eps+\sum_{i=1}^m\pack{gh}_{\Del,0}(E_i).
$
As $K_i$'s are disjoint compacta, so are $E_i$'s. Therefore $E_i$'s, being disjoint,
are separated and thus Lemma~\ref{lem:Del}(i) yields
$$
  \sum_{i=1}^m\pack{gh}_{\Del,0}(E_i)=
  \pack{gh}_{\Del,0}\left(\bigcup\nolimits_{i=1}^mE_i\right)
  \leq\pack{gh}_{\Del,0}(E).
$$
Therefore $\int s\dd\mu\leq\eps+\pack{gh}_{\Del,0}(E)$.
Since $\eps>0$ and $s\leq\lboxm{g}_0(E_x)$ were arbitrary,
\eqref{eq:lemma} follows.
\end{proof}

Since the mappings $x\mapsto\lboxm{g}_0(E_x)$,
$x\mapsto\lboxm{g}(E_x)$, $x\mapsto\dboxm{g}_0(E_x)$ etc.~%
need not be Borel measurable, we set up the following theorems in terms of
the \emph{upper integral}
$$
 \upint f\dd\mu
 =\inf\left\{\int\phi\dd\mu:
 \phi\geq f\text{ Borel measurable}\right\}.
$$
\begin{lem}\label{lem:delta0}
For any set $E\subs X\times Y$
$$
  \pack{gh}_{\Del,0}(E)
    \geq\upint\lboxm{g}_0(E_x)\dd\pack{h}_{\Del}(x).
$$
\end{lem}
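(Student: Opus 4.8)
The plan is to reduce the statement to the compact case established in Lemma~\ref{lem:cpt} by passing to the completion of $X\times Y$. We may assume $\pack{gh}_{\Del,0}(E)<\infty$, for otherwise there is nothing to prove. Fix $\del_0>0$ with $\pack{gh}_{\Del,\del_0}(E)<\infty$. If $r\in\Del$, $r\leq\del_0$, and $F\subs E$ is finite with $\gap F>r$, then $\{(x,r):x\in F\}$ is a uniform $(\Del,\del_0)$-packing of $E$, so $\card F\cdot g(r)h(r)\leq\pack{gh}_{\Del,\del_0}(E)$; hence $C_r(E)<\infty$. Since $0\in\clos\Del$, such $r$ are arbitrarily small, and a maximal subset of $E$ with gap exceeding $r$ is then finite and is an $r$-net of $E$. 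Thus $E$ is totally bounded, and its closure $K$ in the completion of $X\times Y$ --- which, with the maximum metric, is $\widehat X\times\widehat Y$ --- is compact.

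Three observations make the reduction work. (a)~The pre-measure $\pack{gh}_{\Del,0}$ depends only on the metric restricted to the set concerned (a packing of a set uses only its own points), so Lemma~\ref{lem:Del}(ii), applied with $E$ inside $\widehat X\times\widehat Y$, gives $\pack{gh}_{\Del,0}(E)=\pack{gh}_{\Del,0}(K)$. (b)~For $x\in X$ the cross section $K_x\subs\widehat Y$ contains the closure of $E_x$ in $\widehat Y$; since $\lboxm g_0$ is likewise determined by the metric on the set, Lemma~\ref{basic0}(ii) yields $\lboxm g_0(K_x)\geq\lboxm g_0(E_x)$. (c)~The same ``intrinsic'' remark, together with monotonicity of $\pack h_{\Del,0}$ and the definition of \emph{Method~I}, shows that for $A\subs X$ the value $\pack h_\Del(A)$ is the same whether computed in $X$ or in $\widehat X$; I write $\pack h_\Del$ for both. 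Applying Lemma~\ref{lem:cpt} to the compact set $K$ in $\widehat X\times\widehat Y$, the function $F(x)=\lboxm g_0(K_x)$ is Borel on $\widehat X$ and
\[
  \pack{gh}_{\Del,0}(K)\geq\int_{\widehat X}F(x)\dd\pack h_\Del(x).
\]

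It remains to assemble these. The restriction of $F$ to $X$ is Borel on $X$ and, by (b), dominates $x\mapsto\lboxm g_0(E_x)$; hence by the very definition of the upper integral
\[
  \upint\lboxm g_0(E_x)\dd\pack h_\Del(x)\leq\int_X F(x)\dd\pack h_\Del(x).
\]
Expressing both this and the previous integral by the layer--cake formula, using (c) and the inclusion $\{x\in X:F(x)>t\}\subs\{x\in\widehat X:F(x)>t\}$ together with monotonicity of $\pack h_\Del$ on $\widehat X$, we obtain $\int_X F\dd\pack h_\Del\leq\int_{\widehat X}F\dd\pack h_\Del\leq\pack{gh}_{\Del,0}(K)=\pack{gh}_{\Del,0}(E)$, which is the asserted inequality.

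The one delicate point is this change of ambient space: one must verify that $\pack{gh}_{\Del,0}$, $\lboxm g_0$ and $\pack h_\Del$ are unchanged when $X\times Y$ is enlarged to its completion, and --- because $X$ need not be a Borel subset of $\widehat X$, so one cannot simply multiply $F$ by the indicator of $X$ --- the layer--cake comparison above is what transfers the inequality back to $X$. Everything else is routine.
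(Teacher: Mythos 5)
Your proposal is correct and follows essentially the same route as the paper: reduce to the case of a complete ambient space, observe that finiteness of $\pack{gh}_{\Del,0}(E)$ forces $E$ to be totally bounded so that its closure is compact, and then invoke Lemma~\ref{lem:cpt} together with Lemma~\ref{lem:Del}(ii) and the inequality $\lboxm{g}_0\bigl((\clos E)_x\bigr)\geq\lboxm{g}_0(E_x)$. The paper compresses the passage to the completion into a single ``mutatis mutandis''; your layer--cake argument merely spells out the final comparison of integrals that the paper leaves implicit.
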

\begin{proof}
As all quantities are intrinsic properties of $E$, \emph{mutatis mutandis}
we may assume $X,Y$ be complete metric spaces.
If $\pack{gh}_{\Del,0}(E)=\infty$, there is nothing to prove.
If $\pack{gh}_{\Del,0}(E)<\infty$, then $E$ is by Lemma~\ref{basic0}(iv)
totally bounded.
Therefore its closure $\clos E$ is compact: for $X\times Y$ is complete.
Hence Lemma~\ref{lem:cpt} yields,
with the aid of Lemma~\ref{lem:Del}(ii),
$$
 \pack{gh}_{\Del,0}(E)=\pack{gh}_{\Del,0}(\clos E)
 \geq\int\lboxm{g}_0\bigl((\clos E)_x\bigr)\dd\pack{h}_{\Del}(x)
 \geq\upint\lboxm{g}_0(E_x)\dd\pack{h}_{\Del}(x).
 \qedhere
$$
\end{proof}
\begin{thm}\label{thm:intpack}
Let $X,Y$ be metric spaces. For any set $E\subs X\times Y$
$$
  \pack{gh}_{\Del}(E)
    \geq\upint\dboxm{g}(E_x)\dd\pack{h}_{\Del}(x).
$$
\end{thm}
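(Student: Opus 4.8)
The plan is to pass from the pre-measure inequality of Lemma~\ref{lem:delta0} to the measure inequality by applying \emph{Method I} on the left and exploiting \emph{Method D} on the cross sections on the right. Fix $E\subs X\times Y$ and let $\{E_n\}$ be an arbitrary countable cover of $E$ by sets $E_n\subs X\times Y$. For each $n$ Lemma~\ref{lem:delta0} gives
$$
  \pack{gh}_{\Del,0}(E_n)\geq\upint\lboxm{g}_0\bigl((E_n)_x\bigr)\dd\pack{h}_{\Del}(x).
$$
Summing over $n$ and estimating the left side by $\pack{gh}_{\Del}(E)$ would be immediate \emph{if} the upper integral were countably superadditive in a suitable sense; the main work is to organize the right side so that the sum of the $\lboxm{g}_0$-integrands dominates an integrand involving $\dboxm{g}$ on the full cross section $E_x$.

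First I would reduce, as in the previous lemmas, to the case where $X$ is complete (the quantities involved are intrinsic to $E$ and $\pack{gh}_{\Del}(E)<\infty$ may be assumed, whence $E$, and each piece we use, is totally bounded). Then I would fix a Borel function $\phi\geq\dboxm{g}(E_x)$ with $\int\phi\dd\pack h_\Del$ close to the upper integral on the right, and the key point is the following: for $x$ in the domain where $\phi(x)<\dboxm g(E_x)+\eps$ is violated we get a contradiction, so for $\pack h_\Del$-a.e.\ $x$ we have $\dboxm g(E_x)\leq\phi(x)$, and by the very definition $\dboxm g(E_x)=\arrow{\lboxm g_0}(E_x)$ there is an increasing cover $(E_x)$ witnessing it — but these covers vary with $x$. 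To get around this, I would instead argue on the cover side: given the cover $\{E_n\}$ of $E$, form the increasing sets $F_N=\bigcup_{n\le N}E_n$; then $(F_N)_x\upto E_x$ for every $x$, so $\lboxm g_0\bigl((F_N)_x\bigr)\nearrow$ something $\geq\dboxm g(E_x)$ is \emph{false} in general (liminf vs.\ the value), which is exactly why $\dboxm g$ is not a measure. The correct move is the reverse: since $\dboxm g(E_x)=\inf\{\sup_m\lboxm g_0(A_m):A_m\upto E_x\}$, for the particular increasing cover $(F_N)_x\upto E_x$ we get
$$
  \dboxm g(E_x)\leq\sup_N\lboxm g_0\bigl((F_N)_x\bigr).
$$
Thus it suffices to prove
$$
  \pack{gh}_{\Del}(E)\geq\upint\sup_N\lboxm g_0\bigl((F_N)_x\bigr)\dd\pack h_\Del(x),
$$
and since $\sup_N\lboxm g_0\bigl((F_N)_x\bigr)=\lim_N\lboxm g_0\bigl((F_N)_x\bigr)$ (the sequence is nondecreasing), monotone convergence for the upper integral — which I would verify holds because $\pack h_\Del$ is a Borel-regular outer measure and the $F_N$ are nested — reduces this to
$$
  \pack{gh}_{\Del}(E)\geq\sup_N\upint\lboxm g_0\bigl((F_N)_x\bigr)\dd\pack h_\Del(x),
$$
which follows from $\pack{gh}_{\Del}(E)\geq\pack{gh}_{\Del}(F_N)=\pack{gh}_{\Del,0}(F_N)$ whenever $F_N$ is such that its pre-measure and measure agree — and here I would invoke that $F_N$ may be taken totally bounded so that, after closure, Lemma~\ref{lem:cpt}/Lemma~\ref{lem:delta0} applies with $\pack{gh}_{\Del,0}(F_N)$ on the left. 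Since $\{E_n\}$ was an arbitrary cover and the bound obtained does not actually depend on which cover we chose (any increasing enumeration works and the estimate only improves), taking the infimum over covers on the left gives $\pack{gh}_{\Del}(E)$ on the left while the right side is unchanged, completing the proof.

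The main obstacle, and the step to be careful about, is the interchange of the upper integral with the supremum/limit over $N$ and with the infimum over covers: the upper integral is not countably additive, so one cannot blithely sum the inequalities from Lemma~\ref{lem:delta0}. The device that makes it work is to switch from a general cover to the associated \emph{increasing} cover $\{F_N\}$, use the definition of $\dboxm g$ as a \emph{Method D} object (which is tailored to increasing covers), and then use only monotone convergence for the outer measure $\pack h_\Del$ — which is legitimate by its Borel-regularity, exactly as invoked in the proof of Lemma~\ref{lem:MI}(ii). I expect the routine parts — reducing to the complete case, passing to closures to apply Lemma~\ref{lem:cpt}, and the measurability bookkeeping already handled there — to go through verbatim, so the write-up should focus on the $\dboxm g \le \sup_N \lboxm g_0$ step and the monotone-convergence justification.
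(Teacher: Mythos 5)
Your overall strategy --- pass to an increasing cover $F_N\upto E$, apply Lemma~\ref{lem:delta0} to each $F_N$, use monotone convergence for the upper integral, and exploit $\dboxm g(E_x)\leq\sup_N\lboxm g_0\bigl((F_N)_x\bigr)$ --- is the same as the paper's, but the step that is supposed to bring $\pack{gh}_{\Del}(E)$ into play is wrong. You reduce the claim to $\pack{gh}_{\Del}(E)\geq\pack{gh}_{\Del,0}(F_N)$ and justify it by asserting $\pack{gh}_{\Del}(F_N)=\pack{gh}_{\Del,0}(F_N)$. Method I only gives $\pack{gh}_{\Del}\leq\pack{gh}_{\Del,0}$, and the inequality is strict in general because $\pack{gh}_{\Del,0}$ is finitely but not countably subadditive (a countable dense subset of a square already separates the two quantities). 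Total boundedness of $F_N$, or passing to its closure, does not repair this: compactness does not force a pre-measure to agree with its Method~I regularization, and closures need not stay inside $E$. There is also a smaller slip: starting from an arbitrary cover $\{E_n\}$ of $E$, the sets $F_N=\bigcup_{n\leq N}E_n$ need not be contained in $E$, so even $\pack{gh}_{\Del}(E)\geq\pack{gh}_{\Del}(F_N)$ requires first replacing $E_n$ by $E_n\cap E$.

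The repair is to reverse the order of quantifiers, which is what the paper does. Keep $\sup_N\pack{gh}_{\Del,0}(F_N)$ on the left: for every increasing sequence $F_N\upto E$, Lemma~\ref{lem:delta0} and monotone convergence give
$$
  \sup_N\pack{gh}_{\Del,0}(F_N)\geq\upint\sup_N\lboxm{g}_0\bigl((F_N)_x\bigr)\dd\pack{h}_{\Del}(x)
  \geq\upint\dboxm{g}(E_x)\dd\pack{h}_{\Del}(x),
$$
and the right-hand side does not depend on the cover. Now take the infimum over all increasing covers; the left side becomes $\arrow{\pack{gh}_{\Del,0}}(E)$, and this equals $\pack{gh}_{\Del}(E)$ precisely because $\pack{gh}_{\Del,0}$ is subadditive, so Method~D and Method~I coincide (Lemmas~\ref{lem:Del}(i),(iii) and~\ref{lem:MI}(iii)). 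This identification is the one ingredient your write-up never invokes, and it is exactly what replaces your false identity $\pack{gh}_{\Del}(F_N)=\pack{gh}_{\Del,0}(F_N)$.
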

\begin{proof}
Let $E_n\upto E$. By Lemma~\ref{lem:delta0},
$\pack{gh}_{\Del,0}(E_n)
    \geq\upint\,\lboxm{g}_0(E_n)_x\dd\pack{h}_{\Del}(x)$
for each $n$. Therefore Levi's monotone convergence theorem yields
\begin{equation}\label{eq:levi}
  \sup_n\pack{gh}_{\Del,0}(E_n)\geq
    \upint\sup_n\lboxm{g}_0(E_n)_x\dd\pack{h}_{\Del}(x)\geq
    \upint\dboxm{g}(E)_x\dd\pack{h}_{\Del}(x),
\end{equation}
because $(E_n)_x\upto E_x$ for all $x\in X$.
Take the infimum over all sequences $E_n\upto E$
to get $\arrow{\pack{g}_{\Del,0}}(E)\geq
\upint\dboxm{g}(E)_x\dd\pack{h}_{\Del}(x)$.
By Lemmas~\ref{lem:Del}(iii) and~\ref{lem:MI}(iii),
$\arrow{\pack{g}_{\Del,0}}(E)=\pack{g}_\Del(E)$.
%We are done.
\end{proof}
The main theorem of this section follows.
\begin{thm}\label{thm:lpack}
Let $X,Y$ be metric spaces. For any set $E\subs X\times Y$
\begin{enum}
\item  $\displaystyle\upack{gh}(E)
    \geq\upint\dboxm{g}(E_x)\dd\upack{h}(x)$,
\item  $\displaystyle\lpack{gh}(E)
    \geq\upint\lboxm{g}(E_x)\dd\lpack{h}(x)$,
\item  $\displaystyle\dpack{gh}(E)
    \geq\upint\dboxm{g}(E_x)\dd\lpack{h}(x)$,
\item  $\displaystyle\dpack{gh}(E)
    \geq\inf_{x\in X}\dboxm{g}(E_x)\cdot\dpack{h}(X)$.
\end{enum}
\end{thm}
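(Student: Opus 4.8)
The four inequalities of Theorem~\ref{thm:lpack} are meant to be harvested from Theorem~\ref{thm:intpack} by specializing and manipulating the scale $\Del$, so the plan is to treat each item as a corollary of the $\Del$-scaled inequality $\pack{gh}_{\Del}(E)\geq\upint\dboxm{g}(E_x)\dd\pack{h}_{\Del}(x)$, together with the comparison lemmas (Lemma~\ref{basic}) and the definitions of the upper/lower/directed measures as suprema/infima of the scaled ones. For (i): the upper measures correspond to the largest scale $\Del=(0,\infty)$, so I would simply invoke Theorem~\ref{thm:intpack} with $\Del=(0,\infty)$; since $\pack{gh}_{(0,\infty)}=\upack{gh}$ and $\pack{h}_{(0,\infty)}=\upack{h}$ and, on cross sections, $\lboxm g_0$-based directed measure is still $\dboxm g$, this is essentially a rewrite. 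The only care needed is that $\dboxm{g}$ in the integrand is the one built from $\lboxm g_0=\boxm g_0$, which is scale-independent, so nothing changes there.

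For (ii) and (iii): here I want to pass to the infimum over scales on the left-hand factor and on the measure $\pack{h}$. The subtle point is that $\lpack{gh}(E)=\widehat{\lpack{gh}_0}(E)$ with $\lpack{gh}_0=\inf_\Del\pack{gh}_{\Del,0}$, and likewise $\lpack h=\widehat{\inf_\Del\pack h_{\Del,0}}$; so I would fix a scale $\Del$, apply Lemma~\ref{lem:delta0} (not Theorem~\ref{thm:intpack}) at the level of pre-measures to get $\pack{gh}_{\Del,0}(E)\geq\upint\lboxm g_0(E_x)\dd\pack h_{\Del}(x)\geq\upint\lboxm g_0(E_x)\dd\lpack h(x)$, where the last step uses $\pack h_\Del\geq\lpack h$ and monotonicity of the upper integral in the measure. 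Taking the infimum over $\Del$ on the left gives $\lpack{gh}_0(E)\geq\upint\lboxm g_0(E_x)\dd\lpack h(x)$; then I run the Method~I / Levi's-theorem argument from the proof of Theorem~\ref{thm:intpack} — replace $E$ by $E_n\upto E$, use monotone convergence, and note $\widehat{\lboxm g_0}=\lboxm g$ and that $(E_n)_x\upto E_x$ — to upgrade $\lboxm g_0$ to $\lboxm g$ and $\lpack{gh}_0$ to $\widehat{\lpack{gh}_0}=\lpack{gh}$, yielding (ii). For (iii), the same scheme but keeping $\arrow{(\cdot)}$ on the $gh$-side: from $\pack{gh}_{\Del,0}(E)\geq\upint\lboxm g_0(E_x)\dd\lpack h(x)$ for every scale $\Del$, one wants $\arrow{\lpack{gh}_0}(E)=\dpack{gh}(E)\geq\upint\dboxm g(E_x)\dd\lpack h(x)$; this again goes through $E_n\upto E$ and Levi, exactly as in Theorem~\ref{thm:intpack}, but now the left side is handled via \emph{Method~D} applied to $\lpack{gh}_0$ rather than Method~I applied to $\pack{gh}_{\Del,0}$.

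For (iv): this is the "minimal-cross-section" consequence, analogous to Lemma~\ref{lem:0-0}. I would deduce it from (iii) by bounding $\upint\dboxm g(E_x)\dd\lpack h(x)\geq\inf_{x\in X}\dboxm g(E_x)\cdot\lpack h(X)$ — that is immediate since the integrand is $\geq\inf_x\dboxm g(E_x)$ everywhere — but the stated right-hand side has $\dpack h(X)$, not $\lpack h(X)$, which is \emph{larger}; so the crude bound from (iii) is not enough and I must instead redo the argument directly at the pre-measure level keeping the directed structure on the $Y$-factor. Concretely: fix $c<\inf_x\dboxm g(E_x)$ and $d<\dpack h(X)=\arrow{\lpack h_0}(X)$; the latter gives $X_n\upto X$ with $\lpack h_0(X_n)>d$ for large $n$, hence (by Lemma~\ref{basic}(i)) a scale $\Del$ with arbitrarily large $\Del$-valued packings of $X_n$ of $h$-content $>d$; combine with uniform $\del$-fine packings in the cross sections as in Lemma~\ref{lem:0-0} to produce packings $\sigma$ of $E$ with $gh(\sigma)>cd$, giving $\pack{gh}_{\Del,\del}(E)>cd$; then let $E\leadsto$ the increasing pieces and take Method~D, arriving at $\dpack{gh}(E)\geq cd$. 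Letting $c\upto\inf_x\dboxm g(E_x)$ and $d\upto\dpack h(X)$ finishes (iv).

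\textbf{Main obstacle.} The delicate point throughout is the bookkeeping of \emph{which} construction (Method~I, Method~D, or a fixed scale) sits on each factor, and making sure the monotone-convergence upgrade from the "$0$"-pre-measures to the full measures is legitimate when it is interleaved with an infimum over scales; in particular for (iv) one cannot simply quote (iii) because $\dpack h(X)\geq\lpack h(X)$ in the wrong direction, so the argument must be rebuilt from the packing-counting lemma with the directed cover of $X$ carried along explicitly.
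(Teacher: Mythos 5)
Your overall route is the paper's: (i) is Theorem~\ref{thm:intpack} specialized to $\Del=(0,\infty)$, and (ii)--(iv) are to be extracted from Lemma~\ref{lem:delta0} by taking infima over scales (first on the right, using $\pack{h}_\Del\geq\lpack{h}$ and monotonicity of the upper integral in the measure, then on the left) and then running the appropriate cover construction; your (iii) matches the paper's proof. But your description of (ii) contains a step that fails as written: you propose to finish by "replacing $E$ by $E_n\upto E$ and using monotone convergence," whereas $\lpack{gh}=\widehat{\lpack{gh}_0}$ is a Method~I infimum over \emph{arbitrary} countable covers with \emph{sums}, not an infimum over increasing covers with suprema. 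Taken literally, your argument bounds $\arrow{\lpack{gh}_0}(E)=\dpack{gh}(E)$ from below, and since $\lpack{gh}\leq\dpack{gh}$ this does not yield (ii). The correct step (and the paper's) is: for $E\subs\bigcup_nE_n$, sum the pre-measure inequalities, use $\sum_n\upint f_n\dd\mu\geq\upint\sum_nf_n\dd\mu$, and observe that $\sum_n\lboxm{g}_0\bigl((E_n)_x\bigr)\geq\lboxm{g}(E_x)$ because $\{(E_n)_x\}$ is a countable cover of $E_x$.

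For (iv) you correctly diagnose the trap — (iii) gives only $\lpack{h}(X)$, and $\lpack{h}\leq\dpack{h}$ goes the wrong way — and correctly propose rebuilding the counting argument of Lemma~\ref{lem:0-0}; this is what the paper's laconic "in the same manner" intends. However, your quantifiers on $d<\dpack{h}(X)$ are reversed: $\dpack{h}(X)>d$ asserts that \emph{every} increasing cover of $X$ has a piece with $\lpack{h}_0>d$; you cannot pick a convenient $X_n\upto X$. The increasing cover to which this must be applied is the one forced by the data: given $E_n\upto E$ and $c<\inf_x\dboxm{g}(E_x)$, set $B_{n,m}=\{x:C_\del((E_n)_x)\,g(\del)>c\text{ for all }\del<1/m\}$, note these increase in both indices and exhaust $X$, and diagonalize to $B_{k,k}\upto X$; then some $k$ has $\lpack{h}_0(B_{k,k})>d$, i.e.\ $\pack{h}_{\Del,0}(B_{k,k})>d$ for \emph{every} scale $\Del$ — not "a scale $\Del$" as you write; you need all of them, since the left-hand side is eventually an infimum over scales. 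The packing-combination step then gives $\pack{gh}_{\Del,0}(E_k)\geq cd$ for every $\Del$, hence $\sup_n\lpack{gh}_0(E_n)\geq cd$, and Method~D finishes. With these two repairs your plan coincides with the paper's proof.
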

\begin{proof}
(i) is a particular case of the above theorem with $\Del=(0,\infty)$.

(ii):
Let $E\subs\bigcup_nE_n$.
Use Lemma~\ref{lem:delta0} for each $n$ and take infima over all scales,
first on the right and then on the left, to get
$\lpack{gh}_0(E_n)\geq\upint\lboxm{g}_0(E_n)_x\dd\lpack{h}(x)$.
Thus by Lebesgue Theorem
$$
  \sum_n\lpack{gh}_0(E_n)\geq
    \int^*\sum_n\lboxm{g}_0(E_n)_x\dd\lpack{h}(x)\geq
    \upint\lboxm{g}(E)_x\dd\lpack{h}(x).
$$
Take the infimum over all sequences $E_n$ such that $E\subs\bigcup_nE_n$
to get the required inequality.

(iii):
Let $E_n\upto E$. As above,
$\lpack{gh}_0(E_n)\geq\upint\lboxm{g}_0(E_n)_x\dd\lpack{h}(x)$.
Now proceed as in~the proof of Theorem~\ref{thm:intpack}, using
Levi's monotone convergence theorem.
(iv) can be proved in the same manner.
\end{proof}

Letting $E=X\times Y$, we get the following estimates for
cartesian rectangles.
The last inequality follows by analysis of the proof of Lemma~\ref{lem:0-0}.
\begin{coro}\label{int:rec2}
For any metric spaces $X,Y$
\begin{enum}
\item  $\upack{gh}(X\times Y)\geq \upack{h}(X)\cdot\dboxm{g}(Y)$,
\item  $\dpack{gh}(X\times Y)\geq \dpack{h}(X)\cdot\dboxm{g}(Y)$,
\item  $\lpack{gh}(X\times Y)\geq \lpack{h}(X)\cdot\lboxm{g}(Y)$,
\item  $\lpack{gh}_0(X\times Y)\geq \lpack{h}_0(X)\cdot\lboxm{g}_0(Y)$.
\end{enum}
\end{coro}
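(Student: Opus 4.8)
The plan is to obtain (i)--(iii) by specializing Theorem~\ref{thm:lpack} to $E=X\times Y$, and to handle (iv) by re-examining the proof of Lemma~\ref{lem:0-0}. Set $E=X\times Y$; then $(E)_x=Y$ for every $x\in X$, so each cross-section functional occurring in Theorem~\ref{thm:lpack} is a constant function of $x$. I would use the elementary identity $\upint c\dd\mu=c\,\mu(X)$, valid for any constant $c$ and any Borel measure $\mu$ (the constant $c$ is an admissible competitor in the upper integral, and every Borel $\phi\geq c$ satisfies $\int\phi\dd\mu\geq c\,\mu(X)$). With this, (i) is Theorem~\ref{thm:lpack}(i) with $\mu=\upack{h}$, (iii) is Theorem~\ref{thm:lpack}(ii) with $\mu=\lpack{h}$, and (ii) follows from Theorem~\ref{thm:lpack}(iv), whose right-hand side becomes $\inf_{x\in X}\dboxm{g}(Y)\cdot\dpack{h}(X)=\dboxm{g}(Y)\cdot\dpack{h}(X)$. (Note that Theorem~\ref{thm:lpack}(iii) would only give the weaker bound $\dboxm{g}(Y)\cdot\lpack{h}(X)$, since $\lpack{h}\leq\dpack{h}$; so (ii) genuinely requires part (iv).)

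For (iv) the pre-measure version is \emph{not} a formal corollary of Lemma~\ref{lem:0-0}, because that lemma carries the \emph{measure} $\pack{h}_{\Del}$---not the pre-measure $\pack{h}_{\Del,0}$---on the $X$-factor, and one only has $\pack{h}_{\Del}\leq\pack{h}_{\Del,0}$. The point is that the passage to the increasing sets $B_n$ in the proof of Lemma~\ref{lem:0-0}, which is exactly what forces the measure $\pack{h}_{\Del}$ there, becomes vacuous for a rectangle, since $x\mapsto\lboxm{g}_0((E)_x)$ is then constant. So I would re-run that construction directly: fix a scale $\Del$, pick $c<\lboxm{g}_0(Y)$ and $d<\pack{h}_{\Del,0}(X)$. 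Since $\lboxm{g}_0(Y)=\liminf_{\del\to0}C_\del(Y)g(\del)>c$, there is $n$ with $C_\del(Y)g(\del)>c$ for all $\del<\tfrac1n$; and since $\pack{h}_{\Del,\del}(X)\geq\pack{h}_{\Del,0}(X)>d$ for every $\del>0$, for each $\del<\tfrac1n$ there is a $(\Del,\del)$-packing $\pi=\{(x_i,r_i):i\in I\}$ of $X$ with $h(\pi)>d$. For each $i$, a subset of $Y$ of $\gap>r_i$ and cardinality $C_{r_i}(Y)$ (or arbitrarily large, should $C_{r_i}(Y)=\infty$) yields a uniform $r_i$-fine packing $\pi_i=\{(y_{ij},r_i):j\in K_i\}$ of $Y$ with $\card{K_i}g(r_i)>c$, and then $\sigma=\{((x_i,y_{ij}),r_i):i\in I,\ j\in K_i\}$ is a $(\Del,\del)$-packing of $X\times Y$ with
\[
  gh(\sigma)=\sum_{i\in I}\card{K_i}\,g(r_i)h(r_i)>c\sum_{i\in I}h(r_i)>cd .
\]
Hence $\pack{gh}_{\Del,\del}(X\times Y)>cd$ for all $\del<\tfrac1n$, and since $\pack{gh}_{\Del,0}=\inf_{\del>0}\pack{gh}_{\Del,\del}$ is a non-decreasing limit in $\del$, this gives $\pack{gh}_{\Del,0}(X\times Y)\geq cd$. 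Letting $c\upto\lboxm{g}_0(Y)$ and $d\upto\pack{h}_{\Del,0}(X)$ yields $\pack{gh}_{\Del,0}(X\times Y)\geq\pack{h}_{\Del,0}(X)\cdot\lboxm{g}_0(Y)$ for every scale $\Del$; taking the infimum over $\Del$ and pulling out the fixed factor $\lboxm{g}_0(Y)$ gives $\lpack{gh}_0(X\times Y)\geq\lpack{h}_0(X)\cdot\lboxm{g}_0(Y)$, which is (iv).

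The only real obstacle is thus (iv): one has to recognize that Lemma~\ref{lem:0-0} as stated is a notch too weak (measure versus pre-measure on the base) and that the rectangle case is precisely where the exhaustion step of that proof collapses, allowing the pre-measure $\pack{h}_{\Del,0}(X)$ to be used throughout; parts (i)--(iii) are then immediate bookkeeping once the identity $\upint c\dd\mu=c\,\mu(X)$ for constants is recorded. Edge cases involving $0$ and $\infty$ are dealt with by the usual conventions, e.g.\ if $\lboxm{g}_0(Y)=\infty$ and $\lpack{h}_0(X)>0$ one sends $c\to\infty$ in the per-scale inequality to conclude $\lpack{gh}_0(X\times Y)=\infty$.
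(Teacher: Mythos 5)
Your proposal is correct and follows exactly the route the paper intends: parts (i)--(iii) are Theorem~\ref{thm:lpack} specialized to $E=X\times Y$ (with (ii) coming from Theorem~\ref{thm:lpack}(iv), as you rightly note), and part (iv) is obtained by rerunning the construction in the proof of Lemma~\ref{lem:0-0}, where the exhaustion by the sets $B_n$ becomes unnecessary for a rectangle so that the pre-measure $\pack{h}_{\Del,0}(X)$ can replace the measure $\pack{h}_{\Del}(X)$ before taking the infimum over scales. This is precisely what the paper means by ``the last inequality follows by analysis of the proof of Lemma~\ref{lem:0-0}'', and your write-up supplies the details it omits.
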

A number of consequences can be derived from these theorems. As a sample we prove
an estimate of packing measure of a domain of a Lipschitz mapping, similar to
\cite[7.7]{MR1333890}.
\begin{coro}\label{coroLip}
Let $X,Y$ be metric spaces and $c\geq1$. Let $f:X\to Y$ be a $c$-Lipschitz map.
For any $s,t\geq0$
%$$
%  \upint\dboxm{t}\left(f^{-1}(y)\right)\dd\upack{s}(y)\leq
%  \max(1,c^{s+t})\,\upack{s+t}(X).
%$$
\begin{align*}
  \upint\dboxm{t}\left(f^{-1}(y)\right)\dd\upack{s}(y)&\leq c^{s+t}\upack{s+t}(X),\\
  \upint\lboxm{t}\left(f^{-1}(y)\right)\dd\lpack{s}(y)&\leq c^{s+t}\lpack{s+t}(X).
\end{align*}
\end{coro}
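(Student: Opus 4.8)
The plan is to reduce both inequalities to Theorem~\ref{thm:lpack} by realizing the fibers $f^{-1}(y)$ as the cross-sections of the reversed graph of $f$. Concretely, I would set $E=\{(f(x),x):x\in X\}\subs Y\times X$ and consider the map $\phi\colon X\to Y\times X$ given by $\phi(x)=(f(x),x)$. Then $\phi$ is a bijection of $X$ onto $E$, and for every $y\in Y$ the cross-section $E_y=\{x\in X:(y,x)\in E\}$ is exactly $f^{-1}(y)$.

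First I would check that $\phi$ is $c$-Lipschitz. Since $Y\times X$ carries the maximum metric and $c\geq1$,
$$
  d\bigl(\phi(x),\phi(x')\bigr)=\max\bigl\{d(f(x),f(x')),\,d(x,x')\bigr\}\leq\max\{c,1\}\,d(x,x')=c\,d(x,x').
$$
Hence Lemma~\ref{lipp}, applied with the exponent $s+t$ in place of $s$, gives $\upack{s+t}(E)=\upack{s+t}(\phi(X))\leq c^{s+t}\upack{s+t}(X)$ and, likewise, $\lpack{s+t}(E)\leq c^{s+t}\lpack{s+t}(X)$. This is the only step in which the hypothesis $c\geq1$ is actually used.

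Next I would apply Theorem~\ref{thm:lpack} to the set $E\subs Y\times X$, so that the roles of the two factors $X,Y$ appearing in that theorem are played here by $Y$ and $X$ respectively, with Hausdorff functions $g(r)=r^t$ and $h(r)=r^s$, noting $gh(r)=r^{s+t}$. Part~(i) then yields
$$
  c^{s+t}\upack{s+t}(X)\geq\upack{s+t}(E)=\upack{gh}(E)\geq\upint\dboxm{t}(E_y)\dd\upack{s}(y)=\upint\dboxm{t}\bigl(f^{-1}(y)\bigr)\dd\upack{s}(y),
$$
which is the first displayed inequality, and part~(ii) gives in the same way $c^{s+t}\lpack{s+t}(X)\geq\lpack{s+t}(E)\geq\upint\lboxm{t}\bigl(f^{-1}(y)\bigr)\dd\lpack{s}(y)$, the second one.

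I do not expect a serious obstacle: all measurability issues are already absorbed into the upper integrals in the statement of Theorem~\ref{thm:lpack}, so the content of the corollary is entirely packaged in that theorem together with Lemma~\ref{lipp}. The only points demanding a little care are the bookkeeping of which factor plays which role when invoking Theorem~\ref{thm:lpack} for $Y\times X$ rather than $X\times Y$, and the (routine) verification that the reversed graph map is $c$-Lipschitz for the maximum metric — the place where $c\geq1$ enters.
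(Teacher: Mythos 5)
Your proof is correct and follows essentially the same route as the paper: the paper uses the graph $\{(x,f(x))\}\subs X\times Y$ and switches the roles of the factors when invoking the product-integral inequality, whereas you build the reversed graph in $Y\times X$ directly — a purely cosmetic difference. The key ingredients (the $c$-Lipschitz graph map via the maximum metric and $c\geq1$, Lemma~\ref{lipp}, and Theorem~\ref{thm:lpack}) are exactly those of the paper's argument.
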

\begin{proof}
Let $E=\{(x,f(x)):x\in X\}\subs X\times Y$ be the graph of $f$.
Switching the roles of $X$ and $Y$, Theorem~\ref{thm:intpack}
yields
$
\upint\dboxm{t}\left(f^{-1}(y)\right)\dd\upack{s}(y)\leq\upack{s+t}(E).
$
Since the mapping $x\mapsto(x,f(x))$ is $c$-Lipschitz,
Lemma~\ref{lipp} yields $\upack{s+t}(E)\leq c^{s+t}\upack{s+t}(X)$.
The second inequality is proved the same way.
\end{proof}

\begin{rem}
All of the inequalities of this section remain true if all $\pack{}$'s
are replaced with $\boxm{}$'s, with the same proofs, one only has to use
uniform packings in place of packings.
In particular, Theorem~\ref{thm:lpack} reads
\begin{thm}\label{thm:box}
Let $X,Y$ be metric spaces. For any set $E\subs X\times Y$
\begin{enum}
\item  $\displaystyle\uboxm{gh}(E)
    \geq\upint\dboxm{g}(E_x)\dd\uboxm{h}(x)$,
\item  $\displaystyle\lboxm{gh}(E)
    \geq\upint\lboxm{g}(E_x)\dd\lboxm{h}(x)$,
\item  $\displaystyle\dboxm{gh}(E)
    \geq\upint\dboxm{g}(E_x)\dd\lboxm{h}(x)$,
\item  $\displaystyle\dboxm{gh}(E)
    \geq\inf_{x\in X}\dboxm{g}(E_x)\cdot\dboxm{h}(X)$.
\end{enum}
\end{thm}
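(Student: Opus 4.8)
The plan is to replay the entire chain Lemma~\ref{lem:0-0} $\to$ Lemma~\ref{lem:cpt} $\to$ Lemma~\ref{lem:delta0} $\to$ Theorem~\ref{thm:intpack} $\to$ Theorem~\ref{thm:lpack}, with the packing (pre-)measures $\pack{g}_{\Del,0},\pack{g}_\Del,\upack{g},\lpack{g},\dpack{g}$ replaced throughout by the box (pre-)measures $\boxm{g}_{\Del,0},\boxm{g}_\Del,\uboxm{g},\lboxm{g},\dboxm{g}$, and every packing replaced by a uniform packing; the ``inner'' factor $\lboxm{g}_0$ on the right-hand sides is untouched, since it is already the Hewitt--Stromberg pre-measure. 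All the structural inputs the original proofs use have box counterparts at hand: $\boxm{g}_{\Del,0}$ is a subadditive metric pre-measure and $\boxm{g}_\Del$ a Borel-regular outer measure by Lemma~\ref{lem:Del}; $\arrow{\boxm{g}_{\Del,0}}=\widehat{\boxm{g}_{\Del,0}}=\boxm{g}_\Del$ by Lemmas~\ref{lem:MI}(iii) and~\ref{lem:Del}(i); $\boxm{gh}_{\Del,0}(E)<\infty$ forces $E$ totally bounded, since by~\eqref{capdel} it gives $C_\del(E)<\infty$ for all $\del>0$, exactly as in Lemma~\ref{basic0}(iv); $\lboxm{gh}_0=\inf_\Del\boxm{gh}_{\Del,0}$ and $\dboxm{gh}=\arrow{\lboxm{gh}_0}=\inf_\Del\boxm{gh}_\Del$ by the definition of the lower box pre-measure and Lemma~\ref{basic}(iv); and $\boxm{gh}_\Del$, being a finite Borel-regular outer measure on a compact space, is inner regular by compacta. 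With these in place, the measurability argument of Lemma~\ref{lem:cpt} (which only involves $x\mapsto C_\del(E_x)$ and $g$, hence is insensitive to the packing/uniform distinction), the reduction to the compact case in Lemma~\ref{lem:delta0}, and the Levi/Lebesgue passages of Theorems~\ref{thm:intpack} and~\ref{thm:lpack} all go through verbatim.

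The one place that genuinely needs attention is the box analogue of Lemma~\ref{lem:0-0}, namely
\[
  \boxm{gh}_{\Del,0}(E)\geq\boxm{h}_\Del(X)\cdot\inf_{x\in X}\lboxm{g}_0(E_x),
\]
because the ``product packing'' $\sigma$ assembled there must now be \emph{uniform}. Here is how I would handle it. Fix $c<\inf_{x}\lboxm{g}_0(E_x)$; since $\lboxm{g}_0(E_x)=\liminf_{r\to0}C_r(E_x)g(r)$ by~\eqref{capac2}, the sets $B_n=\{x:C_r(E_x)g(r)>c\text{ for all }r<\tfrac1n\}$ satisfy $B_n\upto X$. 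For $d<\boxm{h}_\Del(X)$, Lemmas~\ref{lem:Del}(iii) and~\ref{lem:MI}(iii) yield $n$ with $\boxm{h}_{\Del,0}(B_n)>d$, hence a $\del_0>0$ such that for every $\del<\del_0$ there is a uniform $(\Del,\del)$-packing $\pi=\{(x_i,r):i\in I\}$ of $B_n$ (all radii equal to a common $r\leq\del$) with $h(\pi)=\card{I}\,h(r)>d$; shrink $\del_0$ so that $\del_0<\tfrac1n$. For each $i$, as $x_i\in B_n$ and $r<\tfrac1n$, we get $C_r(E_{x_i})g(r)>c$, so there is $F_i\subs E_{x_i}$ with $\gap F_i>r$ and $\card{F_i}\,g(r)>c$; then $\pi_i=\{(y,r):y\in F_i\}$ is a uniform $\del$-fine packing of $E_{x_i}$. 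Since \emph{every} radius involved equals $r$, the collection $\sigma=\{((x_i,y),r):i\in I,\ y\in F_i\}$ is a uniform $(\Del,\del)$-packing of $E$, and
\[
  gh(\sigma)=\sum_{i\in I}\card{F_i}\,g(r)h(r)>c\sum_{i\in I}h(r)=c\,h(\pi)>cd,
\]
so $\boxm{gh}_{\Del,\del}(E)>cd$; letting $\del\to0$, $d\upto\boxm{h}_\Del(X)$ and $c\upto\inf_x\lboxm{g}_0(E_x)$ gives the claim.

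With the box version of Lemma~\ref{lem:0-0} secured, the box analogues of Lemma~\ref{lem:cpt} (for compact $E$: Borel measurability of $x\mapsto\lboxm{g}_0(E_x)$ together with $\boxm{gh}_{\Del,0}(E)\geq\int\lboxm{g}_0(E_x)\dd\boxm{h}_\Del(x)$), of Lemma~\ref{lem:delta0} ($\boxm{gh}_{\Del,0}(E)\geq\upint\lboxm{g}_0(E_x)\dd\boxm{h}_\Del(x)$ for arbitrary $E$), and of Theorem~\ref{thm:intpack} ($\boxm{gh}_\Del(E)\geq\upint\dboxm{g}(E_x)\dd\boxm{h}_\Del(x)$) follow by the original proofs word for word, and then Theorem~\ref{thm:box}(i)--(iv) are obtained exactly as Theorem~\ref{thm:lpack}(i)--(iv): (i) is the case $\Del=(0,\infty)$ of the box analogue of Theorem~\ref{thm:intpack}; (ii) follows from the box analogue of Lemma~\ref{lem:delta0} by taking infima over all scales (first on the right, then on the left) and applying Lebesgue's theorem over a countable cover of $E$; (iii) and (iv) follow from the same inputs together with Lemma~\ref{lem:0-0}'s box version by taking infima over scales and applying Levi's monotone convergence theorem over an increasing sequence $E_n\upto E$, using $\arrow{\lboxm{gh}_0}=\dboxm{gh}$. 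The only real obstacle is thus the uniformity bookkeeping in the box analogue of Lemma~\ref{lem:0-0} above; once that is in hand, nothing else in the section is sensitive to replacing packings by uniform packings.
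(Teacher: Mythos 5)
Your proposal is correct and takes essentially the same route as the paper: the paper's own ``proof'' of Theorem~\ref{thm:box} is just the remark that every inequality of the section survives replacing packings by uniform packings, and you have fleshed out precisely the one step where uniformity needs care --- the box analogue of Lemma~\ref{lem:0-0}, where taking the base packing uniform of radius $r$ and the fibre packings at that same radius $r$ makes the product packing uniform. The rest of your replay of the chain through Lemmas~\ref{lem:cpt} and~\ref{lem:delta0} and Theorems~\ref{thm:intpack} and~\ref{thm:lpack} matches the paper's intent word for word.
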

\end{rem}

%%%%%%%%%%%%%%%%%%%%%%%%%%%%%%%%%%%%%%%%%%%%%%%%%%%%%%%%%%%%%%%
\section{Packing dimensions on cartesian products}
\label{sec:dim}
%%%%%%%%%%%%%%%%%%%%%%%%%%%%%%%%%%%%%%%%%%%%%%%%%%%%%%%%%%%%%%%

In this section we interpret the inequalities of the previous section
in terms of fractal dimensions. We first recall the dimensions and introduce
a new one related to the pre-measures $\dboxm s$ and $\dpack s$.
General reference:~\cite{MR1333890}.

Fix $E\subs X$. A family $\mathcal C$ of sets is a
\emph{$\del$-cover of $E$} if
it covers $E$ and $\diam C\leqslant\del$ for each $C\in\mathcal C$.
In this section we shall make frequent use of the
\emph{covering number function}
\begin{equation}\label{NNN}
  N_\del(E)=\min\{\card{\mathcal C}:\mathcal C
  \text{ is a $\del$-cover of $E$}\}, \quad \del>0.
\end{equation}

The well-known \emph{lower} and \emph{upper box dimensions},
(also called \emph{box-counting} or \emph{Minkowski})
of a nonempty set $E\subs X$ are equivalently defined, respectively, by
\begin{align*}
  \lbdim E & =
    \liminf_{\del\to 0}\frac{\log N_\del(E)}{\abs{\log r}}=
    \liminf_{\del\to 0}\frac{\log C_\del(E)}{\abs{\log r}},\\
  \ubdim E  &=
    \limsup_{\del\to 0}\frac{\log N_\del(E)}{\abs{\log r}}=
    \limsup_{\del\to 0}\frac{\log C_\del(E)}{\abs{\log r}}.
\end{align*}
Since
\begin{equation}\label{CxN}
  N_{2\del}(E)\leq C_\del(E)\leq N_\del(E)
\end{equation}
for any set $E$, the limits in these definitions indeed equal.
The upper and lower packing dimensions are, respectively, defined by,
cf.~\cite{MR1333890},
\begin{align*}
  \updim E&=\inf\{\sup_n\ubdim E_n:E\subs\bigcup_n E_n\},\\
  \lpdim E&=\inf\{\sup_n\lbdim E_n:E\subs\bigcup_n E_n\}.
\end{align*}
It is easy to check that the upper packing dimension may be equivalently defined
by $\updim E=\inf\{\sup_n\ubdim E_n:E_n\upto E\}$. However, this modification
of lower packing dimension gives a rise to a new dimension:
\begin{defn}\label{def:dpdim}
$\dpdim E=\inf\{\sup_n\lbdim E_n:E_n\upto E\}$
\end{defn}
It is clear that $ \lpdim X\leq \dpdim X\leq \lbdim X$ for any set $X$.
The following example shows that the three dimensions are distinct:
There is a compact set $X\subs\Rset$ such that $\lpdim X<\dpdim X<\lbdim X$:
\begin{ex}\label{ex:3x}
We will define three sets compact $K_0,K_1,E\subs\Rset$ such that
\begin{enum}
  \item $\lbdim K_0=\lbdim K_1=0$,
  \item $\dpdim K_0\cup K_1=\frac12$,
  \item $\lbdim E=1$,
  \item $E$ is countable.
\end{enum}
The required set is $X=K_0\cup K_1\cup E$. Indeed, (i) and (iv) imply
$\lpdim X=0$, (ii) and (iv) imply $\dpdim X=\frac12$ and (iii) implies $\lbdim X=1$.

To define the sets $K_0$ and $K_1$ consider the set $\ctree$ of all binary sequences
and also
the corresponding tree $\tree$ of finite binary sequences, and the canonical
mapping of $\ctree$ onto $[0,1]$ given by $\widehat x=\sum_{n\in\nset}2^{-n-1}x(n)$.
For $p\in\tree$ let $[p]=\{x\in\ctree:p\subs x\}$ be the cone determined by $p$.
It is clear that $C_p=\{\widehat x:x\in[p]\}$ is a closed binary interval of length
$2^{-\abs p}$.

Choose an infinite set $D\subs\nset$ such that
\begin{equation}\label{eq:limits}
\varliminf_{n\to\infty}\frac{\card{D\cap n}}{n}=0 \text{ and }
\varlimsup_{n\to\infty}\frac{\card{D\cap n}}{n}=1
\end{equation}
and set
\begin{align*}
  K_0&=\{\widehat x:x(n)=0\text{ for all $n\in D$}\},\\
  K_1&=\{\widehat x:x(n)=0\text{ for all $n\notin D$}\}.
\end{align*}
Let $n\in\nset$. There are exactly $2^{\abs{n\setminus D}}$ binary intervals
of length $2^{-n}$ that meet $K_0$. Hence
\begin{equation}\label{eq:limits2}
N_{2^{-n}}(K_0)=2^{\abs{n\setminus D}}
\end{equation}
and likewise
\begin{equation}\label{eq:limits3}
N_{2^{-n}}(K_1)=2^{\abs{n\cap D}}.
\end{equation}
Therefore~\eqref{eq:limits} yields
$$
  \lbdim K_0=
  \varliminf_{n\to\infty}\frac{\log N_{2^{-n}}(K_0)}{\abs{\log2^{-n}}}=
  \varliminf_{n\to\infty}\frac{\abs{n\setminus D}}{n}=
  %\varliminf_{n\to\infty}\frac{n-\abs{n\cap D}}{n}=
  1-\varlimsup_{n\to\infty}\frac{\abs{n\cap D}}{n}=0
$$
and likewise
$$
  \lbdim K_1=
  \varliminf_{n\to\infty}\frac{\log N_{2^{-n}}(K_1)}{\abs{\log2^{-n}}}=
  \varliminf_{n\to\infty}\frac{\abs{n\cap D}}{n}=0.
$$
Thus the sets $K_0,K_1$ satisfy (i). To show (ii), we first
claim that there is an infinite set $F\subs\nset$ such that
\begin{equation}\label{eq:limits999}
  \tfrac{n-1}{2}<\abs{n\cap D}\leq\tfrac n2.
\end{equation}
Indeed, \eqref{eq:limits} yields $n$ arbitrarily large such that
$\frac{\abs{D\cap n}}{n}\leq\frac12$ but $\frac{\abs{D\cap (n+1)}}{n+1}>\frac12$. Hence
$\abs{D\cap (n+1)}=\abs{D\cap n}+1$ and the two inequalities imply~\eqref{eq:limits999}.

Using~\eqref{eq:limits2}, \eqref{eq:limits3} and~\eqref{eq:limits999} it follows that
\begin{align*}
  \lbdim K_0\cup K_1&\leq
  \varliminf_{n\to\infty}\frac{\log(2^{\abs{n\setminus D}}+2^{\abs{n\setminus D}})}
  {\abs{\log2^{-n}}}\\
  &\leq
  \varliminf_{n\in F}\frac{\log(2^{(n+1)/2}+2^{n/2})}{n}\leq
  \varliminf_{n\in F}\frac{\log2^{1+(n+1)/2}}{n}=\frac12
\end{align*}
and in particular $\dpdim K_0\cup K_1\leq\frac12$.

To prove the opposite inequality suppose for contrary that $X_n\upto K_0\cup K_1$
are such that $\lbdim X_n<\frac12$ for all $n$.
With no harm done we may suppose $X_n$'s closed.
By the Baire category argument there is an open set $U$ that meets both
$K_0$ and $K_1$ and $\lbdim U\cap(K_0\cup K_1)<\frac12$.
Suppose without loss of generality that $U=I_0\cup I_1$, where $I_0$ meets $K_0$,
$I_1$ meets $K_1$ and $I_0$ and $I_1$ are non-overlapping binary intervals
of the same length, say $2^{-m}$.
If $n>m$, then the number of binary intervals of length $2^{-n}$ that meet
$I_0\cap K_0$ ($I_1\cap K_1$, respectively) is exactly $2^{\abs{A_n}}$
($2^{\abs{B_n}}$), where
$A_n=\{i\in\nset\setminus D:m\leq i<n\}$ and $B_n=\{i\in\nset\cap D:m\leq i<n\}$.
Since $\abs{A_n\cup B_n}=n-m$, we have $\max(\abs{A_n},\abs{B_n})\geq\frac{n-m}{2}$.
Thus $N_{2^{-n}}((K_0\cup K_1)\cap U)\geq2^{(n-m)/2}$, which in turn yields
$\lbdim(K_0\cup K_1)\cap U\geq\frac12$: the desired contradiction. We conclude that
$\dpdim K_0\cup K_1\geq\frac12$.
Thus (ii) holds.

Finally let $E=\bigl\{\frac{1}{\log n}:n\in\nset,n\geq2\bigr\}\cup\{0\}$.
Routine calculation proves that $\lbdim E=1$. Thus (iii) and (iv) hold.
\qed
\end{ex}
Let us now see how the dimensions we described are related to the measures and
pre-measures defined in the previous section.
It is easy to check that all of the measures
$\upack{s},\uboxm{s},\lpack{s},\lboxm{s}$ and the pre-measures
$\upack{s}_0,\uboxm{s}_0,\lpack{s}_0,\lboxm{s}_0,\dpack{s},\dboxm{s}$
are ``rarefaction indices'': If $\mathscr{L}^s$ is any of them, then
$$
  \inf\{s:\mathscr{L}^s(E)=0\}=\sup\{s:\mathscr{L}^s(E)=\infty\}.
$$
Each of these (pre)-measures is linked to one of the above fractal dimensions
by a common pattern:
Tricot~\cite{MR633256} proved that
$\ubdim E=\inf\{s:\upack{s}_0(E)=0\}=\inf\{s:\uboxm{s}_0(E)=0\}$ and also
that $\updim E=\inf\{s:\upack{s}(E)=0\}=\inf\{s:\uboxm{s}(E)=0\}$.
It is folklore (and very easy to prove) that
$\lbdim E=\inf\{s:\lboxm{s}_0(E)=0\}$ and
$\lpdim E=\inf\{s:\lboxm{s}(E)=0\}$, cf.~e.g.~\cite{MR1333890}. Combining with
Propositions~\ref{lem:00}---\ref{comp4} yields a list of equivalent definitions
of the dimensions under consideration.
\begin{prop}\label{dim:dp}
For any set $E\subs X$
\begin{enum}
\item $\ubdim E=\inf\{s:\upack{s}_0(E)=0\}=\inf\{s:\uboxm{s}_0(E)=0\}$,
\item $\lbdim E=\inf\{s:\lpack{s}_0(E)=0\}=\inf\{s:\lboxm{s}_0(E)=0\}$,
\item $\updim E=\inf\{s:\upack{s}(E)=0\}=\inf\{s:\uboxm{s}(E)=0\}$,
\item $\lpdim E=\inf\{s:\lpack{s}(E)=0\}=\inf\{s:\lboxm{s}(E)=0\}$,
\item
  $\dpdim E=\inf\{s:\dpack{s}(E)=0\}=\inf\{s:\dboxm{s}(E)=0\}\\
  =\inf\{s:\exists\Del\ \pack{s}_\Del(E)=0\}=
  \inf\{s:\exists\Del\ \boxm{s}_\Del(E)=0\}$.
\end{enum}
\end{prop}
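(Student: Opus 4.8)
The plan is to reduce everything to the two folklore identities $\lbdim E=\inf\{s:\lboxm{s}_0(E)=0\}$ and $\lpdim E=\inf\{s:\lboxm{s}(E)=0\}$ (see e.g.~\cite{MR1333890}) together with Tricot's identities recalled before the statement, which give (i) and (iii) outright; the passage from a measure to its twin is each time governed by one of Propositions~\ref{lem:00}--\ref{comp4}, which say exactly that the twins share a critical exponent. For (ii), Proposition~\ref{lem:00} gives $\lpack{s}_0(E)=0\iff\lboxm{s}_0(E)=0$ for every $s$, so $\inf\{s:\lpack{s}_0(E)=0\}=\inf\{s:\lboxm{s}_0(E)=0\}=\lbdim E$. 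For (iv), the inclusion $\{s:\lpack{s}(E)=0\}\subseteq\{s:\lboxm{s}(E)=0\}$ (immediate from $\lboxm{s}\le\lpack{s}$) gives $\inf\{s:\lpack{s}(E)=0\}\ge\lpdim E$, and for the reverse I would fix $s>\lpdim E$, pick $t\in(\lpdim E,s)$ with $\lboxm{t}(E)=0$, invoke Proposition~\ref{comp4}(ii) to get $\lpack{s}(E)=0$, and let $s\downarrow\lpdim E$.

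The substance is in (v). First I would unwind Definition~\ref{def:dpdim} with the folklore identity for $\lbdim E_n$ and the monotonicity of $s\mapsto\lboxm{s}_0$ to obtain
$$
  \dpdim E=\inf\bigl\{s:\text{there is }E_n\upto E\text{ with }\lboxm{s}_0(E_n)=0\text{ for all }n\bigr\}.\qquad(\star)
$$
Then I would prove $\dpdim E=\inf\{s:\dboxm{s}(E)=0\}$: if $\dboxm{s}(E)=0$ then $\dboxm{s'}(E)=0$ for every $s'>s$, and since $r^{s'}\prec r^s$ the implication (iii)$\Rightarrow$(i) of Proposition~\ref{comp3} (taken with $g=r^s$) furnishes $E_n\upto E$ with $\lboxm{s}_0(E_n)=0$, so $\dpdim E\le s$ by $(\star)$; conversely, if $s>\dpdim E$ then $(\star)$ gives $s'<s$ and $E_n\upto E$ with $\lboxm{s'}_0(E_n)=0$, whence $\dboxm{s'}(E)\le\sup_n\lboxm{s'}_0(E_n)=0$, so $\inf\{u:\dboxm{u}(E)=0\}\le s'<s$.

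The remaining three identities in (v) I would close by a squeeze. Lemma~\ref{basic}(iii),(iv) together with $\boxm{s}_\Del\le\pack{s}_\Del$ and $\dboxm{s}\le\dpack{s}$ yield
$$
  \{s:\exists\Del\ \pack{s}_\Del(E)=0\}\subseteq\{s:\dpack{s}(E)=0\}\subseteq\{s:\dboxm{s}(E)=0\}
$$
and, the same way, $\{s:\exists\Del\ \pack{s}_\Del(E)=0\}\subseteq\{s:\exists\Del\ \boxm{s}_\Del(E)=0\}\subseteq\{s:\dboxm{s}(E)=0\}$; hence all four of these infima lie between $\inf\{s:\dboxm{s}(E)=0\}=\dpdim E$ and $\inf\{s:\exists\Del\ \pack{s}_\Del(E)=0\}$. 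It therefore suffices to bound the latter by $\dpdim E$: given $s>\dpdim E$, pick $s'\in(\dpdim E,s)$, so $\dboxm{s'}(E)=0<\infty$ by the previous paragraph, and apply Proposition~\ref{comp4}(iii) to obtain a scale $\Del$ with $\pack{s}_\Del(E)=0$; letting $s\downarrow\dpdim E$ finishes the proof.

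The point to watch --- and the reason $\dpdim$ needs this detour rather than a direct computation --- is that $\lboxm{s}_0$ (the Hewitt--Stromberg premeasure) is not subadditive, so $\dboxm{s}(E)=0$ does not by itself produce a single increasing chain $E_n\upto E$ with $\lboxm{s}_0(E_n)=0$; one must pass through a gauge $h\prec r^s$ as in Proposition~\ref{comp3}, and the slack this introduces is absorbed only after the infimum over $s$ is taken. Carrying that absorption out correctly and simultaneously for $\dboxm{}$, $\dpack{}$, $\pack{}_\Del$ and $\boxm{}_\Del$ is the one genuinely finicky part; the rest is bookkeeping with monotonicity.
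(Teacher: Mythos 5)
Your overall strategy---reduce (i)--(iv) to the folklore/Tricot identities plus Propositions~\ref{lem:00} and~\ref{comp4}, and handle (v) via the reformulation $(\star)$ together with a squeeze through Lemma~\ref{basic}(iii),(iv) and Proposition~\ref{comp4}(iii)---is exactly the combination the paper has in mind (it offers no written proof beyond ``combining with Propositions~\ref{lem:00}--\ref{comp4}''), and parts (i)--(iv), the identity $(\star)$, the sandwich of the four sets in (v), and the closing application of Proposition~\ref{comp4}(iii) are all sound.

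One step fails as written, namely the half ``$\dboxm{s}(E)=0\Rightarrow\dpdim E\leq s$'' of $\dpdim E=\inf\{s:\dboxm{s}(E)=0\}$. You pass to $s'>s$, note $r^{s'}\prec r^s$, and invoke (iii)$\Rightarrow$(i) of Proposition~\ref{comp3} with $g=r^s$ and witness $h=r^{s'}$ to produce a chain $E_n\upto E$ with $\lboxm{s}_0(E_n)=0$. With the faster-decaying gauge as witness this implication is false: take $E=[0,1]$, $g=r^{1/2}$, $h=r^2$; then $h/g\to0$ and $\dboxm{h}(E)\leq\lboxm{h}_0(E)=\liminf_{\del\to0}C_\del([0,1])\,\del^2=0$, yet no chain $E_n\upto[0,1]$ can satisfy $\lboxm{1/2}_0(E_n)=0$, since by $(\star)$ that would force $\dpdim[0,1]\leq\tfrac12$. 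The mechanism behind (iii)$\Rightarrow$(i) is ``$\liminf C_\del(E_n)h(\del)<\infty$ and $g(\del)/h(\del)\to0$ imply $\liminf C_\del(E_n)g(\del)=0$'', so the witness $h$ must be the \emph{larger} gauge, i.e.\ for powers the \emph{smaller} exponent. (The paper's stated definition of $\prec$ is the reverse of the convention its own proofs of Propositions~\ref{comp2}--\ref{comp4} actually require, which is presumably what misled you---but the direction you chose is the one in which the implication does not hold, and the intermediate object you want, a chain with $\lboxm{s}_0(E_n)=0$ at the exponent $s$ itself, need not exist when $\dpdim E=s$ exactly.) The repair is short and stays within your framework: from $\dboxm{s}(E)=0$ take a chain with $\sup_n\lboxm{s}_0(E_n)<\infty$; for each $u>s$, multiplying by $\del^{u-s}\to0$ along the minimizing sequence gives $\lboxm{u}_0(E_n)=0$ for all $n$, whence $\dpdim E\leq u$ by $(\star)$; now let $u\downarrow s$. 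Everything downstream of this identity in your argument then goes through unchanged.
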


Straightforward application of these identities to
Theorems~\ref{thm:box}, \ref{thm:lpack} and Corollary~\ref{int:rec2} yields the
corresponding dimension inequalities:
\begin{thm}\label{dim:sections}
Let $X,Y$ be metric spaces and $E\subs X\times Y$.
Let $A\subs X$ be a set such that $E_x\neq\emptyset$ for all $x\in A$. Then
\begin{enum}
\item  $\updim E\geq\updim A+\inf_{x\in A}\dpdim E_x$,
\item  $\lpdim E\geq\lpdim A+\inf_{x\in A}\lpdim E_x$,
\item  $\dpdim E\geq\dpdim A+\inf_{x\in A}\dpdim E_x$.
\end{enum}
\end{thm}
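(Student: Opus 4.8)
The plan is to derive Theorem~\ref{dim:sections} directly from the product/section inequalities of Theorem~\ref{thm:lpack} and Theorem~\ref{thm:box} by translating them through the ``rarefaction index'' dictionary of Proposition~\ref{dim:dp}. Since each part is proved the same way, I will concentrate on part~(i); parts~(ii) and~(iii) follow by the identical argument with $\upack{}$ replaced by $\lpack{}$ and $\dpack{}$ respectively (using Theorem~\ref{thm:lpack}(ii) and (iii) or their box-measure counterparts), and $\updim$, $\lpdim$, $\dpdim$ read off via Proposition~\ref{dim:dp}(iii), (iv), (v).

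For~(i), fix reals $s<\updim A$ and $t<\inf_{x\in A}\dpdim E_x$; the goal is to show $s+t\leq\updim E$, since then taking suprema over such $s,t$ gives the claim. By Proposition~\ref{dim:dp}(v), $t<\dpdim E_x$ means $\dboxm{t}(E_x)>0$ for every $x\in A$; so the integrand $x\mapsto\dboxm{t}(E_x)$ in Theorem~\ref{thm:lpack}(i) (with $g(r)=r^t$, $h(r)=r^s$, so $gh(r)=r^{s+t}$) is strictly positive on $A$, whence its upper integral against $\upack{s}$ restricted to $A$ is bounded below by, roughly, $\upack{s}(A)$ times a positive constant — more carefully, one writes $A=\bigcup_n A_n$ where $A_n=\{x\in A:\dboxm{t}(E_x)>1/n\}$, notes $\upack{s}(A)=\sup_n\upack{s}(A_n)$ since $\upack{s}$ is a Borel-regular outer measure, and on $A_n$ the integrand exceeds $1/n$, so the upper integral is at least $\upack{s}(A_n)/n$. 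Meanwhile $s<\updim A$ gives, by Proposition~\ref{dim:dp}(iii), $\upack{s}(A)=\infty$, hence $\upack{s}(A_n)=\infty$ for some $n$, so the right-hand side of Theorem~\ref{thm:lpack}(i) is infinite, forcing $\upack{s+t}(E)=\infty$, i.e. $s+t\leq\updim E$ by Proposition~\ref{dim:dp}(iii) again. One technical caveat: Theorem~\ref{thm:lpack}(i) is stated for $E\subs X\times Y$ with sections $E_x$, but to involve only $x\in A$ one should apply it to $E\cap(A\times Y)$, whose sections over $X\setminus A$ are empty (and $\dboxm{t}(\emptyset)=0$) so the upper integral is unaffected, and whose upper packing measure is at most $\upack{s+t}(E)$ by monotonicity.

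The main obstacle, and the only place requiring genuine care, is the measurability bookkeeping hidden in the upper integral: the set $A_n=\{x:\dboxm{t}(E_x)>1/n\}$ need not be Borel, so ``$\upint\dboxm{t}(E_x)\dd\upack{s}(x)\geq\upack{s}(A_n)/n$'' is not literally the restriction of an integral to a Borel set. This is handled by the definition of the upper integral: if $\phi\geq\dboxm{t}(E_\cdot)$ is any Borel function, then $\{\phi>1/n\}\supseteq A_n$ is Borel, so $\int\phi\dd\upack{s}\geq\frac1n\upack{s}(\{\phi>1/n\})\geq\frac1n\upack{s}(A_n)$ since $\upack{s}$ is Borel-regular and monotone; taking the infimum over such $\phi$ gives the bound, and then $\sup_n$ together with $\upack{s}(A)=\sup_n\upack{s}(A_n)$ (which holds as $A_n\upto A$ and $\upack{s}$ is an outer measure, cf.~Lemma~\ref{lem:MI}) finishes it. For parts~(ii) and~(iii) the same scheme applies verbatim, invoking Theorem~\ref{thm:lpack}(ii),(iii), the Borel-regularity of $\lpack{s}$ from Lemma~\ref{basic0}(iii), and Proposition~\ref{dim:dp}(iv),(v); no new difficulty arises because all the relevant measures are Borel-regular outer measures and behave well under increasing unions.
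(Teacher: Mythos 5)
Your argument is exactly what the paper intends: the paper offers no proof of Theorem~\ref{dim:sections} beyond the remark that it is a ``straightforward application'' of Proposition~\ref{dim:dp} to Theorems~\ref{thm:box}, \ref{thm:lpack} and Corollary~\ref{int:rec2}, and your write-up supplies precisely the missing bookkeeping (passing to $E\cap(A\times Y)$, the layer sets $A_n$, and the upper-integral measurability point). One inference is not valid as written: from $\upack{s}(A)=\sup_n\upack{s}(A_n)=\infty$ you cannot conclude that $\upack{s}(A_n)=\infty$ for some $n$, since an increasing union of sets of finite outer measure can have infinite measure. This slip is harmless, though: all you need is $\upack{s}(A_N)>0$ for some $N$, which yields $\upack{s+t}(E)\geq\frac1N\upack{s}(A_N)>0$ and hence $s+t\leq\updim E$ by the rarefaction-index property of $\upack{s+t}$, so the proof goes through after this one-line repair.
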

\begin{coro}\label{dim:rect}
For any metric spaces $X,Y$
\begin{enum}
\item  $\updim X\times Y\geq\updim X+\dpdim Y$,
\item  $\dpdim X\times Y\geq\dpdim X+\dpdim Y$,
\item  $\lpdim X\times Y\geq\lpdim X+\lpdim Y$.
\end{enum}
\end{coro}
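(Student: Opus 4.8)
The plan is to deduce Corollary~\ref{dim:rect} directly from Theorem~\ref{dim:sections} by specializing $E=X\times Y$. With this choice every cross section $E_x$ equals $Y$, so the infimum terms collapse to a single value and there is nothing delicate left to do; the work has already been done in the preceding theorem and in Proposition~\ref{dim:dp}.

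In more detail, I would argue as follows. Take $A=X$ in Theorem~\ref{dim:sections}, which is legitimate since $E_x=Y\neq\emptyset$ for every $x\in X$ (we may assume $X,Y$ nonempty, the degenerate case being trivial). Then part~(i) of Theorem~\ref{dim:sections} reads $\updim (X\times Y)\geq\updim X+\inf_{x\in X}\dpdim Y=\updim X+\dpdim Y$, which is exactly Corollary~\ref{dim:rect}(i). Likewise part~(ii) of the theorem gives $\dpdim(X\times Y)\geq\dpdim X+\dpdim Y$, which is~(ii), and part~(iii) gives $\lpdim(X\times Y)\geq\lpdim X+\lpdim Y$, which is~(iii). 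So all three inequalities follow at once.

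If one prefers to see the argument without invoking Theorem~\ref{dim:sections}, the same conclusions come straight out of Corollary~\ref{int:rec2} together with the rarefaction-index characterizations in Proposition~\ref{dim:dp}. For instance, to get~(i): given $s<\updim X$ and $t<\dpdim Y$, Proposition~\ref{dim:dp}(iii),(v) give $\upack{s}(X)=\infty$ and $\dboxm{t}(Y)>0$, whence Corollary~\ref{int:rec2}(i) yields $\upack{s+t}(X\times Y)\geq\upack{s}(X)\cdot\dboxm{t}(Y)=\infty$, so $\updim(X\times Y)\geq s+t$; letting $s\upto\updim X$ and $t\upto\dpdim Y$ finishes it. Parts~(ii) and~(iii) are handled identically using Corollary~\ref{int:rec2}(ii),(iii) and the corresponding items of Proposition~\ref{dim:dp}.

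There is no real obstacle here: the corollary is a formal consequence of results already in hand, and the only point requiring a word of care is the harmless observation that one may assume $X$ and $Y$ nonempty so that the hypothesis $E_x\neq\emptyset$ of Theorem~\ref{dim:sections} is met with $A=X$. Accordingly I would write the proof in a single sentence, stating that it is the case $E=X\times Y$, $A=X$ of Theorem~\ref{dim:sections}.
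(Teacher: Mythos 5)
Your proposal is correct and matches the paper, which offers no separate proof of Corollary~\ref{dim:rect} beyond noting that it follows from Theorem~\ref{dim:sections} (equivalently, from Corollary~\ref{int:rec2} together with Proposition~\ref{dim:dp}); your specialization $E=X\times Y$, $A=X$ is exactly the intended argument, and your second route via the measure inequalities is the same mechanism one level down. The only slip is cosmetic: parts (ii) and (iii) of Theorem~\ref{dim:sections} are the $\lpdim$ and $\dpdim$ statements respectively, i.e.\ in the opposite order to parts (ii) and (iii) of the corollary, so your attribution of which part yields which inequality should be swapped.
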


%%%%%%%%%%%%%%%%%%%%%%%%%%%%%%%%%%%%%%%%%%%%%%%%%%%%%%%%%%%
\section{Solution of the Hu--Taylor problem}\label{sec:Xiao}
%%%%%%%%%%%%%%%%%%%%%%%%%%%%%%%%%%%%%%%%%%%%%%%%%%%%%%%%%%%

The Hu and Taylor~\cite{MR1376540} definition of $\aDim$ (cf.~\eqref{aDim0})
trivially extends to subsets of Euclidean spaces: for $X\subs\Rset^m$ let
$$
  \aDim X=\min\{\updim X\times Z-\updim Z: Z\subs\Rset^m\}.
$$
We employ the idea of Xiao~\cite{MR1388205} to show that
$\aDim X=\dpdim X$ for any $X\subs\Rset^m$ and actually for any
metric space of finite Assouad dimension.

We make heavy use of the capacity and covering number functions introduced in
\eqref{eq:capacity0} and~\eqref{NNN}. The following elementary estimates will be
needed. If $X,Y$ are metric spaces and $r>0$, then
\begin{align}
  C_r(X\times Y)&\leq N_r(X)\, C_r(Y), \label{prod1}\\
  N_r(X\times Y)&\leq N_r(X)\, N_r(Y). \label{prod2}
\end{align}

Let us recall the notion of Assouad dimension and related material.
The interested reader is referred to J.~Luukkainen's paper~\cite{MR1608518}.
Given $Q\geq0$ and $m\geq 0$, a metric space $(X,d)$ is termed \emph{$(Q,m)$-homogeneous}
if $\abs A\leq Q(b/a)^m$ whenever
$a>0$ and $b\geq a$ are numbers and $A\subs X$ a set with $a\leq d(x,y)\leq b$
if $x,y\in A$ and $x\neq y$. It is easy to check that $X$ is
$(Q,m)$-homogeneous if and only if $C_r(E)\leq Q\left(\frac{\diam E}{r}\right)^m$
for every set $E\subs X$ and every $r\leq\diam E$, and that is the definition we shall
use.

The space $X$ is termed \emph{$m$-homogeneous} if it is $(Q,m)$-homogeneous for some $Q$;
and $X$ is termed \emph{countably $(Q,m)$-homogeneous} if it is a countable union
of $(Q,m)$-homogeneous subspaces, and likewise
\emph{countably $m$-homogeneous} if it is a countable union
of $m$-homogeneous subspaces.

P.~Assouad~\cite{MR532401} defined what is now called \emph{Assouad dimension}:
If $X$ is a metric space, then
$$
  \adim X=\inf\{m>0:\text{$X$ is $m$-homogeneous}\}.
$$
We also introduce the countably stable modification of $\adim X$:
$$
  \sadim X=\inf\{\sup_i\adim X_i:\text{$\{X_i\}$ is a countable cover of $X$}\}.
$$

Spaces of finite Assouad dimension are also called \emph{$\beta$-spaces} or
\emph{doubling spaces} and various other names and similar concepts are in
use, e.g.~D.~G.~Larman's~\cite{MR0203691}. J.~Luukkainen's
paper~\cite{MR1608518} is a good source of information including an ample list
of references.

We shall need the following simple lemma.
\begin{lem}\label{homo1}
Let $X$ be a $(Q,m)$-homogeneous metric space.
\begin{enum}
\item If $0<r<t$, then $C_r(X)r^m\leq 2^m Q \, C_t(X)t^m$,
\item $\uboxm{m}_0(X)\leq 2^m Q\, \lboxm{m}_0(X)$.
\end{enum}
\end{lem}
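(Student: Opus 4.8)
The plan is to prove (i) by a routine net‑covering estimate and then obtain (ii) as a formal consequence of (i) together with the capacity expressions for the box pre‑measures recorded earlier (\eqref{capac3} and the analogous formula for $\lboxm{m}_0$).

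For (i) we may assume $C_t(X)<\infty$, since otherwise the right‑hand side is infinite. Fix a finite set $F\subseteq X$ with $\gap F>t$ and $\card F=C_t(X)$. Maximality of $\card F$ forces, for every $x\in X$, some $y\in F$ with $d(x,y)\le t$, so the closed balls $\{B(y,t):y\in F\}$ cover $X$. Now let $G\subseteq X$ be arbitrary with $\gap G>r$. Since the balls cover $X$, $\card G\le\sum_{y\in F}\card\bigl(G\cap B(y,t)\bigr)\le\sum_{y\in F}C_r\bigl(B(y,t)\bigr)$, where we used $\gap(G\cap B(y,t))\ge\gap G>r$. Each ball satisfies $\diam B(y,t)\le 2t$, so the capacity form of $(Q,m)$‑homogeneity gives $C_r(B(y,t))\le Q(2t/r)^m$; in the degenerate case $\diam B(y,t)<r$ this still holds, as then $C_r(B(y,t))\le 1\le Q(2t/r)^m$ (recall $Q\ge 1$ and $r<t$). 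Hence $\card G\le C_t(X)\,Q\,(2t/r)^m$, and taking the supremum over all such $G$ yields $C_r(X)\le 2^m Q\,C_t(X)(t/r)^m$, which is (i) after multiplying by $r^m$. (Alternatively the covering step can be phrased through \eqref{CxN}, using $N_{2t}(X)\le C_t(X)$.)

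For (ii), recall from \eqref{capac3} that $\uboxm{m}_0(X)=\limsup_{\del\to0}C_\del(X)\del^m$, while $\lboxm{m}_0(X)=\liminf_{\del\to0}C_\del(X)\del^m$. Fix $t>0$. Applying (i) with $r=\del$ for every $\del<t$ and letting $\del\to0$ gives $\limsup_{\del\to0}C_\del(X)\del^m\le 2^m Q\,C_t(X)t^m$. Since $t>0$ was arbitrary, taking a liminf on the right along a sequence $t_j\to0$ realizing $\lboxm{m}_0(X)$ yields $\uboxm{m}_0(X)\le 2^m Q\,\lboxm{m}_0(X)$.

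Neither part presents a genuine obstacle. The only point needing a little care is the covering step in (i) — the maximal $t$‑net and the harmless edge case of balls of diameter below $r$ — after which (ii) is purely a matter of rearranging a $\limsup$ and a $\liminf$.
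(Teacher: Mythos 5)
Your proof is correct and follows essentially the same route as the paper: a maximal $t$-separated net whose $t$-balls cover $X$, the capacity form of $(Q,m)$-homogeneity applied to each ball of diameter at most $2t$, and then a comparison of the $\limsup$ against $C_t(X)t^m$ for fixed $t$ before passing to the $\liminf$. Your explicit treatment of the degenerate case $\diam B(y,t)<r$ (using $Q\geq 1$) is a harmless refinement the paper omits.
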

\begin{proof}
(i) Suppose $C_t(X)<\infty$ and let $E\subs X$ be a maximal set with $\gap E>t$.
Then the family of balls $\{B(x,t):x\in E\}$ covers $X$. Therefore
$$
  C_r(X)\leq\sum_{x\in E}C_r(B(x,r))\leq\abs E\, Q\bigl(\tfrac{2t}{r}\bigr)^m
  \leq C_t(X)\,Q\,2^m\bigl(\tfrac tr\bigr)^m.
$$
(ii) Let $r_n\downarrow0$ be such that
$\lim C_{r_n}(X)r_n^m=\lboxm{m}_0(X)$.
If $r_{n+1}\leq r\leq r_n$, then (i) yields
$C_r(X)r^m\leq 2^mQ\,C_r(X)r_n^m$
and (ii) follows on letting $n\to\infty$.
\end{proof}

\begin{thm}\label{thm:HuTa}
Let $\Del$ be a scale and $0\leq s\leq m\in\nset$.
There is a compact set $\zz\subs\Rset^m$ such that
$\uboxm{m-s}(\zz)=\uboxm{m-s}_0(\zz)=1$
and
\begin{enum}
\item
$\uboxm{m}_0(X\times\zz)\leq2^mQ\,\boxm{s}_{\Del,0}(X)$
for every $(Q,m)$-homogeneous space $X$,
\item
$\uboxm{m}(X\times\zz)\leq2^mQ\,\boxm{s}_{\Del}(X)$
for every countably $(Q,m)$-homogeneous space $X$.
\end{enum}
\end{thm}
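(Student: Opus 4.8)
The plan is to construct a single compact set $\zz\subs\Rset^m$ — depending on $s$, $m$ and the scale $\Del$ — as a Cantor-type set whose scaling pattern is tuned so that at the ``$\Del$-scales'' it looks like a full $m$-dimensional cube (so that products with $X$ see the full capacity of $X$ at those scales), while at all scales it has normalized $(m-s)$-dimensional upper box content equal to $1$. Concretely, I would fix a rapidly decreasing sequence $\seq{\del_k}$ drawn from $\Del$ with $\del_{k+1}/\del_k \to 0$, and build $\zz$ by, at the $k$-th stage of a Cantor construction inside a cube of side $\del_{k-1}$, replacing it with roughly $(\del_{k-1}/\del_k)^{m-s}$ subcubes of side $\del_k$, arranged in a grid. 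The gaps between consecutive scales being large forces the $\liminf$ and $\limsup$ defining $\lboxm{m-s}_0$ and $\uboxm{m-s}_0$ to agree at the ``measuring points'' $\del_k$, and a normalization of the constants makes the common value $1$; a standard interpolation argument (exactly of the flavour of Lemma~\ref{homo1}(i)–(ii), which applies since such regular Cantor sets are $(Q,m{-}s)$-homogeneous) extends this to all $\del\to0$ and, together with the uniform structure, gives $\uboxm{m-s}(\zz)=\uboxm{m-s}_0(\zz)=1$ by the usual argument that Method~I does not decrease the value for a set that is ``homogeneous everywhere''.

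For part~(i), I would estimate $C_r(X\times\zz)$ via~\eqref{prod1}: $C_r(X\times\zz)\leq N_r(X)\,C_r(\zz)$, and then use~\eqref{CxN} to pass between $N_r$ and $C_r$. The point of the construction is that for $r=\del_k\in\Del$ one has $C_{\del_k}(\zz)\approx (1/\del_k)^{m-s}$ while for a $(Q,m)$-homogeneous $X$ one controls $N_r(X)$ against $C_r(X)$ up to the constant $Q\,2^m$ and the dimension gap $r^{-?}$; multiplying, the powers $r^{-(m-s)}$ from $\zz$ and $r^{-s}$ from the capacity of $X$ combine to $r^{-m}$, so that $C_{\del_k}(X\times\zz)\,\del_k^m \leq 2^mQ\, C_{\del_k}(X)\,\del_k^s$. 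Taking $\liminf$ over $k$ — and here one must check that restricting the $\liminf$ defining $\uboxm{m}_0(X\times\zz)$ to the subsequence $\del_k$, which only \emph{decreases} a $\limsup$, is harmless because the large gaps between the $\del_k$ kill all other scales — yields $\uboxm{m}_0(X\times\zz)\leq \limsup_{\del\in\Del}C_\del(X)\del^s\cdot(\text{const}) = 2^mQ\,\boxm{s}_{\Del,0}(X)$ via the capacity formula~\eqref{capdel} for $\boxm{s}_{\Del,0}$. Part~(ii) is then formal: write a countably $(Q,m)$-homogeneous $X$ as $\bigcup_n X_n$ with each $X_n$ $(Q,m)$-homogeneous, apply (i) to each $X_n\times\zz$, and invoke countable subadditivity of $\uboxm{m}=\widehat{\uboxm{m}_0}$ together with the definition $\boxm{s}_{\Del}=\widehat{\boxm{s}_{\Del,0}}$ to pass to the infimum over covers; since $\{X_n\times\zz\}$ ranges over (the relevant) covers of $X\times\zz$, one gets $\uboxm{m}(X\times\zz)\leq 2^mQ\,\boxm{s}_{\Del}(X)$.

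The main obstacle is the construction of $\zz$ and the verification of the two-sided bound $\uboxm{m-s}(\zz)=\uboxm{m-s}_0(\zz)=1$ simultaneously with the ``matching scales'' property~(i): one needs $\zz$ to be \emph{lacunary} enough that capacities at the scales $\del_k\in\Del$ dominate everything (so that the $\limsup$ over $\Del$ on the right-hand side of~(i) is actually computed along $\seq{\del_k}$), yet \emph{regular} enough ($(Q',m{-}s)$-homogeneous) that the interpolation of Lemma~\ref{homo1} pins the upper box content to the same value $1$ at \emph{all} scales rather than merely the lower one. Balancing these two requirements — lacunarity for the product estimate versus homogeneity for the self-similar normalization — is the delicate point; once the sequence $\seq{\del_k}$ and the branching numbers are chosen correctly, the remaining estimates are the routine manipulations of $C_r$, $N_r$ indicated above using~\eqref{prod1}, \eqref{prod2}, \eqref{CxN}, \eqref{capdel} and Lemma~\ref{homo1}.
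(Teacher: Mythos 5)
Your overall blueprint — a Cantor-type set tuned to the scale $\Del$ with level-$k$ count $G(k)$ normalized by $G(k)\del_k^{m-s}=1$, the product estimate via \eqref{prod1}, \eqref{CxN} and Lemma~\ref{homo1}(i), and countable subadditivity for part (ii) — is the same as the paper's, and your branching numbers are right. But the construction as you describe it fails at the decisive point: you arrange the $(\del_{k-1}/\del_k)^{m-s}$ children of side $\del_k$ \emph{in a grid} inside the parent cube of side $\del_{k-1}$. That grid has spacing $\rho_k=\del_{k-1}^{\,p}\del_k^{\,1-p}$ (where $p=s/m$), which is far larger than $\del_k$ when $\del_k/\del_{k-1}\to0$; at the scale $r\approx\rho_k$ one must already use a separate ball for each of the $G(k)$ level-$k$ cubes, so
$$
  C_{\rho_k}(\zz)\,\rho_k^{m-s}\;\approx\;G(k)\,\rho_k^{m-s}
  \;=\;\bigl(\rho_k/\del_k\bigr)^{m-s}
  \;=\;\bigl(\del_{k-1}/\del_k\bigr)^{p(m-s)}\longrightarrow\infty .
$$
Hence $\uboxm{m-s}_0(\zz)=\infty$, not $1$: the intermediate scales are not ``killed'' by lacunarity, they are exactly where the content blows up, and the same blow-up (by the factor $(\del_{k-1}/\del_k)^{m-s}$) destroys the product estimate between $\rho_k$ and $\del_{k-1}$ (test with $X=[0,1]^m$). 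Your fallback — that such a set is $(Q,m-s)$-homogeneous so Lemma~\ref{homo1} interpolates — is also false: taking $E$ to be one parent cube's portion of $\zz$ and $r=\rho_k$ gives $C_{\rho_k}(E)\big/(\diam E/\rho_k)^{m-s}\approx(\del_{k-1}/\del_k)^{p(m-s)}\to\infty$, so no uniform $Q$ exists.

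The tension you correctly identify is resolved in the paper by the opposite arrangement: the $g(n)$ children of an interval of length $r_n$ are placed \emph{adjacent}, tiling a single block of length $u_n=g(n)r_{n+1}$ with $u_n<r_n/n$ (and $\zz=\yy^m$). Then for $r\in[u_n,r_n]$ each block is swallowed by one ball, so $N_r(\yy)=G(n)$ and $N_r(\yy)r^{1-p}\leq G(n)r_n^{1-p}\approx1$; for $r\in[r_{n+1},u_n]$ the block is, to within $r_{n+1}$, a full interval of length $u_n$, so $N_r(\yy)\leq 2G(n+1)r_{n+1}/r$ and again $N_r(\yy)r^{1-p}\leq 2G(n+1)r_{n+1}^{1-p}\approx2$. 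No homogeneity of $\zz$ is used or available; the only homogeneity in play is that of $X$, used via Lemma~\ref{homo1}(i) to transport $C_r(X)r^m$ up to the distinguished scale $r_n\in\Del$. Finally, the equality $\uboxm{m-s}(\zz)=1$ (for the Method~I measure, not just the pre-measure) requires a mass-distribution argument with the natural measure on $\zz$ as in Lemma~\ref{xi111}; your appeal to a ``usual argument'' gestures at this but would in any case need the corrected geometry to go through.
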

\begin{proof}
We prove only statement (i), as (ii) is its trivial consequence.
If $s=0$, put $\zz=[0,1]^m$. In this case the inequality reduces to
$\uboxm{m}_0(X\times[0,1]^m)\leq2^mQ\,\abs X$, which is trivially satisfied
for $X$ both finite or infinite.

If $s=m>0$, put $\zz=\{0\}$. In this case the inequality reduces to
$\uboxm{m}_0(X)\leq2^mQ\,\boxm{m}_{\Del,0}(X)$, which is nothing but
Lemma~\ref{homo1}.
We will thus suppose that $0<s<m$ and
let $p=\frac sm$ throughout the proof.

We first construct recursively a decreasing sequence $r_n\to0$ in $\Del$ and
an integer-valued sequence $g\in\nset^\nset$
such that, letting
\begin{align}
  G(n)&=g(0)g(1)\dots g(n-1), \notag \\
  u_n&=g(n)\,r_{n+1} \notag
\shortintertext{we have, for all $n$,}
   1-\tfrac1n&<G(n)r_n^{1-p}<1+\tfrac1n,\label{that3}\\
   r_{n+1}&<u_n<\tfrac{r_n}{n}. \label{that4}
\end{align}
Let $G(0)=1$, choose any $r_0\in\Del$ and recursively choose $r_{n+1}\in\Del$
small enough so that $r_{n+1}^p<r_n$ and the interval
$\Bigl(\frac{1-\frac{1}{n+1}}{G(n)r_{n+1}^{1-p}},
\frac{1+\frac{1}{n+1}}{G(n)r_{n+1}^{1-p}}\Bigr)$
is long enough to contain an even integer.
Let it be $g(n)$.
Thus
\begin{equation}\label{that6}
\frac{1-\frac{1}{n+1}}{G(n)r_{n+1}^{1-p}}<g(n)
<\frac{1+\frac{1}{n+1}}{G(n)r_{n+1}^{1-p}}
\end{equation}
and therefore $1-\frac{1}{n+1}<G(n+1)r_{n+1}^{1-p}<1+\frac{1}{n+1}$, as required.
Since $g(n)\geq2$, we also have $r_{n+1}<u_n$.
Condition $r_{n+1}^p<r_n$ in conjunction with~\eqref{that6} ensures $u_n<r_n/n$.

Next we define the space $\zz$.
Let
\begin{align*}
  &\TT=
  %\prod_{n\in\nset}G(n)=
  \{x\in\nset^\nset:\forall n\ x(n)<g(n)\},\\
  &\ttt=\{\tau\in\nset^{<\nset}:\exists x\in\TT\ \tau\subs x\},
  \quad\ttt_n=\{\tau\in\ttt:\abs\tau=n\}.
\end{align*}
For each $\tau\in\ttt$ define intervals $I_\tau=[a_\tau,b_\tau]$
recursively as follows. $I_\emptyset=[0,r_0]$.
Now suppose $n\in\nset$, $\tau\in\ttt_n$ and $I_\tau=[a_\tau,b_\tau]$
is defined.
For $i<g(n)$ let $a_{\tau i}=a_\tau+ir_{n+1}$,
$b_{\tau i}=a_{\tau i}+r_{n+1}=a_\tau+(i+1)r_{n+1}$.
It is clear that, for any $\tau$ of length $n$, the family $\{I_{\tau i}:i<g(n)\}$
consists of adjacent non-overlapping equally sized intervals of length $r_{n+1}$
and that its union is an interval of length $u_n$; and since $u_n<r_n$,
the union is contained in $I_\tau$.
Set
$$
  \yy=\bigcap_{n\in\nset}\bigcup\{I_\tau:\tau\in\ttt_n\},
  \quad
  \zz=\yy^m.
$$
In order to show that $\uboxm{m-s}(\zz)=\uboxm{m-s}_0(\zz)=1$
it is enough to establish the following claim.
Let $\mu$ be the evenly distributed Borel probability measure on $\zz$.
In more detail, $\mu$ is the cartesian power of the Borel measure $\lambda$
on $\yy$ that is determined by its values on $I_\tau$'s:
if $\tau\in\ttt_n$, then $\lambda(I_\tau)=1/G(n)$.
\begin{lem}\label{xi111}
$\mu(E)=\uboxm{m-s}(E)$ for every Borel set $E\subs\zz$
and $\uboxm{m-s}_0(\zz)=1$.
\end{lem}
\begin{proof}
For each $n\in\nset$ and every
$\ttau=\seq{\tau_i:i<m}\in(\ttt_n)^m$ define
$J_\ttau=\prod_{i<m}[a_{\tau_i},a_{\tau_i+u_n})$.
Let $B\subs\yy$ be Borel. Consider the set
$
  S=\{\ttau\in(\ttt_n)^m:J_\ttau\cap B\neq\emptyset\}.
$
Pick one point of $B$ in every $J_\ttau$, $\ttau\in S$. Thus chosen points are mutually
more than $r_n-u_n$ apart and thus witness $C_{r_n-u_n}(B)\geq\abs S$.
On the other hand, $\mu(B)\leq\sum_{I\in S}\mu(I)=\abs S\cdot\frac{1}{G(n)^m}$.
Therefore
$$
  \mu(B)\leq\frac{\abs S}{G(n)^m}
  \leq\frac{C_{r_n-u_n}(B)}{G(n)^m}
  \clue{that3}{\leq} C_{r_n-u_n}(B)r_n^{m-s}\frac{1}{(1-1/n)^m}
$$
and since $\tfrac{1}{(1-1/n)^m}\to1$ and, by \eqref{that4},
$\tfrac{u_n}{r_n}\to1$, we get
$$
  \mu(B)\leq\varlimsup_{n\to\infty}C_{r_n-u_n}(B)(r_n-u_n)^{m-s}
  \leq\uboxm{m-s}_0(B).
$$
Since this holds for every Borel set $B$, we get $\mu(E)\leq\uboxm{m-s}(E)$
for every $E\subs\zz$ Borel.

It remains to show that $\uboxm{m-s}_0(\zz)\leq1$.
Write $\teta_n=1+\frac1n$.
Since obviously $N_{u_n}(\yy)=G(n)$, we have, for all $r\in[u_n,r_n]$,
\begin{equation}\label{xi3}
  N_r(\yy)r^{1-p}\leq N_{u_n}(\yy)r_n^{1-p}=G(n)r_n^{1-p}
    \clue{that3}{\leq}\teta_n.
\end{equation}
Now suppose $r\in[r_{n+1},u_n]$.
Note first that $N_r(E)\leq \frac{\diam E}{r}+1$ for any set $E\subs\Rset$
and therefore $N_r(\yy)\leq N_t(\yy)\bigl(\frac tr+1\bigr)$ whenever $r<t$.
In particular,
\begin{equation}\label{new1}
  N_r(\yy)\leq N_{u_n}(\yy)\Bigl(\frac{u_n}{r}+1\Bigr)\leq2G(n)\frac{u_n}{r},
\end{equation}

\begin{equation} \label{xi5}
\begin{split}
    &N_r(\yy)r^{1-p}\leq G(n)(\tfrac{u_n}{r}+1\bigr)r^{1-p}
     =G(n+1)r_{n+1}r^{-p}+G(n)r^{1-p}\\
    &\qquad\leq G(n+1)r_{n+1}^{1-p}+G(n)r_n^{1-p}\bigl(\tfrac{u_n}{r_n}\bigr)^{1-p}
     \clue{that3}{\leq}
     \teta_{n+1}+\teta_n\bigl(\tfrac{u_n}{r_n}\bigr)^{1-p}.
  \end{split}
\end{equation}
Since $C_r(\zz)\leq (N_r(\yy))^m$ (cf.~\eqref{CxN} and~\eqref{prod2}) and
since $\teta_n\to1$ and $\frac{u_n}{r_n}\leq\frac1n$, the estimates~\eqref{xi3}
and~\eqref{xi5} give
$$
  \uboxm{m-s}_0(\zz)\leq\varlimsup_{r\to0}N_r(\zz)r^{m-s}\leq
  \varlimsup_{n\to\infty}(\teta_n+\teta_n/n^{1-p})^m=1.
  \qedhere
$$
\end{proof}
We proceed with the proof of the theorem.
It remains to estimate $\uboxm{m}_0(X\times\zz)$ from above.
For $r\in[u_n,r_n]$ we employ Lemma~\ref{homo1}(i).

\begin{equation} \label{xi4}
  \begin{split}
    &C_r(X\times\zz)r^m
    \leq C_r(X)r^m\bigl(N_r(K)\bigr)^m
    \leq 2^mQ\, C_{r_n}(X)\,r_n^mG(n)^m\\
    &\qquad\leq2^mQ\, C_{r_n}(X)\,r_n^s\bigl(G(n)r_n^{1-p}\bigr)^m
    \clue{that3}{\leq}2^mQ\, C_{r_n}(X)\,r_n^s\,\teta_n^m.
  \end{split}
\end{equation}
For $r\in[r_{n+1},u_n]$ we employ the latter estimate~\eqref{new1}.

\begin{equation} \label{xi6}
  \begin{split}
  &C_r(X\times\zz)r^m
    \leq C_r(X)r^m\bigl(N_r(K)\bigr)^m
    \leq C_{r_{n+1}}(X)r_{n+1}^s\bigl(2G(n)u_nr_{n+1}^{-p}\bigr)^m
       \\
    &\qquad
    \leq C_{r_{n+1}}(X)r_{n+1}^s\bigl(2G(n+1)r_{n+1}^{1-p}\bigr)^m
      \clue{that3}{\leq} 2^m\teta_{n+1}^mC_{r_{n+1}}(X)r_{n+1}^s.
  \end{split}
\end{equation}
Since $\teta_n\to1$, \eqref{xi4} and~\eqref{xi6} yield
$$
  \uboxm{m}_0(X\times\zz)
  \leq\varlimsup_{n\to\infty}2^mQ\,C_{r_n}(X)r_n^s
  \leq 2^mQ\,\boxm{s}_{\Del,0}(X).
  \qedhere
$$
\end{proof}

The simple proof of the following corollary is omitted.
\begin{coro}
Let $0\leq s\leq m\in\nset$. Let $X$ be a countably $m$-homogeneous metric space
such that $\dboxm{s}(X)<\infty$.
There is a compact set~$\zz\subs\Rset^m$ such that $\uboxm{m-s}(\zz)=1$
and $\uboxm{m}(X\times\zz)$ is $\sigma$-finite.
\end{coro}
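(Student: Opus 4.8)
The plan is to strip the hypothesis down to a single scale and then invoke Theorem~\ref{thm:HuTa}(ii) on the homogeneous pieces of $X$ separately. Since $\dboxm{s}(X)<\infty$, Lemma~\ref{basic}(iv) (which gives $\dboxm{s}=\inf_{\Del}\boxm{s}_{\Del}$) furnishes a scale $\Del$ with $\boxm{s}_{\Del}(X)<\infty$. Feed this $\Del$, together with the given $0\le s\le m\in\nset$, into Theorem~\ref{thm:HuTa}(ii): it produces a compact set $\zz\subs\Rset^m$, depending only on $s$, $m$ and $\Del$, with $\uboxm{m-s}(\zz)=1$ and
$$
  \uboxm{m}(W\times\zz)\leq 2^m Q\,\boxm{s}_{\Del}(W)
$$
for every countably $(Q,m)$-homogeneous space $W$. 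The point to stress is that this single $\zz$ serves all values of $Q$ simultaneously.

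Next, write $X=\bigcup_{i\in\nset}X_i$ where each $X_i$ is $(Q_i,m)$-homogeneous for some constant $Q_i$ (this is what ``countably $m$-homogeneous'' means). Each $X_i$ is in particular countably $(Q_i,m)$-homogeneous, and $\boxm{s}_{\Del}$ is monotone, so $\boxm{s}_{\Del}(X_i)\leq\boxm{s}_{\Del}(X)<\infty$; hence the displayed inequality gives $\uboxm{m}(X_i\times\zz)\leq 2^m Q_i\,\boxm{s}_{\Del}(X_i)<\infty$ for every $i$. Since $X\times\zz=\bigcup_{i\in\nset}(X_i\times\zz)$, the set $X\times\zz$ is a countable union of sets of finite $\uboxm{m}$-measure, i.e.\ $\uboxm{m}$ is $\sigma$-finite on it; and using Borel-regularity of $\uboxm{m}$ one may even arrange the pieces to be Borel. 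Together with $\uboxm{m-s}(\zz)=1$ this is exactly the assertion.

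There is no real obstacle here: the one observation that makes it work is that a single scale $\Del$ with $\boxm{s}_{\Del}(X)<\infty$ serves \emph{all} the pieces $X_i$ at once, by monotonicity of $\boxm{s}_{\Del}$, whereas the homogeneity constants $Q_i$ are allowed to vary from piece to piece with no harm done, because only finiteness of each $\uboxm{m}(X_i\times\zz)$ is needed, not a uniform bound.
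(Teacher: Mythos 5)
Your proof is correct and is evidently the argument the paper has in mind (the paper explicitly omits it as ``simple''): extract a single scale $\Del$ with $\boxm{s}_{\Del}(X)<\infty$ from Lemma~\ref{basic}(iv), note that the set $\zz$ of Theorem~\ref{thm:HuTa} depends only on $s$, $m$ and $\Del$ and not on $Q$, and apply part (ii) to each $(Q_i,m)$-homogeneous piece $X_i$ separately, using monotonicity of $\boxm{s}_{\Del}$ to keep each bound finite. The remarks you single out --- that one $\Del$ serves all pieces while the constants $Q_i$ may vary since only finiteness, not uniformity, is needed for $\sigma$-finiteness --- are exactly the two points on which the corollary turns.
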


\begin{thm}\label{HuTa2}
Let $X$ be a metric space $X$ and $\sadim X\leq m\in\nset$.
There is a compact set $Z\subs\Rset^m$ such
that $\updim X\times Z=\dpdim X+\updim Z$.
\end{thm}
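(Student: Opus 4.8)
The inequality $\updim X\times Z\ge\dpdim X+\updim Z$ holds for \emph{every} metric space $Z$; this is Corollary~\ref{dim:rect}(i) read with the two factors interchanged. So the task is to exhibit a single compact $Z\subs\Rset^m$ for which the reverse inequality $\updim X\times Z\le\dpdim X+\updim Z$ holds as well. Observe that $\dpdim X\le\sadim X\le m$ and, since $\dpdim\le\updim$ (compare any increasing cover termwise, using $\lbdim\le\ubdim$), also $\dpdim X\le\updim X\le\sadim X\le m$. Hence if $\dpdim X=m$ it suffices to take $Z=\{0\}$, as then $\updim X\times Z=\updim X=m=\dpdim X+\updim Z$. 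So assume henceforth $\dpdim X<m$.

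The plan is to manufacture countably many building blocks from the corollary preceding this theorem and then glue them. Fix $s_k\in(\dpdim X,m)$ with $s_k\downarrow\dpdim X$. By Proposition~\ref{dim:dp}(v), each $s_k>\dpdim X$ gives $\dboxm{s_k}(X)=0$, in particular finite; and since $\sadim X\le m$, $X$ is a countable union of $m$-homogeneous subspaces (immediate once $\sadim X<m$, an $m'$-homogeneous space with $m'\le m$ being $m$-homogeneous). So that corollary applies with $s=s_k$ and yields a compact $Z_k\subs\Rset^m$ with $\uboxm{m-s_k}(Z_k)=1$ and $\uboxm{m}(X\times Z_k)$ $\sigma$-finite. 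From $\uboxm{m-s_k}(Z_k)=1<\infty$ one extracts, covering $Z_k$ by sets on which the premeasure $\uboxm{m-s_k}_0$ is finite and noting via~\eqref{capac3} that $\limsup_{\del\to0}C_\del(A)\del^{m-s_k}<\infty$ forces $\limsup_{\del\to0}C_\del(A)\del^{s}=0$ for $s>m-s_k$, that $\uboxm{s}_0$, and hence $\uboxm{s}$, vanishes on $Z_k$ for every $s>m-s_k$; since moreover $\uboxm{m-s_k}(Z_k)=1\ne0$ and $\uboxm{\,\cdot\,}$ is non-increasing in the exponent, Proposition~\ref{dim:dp}(iii) gives $\updim Z_k=m-s_k$. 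The same transfer applied to each piece of a $\sigma$-finite cover of $X\times Z_k$ gives $\uboxm{s}(X\times Z_k)=0$ for $s>m$, hence $\updim X\times Z_k\le m$.

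Now glue. Rescale and translate $Z_k$ to a set $Z'_k\subs\Rset^m$ of diameter at most $2^{-k}$ contained in $B(2^{-k}e_1,2^{-k-2})$, so that the $Z'_k$ are pairwise disjoint and accumulate only at the origin, and put $Z=\{0\}\cup\bigcup_kZ'_k$, a compact subset of $\Rset^m$. Each of these rescalings-plus-translations is bi-Lipschitz, both on $Z_k$ and on $X\times Z_k$ (with the maximum metric), so by Lemma~\ref{lipp} (through $\upack{s}(f(\cdot))\le c^{s}\upack{s}(\cdot)$) it preserves $\updim$; thus $\updim Z'_k=m-s_k$ and $\updim X\times Z'_k\le m$. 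Countable stability of $\updim$ then gives $\updim Z=\sup_k\updim Z'_k=\sup_k(m-s_k)=m-\dpdim X$ and $\updim X\times Z=\max\{\updim X,\ \sup_k\updim X\times Z'_k\}$. Since $\updim X\times Z'_k\ge\dpdim X+\updim Z'_k=\dpdim X+(m-s_k)$ by the free lower bound, $\sup_k\updim X\times Z'_k\ge\dpdim X+(m-\inf_ks_k)=m$, while $\updim X\times Z'_k\le m$ and $\updim X\le m$; therefore $\updim X\times Z=m=\dpdim X+(m-\dpdim X)=\dpdim X+\updim Z$, as required.

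I expect two points to need care. First, passing from $\sadim X\le m$ to a genuine decomposition of $X$ into $m$-homogeneous pieces is transparent when $\sadim X<m$, but in the borderline case $\sadim X=m$ one has to argue separately — the subcase $\dpdim X=m$ was disposed of above with $Z=\{0\}$, and in the remaining subcase one must see that the infimum defining $\sadim$ may be realized (or closely enough approached) by covers whose pieces are genuinely $m$-homogeneous, so that the corollary still applies with the ambient Euclidean dimension, the homogeneity exponent, and the index of $\uboxm{m}$ all equal to the same integer $m$. Second, one must check that the convergent union $Z=\{0\}\cup\bigcup_kZ'_k$ contributes nothing spurious: the only new fibre over $Z$ is $X\times\{0\}$, of packing dimension $\updim X\le m$, which is harmless precisely because the blocks $X\times Z'_k$ already raise $\updim X\times Z$ to $m$. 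Everything else is bookkeeping with the rarefaction-index identities of Proposition~\ref{dim:dp} and the elementary monotonicity of $\uboxm{\,\cdot\,}$ in the exponent.
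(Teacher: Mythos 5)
Your overall strategy coincides with the paper's: build one compact block for each $s_k\downarrow\dpdim X$ out of the construction of Theorem~\ref{thm:HuTa}, check that $\updim Z_k=m-s_k$ and $\updim X\times Z_k\leq m$, and glue rescaled copies into a compactum accumulating at one point; the endgame (countable stability of $\updim$, the reverse inequality from Corollary~\ref{dim:rect}) is the same. Your separate treatment of the case $\dpdim X=m$ via $Z=\{0\}$ is a sensible addition, since Theorem~\ref{thm:HuTa} requires $s\leq m$ and the paper glosses over this.

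The genuine gap is the step you flag yourself but do not close: to invoke the corollary following Theorem~\ref{thm:HuTa} you need $X$ to be \emph{countably $m$-homogeneous}, and $\sadim X\leq m$ does not give you that. It gives, for every $\eps>0$, a countable cover by $(Q_i,m+\eps)$-homogeneous pieces (an $m'$-homogeneous space with $m'\leq m+\eps$ is $(m+\eps)$-homogeneous because $r\leq\diam E$ in the defining inequality), but the infimum defining $\adim$, and hence $\sadim$, need not be attained; so in the borderline case $\sadim X=m$ with $\dpdim X<m$ there may be no cover by genuinely $m$-homogeneous pieces, and your hoped-for ``realized or closely enough approached by $m$-homogeneous covers'' is precisely what can fail. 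The paper's proof is built to dodge exactly this: it does not apply Theorem~\ref{thm:HuTa} or its corollary as a black box, but goes back into the estimates \eqref{xi4} and \eqref{xi6}, replaces $(Q,m)$-homogeneity by $(Q_i,m+\eps)$-homogeneity and the exponent $m$ by $m+\eps$, deduces $\uboxm{m+\eps}(X\times\zz_s)=0$ for every $\eps>0$, and only then concludes $\updim X\times\zz_s\leq m$. Your argument is complete whenever $X$ is countably $m$-homogeneous --- in particular for $X\subs\Rset^m$, so it does settle the Hu--Taylor problem itself --- but to prove the theorem as stated you must either rerun the capacity estimates with exponent $m+\eps$ as the paper does, or prove that $\sadim X\leq m$ forces countable $m$-homogeneity, which it does not.
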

\begin{proof}
Fix $s>\dpdim X$. By Proposition~\ref{dim:dp} there is a scale $\Del$ such that
$\boxm{s}_\Del(X)=0$. Let $\zz_s$ be the space $\zz$ of Theorem~\ref{thm:HuTa}.

Let $\eps>0$. There is a cover $X=\bigcup_{i\in\nset}X_i$ such that for each
$i\in\nset$ there is $Q_i$ such that $X_i$ is $(Q_i,m+\eps)$-homogeneous
and moreover $\boxm{s}_{\Del,0}(X_i)<1$.
Inspect the estimates~\eqref{xi4} and~\eqref{xi6}: If we use
$(Q_i,m+\eps)$-homogeneity of $X_i$ instead of $(Q,m)$-homogeneity of $X$, we get
for $r\in[u_n,r_n]$
$$
  C_r(X_i\times\zz_s)r^{m+\eps}\leq
  2^{m+\eps}Q_i\teta_n^m\,C_{r_n}(X_i)r_n^sr_n^\eps
$$
and since~\eqref{xi6} does not depend on homogeneity of $X$, we get
for $r\in[r_{n+1},u_n]$
$$
  C_r(X_i\times\zz_s)r^{m+\eps}\leq
  C_r(X_i\times\zz_s)r^mr^\eps\leq
  2^m\teta_{n+1}^m\,C_{r_{n+1}}(X_i)r_{n+1}^su_n^\eps.
$$
Since $\teta_n\to0$, $r_n\to0$, $u_n\to0$ and
$\varlimsup_{n\to\infty} C_{r_n}(X_i)r_n^s\leq\boxm{s}_{\Del,0}(X_i)<1$,
these es\-ti\-mates yield $\uboxm{m+\eps}_0(X_i\times\zz_s)=0$ for all $i$.
Consequently $\uboxm{m+\eps}(X\times\zz_s)=0$ for all $\eps>0$,
whence $\updim X\times\zz_s\leq m$.

Now let $s_k=\dpdim X+\frac1k$. Consider the spaces $\zz_{s_k}$.
\emph{Mutatis mutandis} we may assume $\zz_{s_k}\subs[2^{-k-1},2^{-k}]$.
Let $Z=\bigcup_{k\in\nset}\zz_{s_k}\cup\{0\}$. It is clearly compact and
$\updim X\times Z\leq\sup\updim X\times \zz_{s_k}\leq m$. On the other hand,
$\updim Z\geq\sup m-s_k=m-\dpdim X$.
Thus $\updim X\times Z\leq\dpdim X+\updim Z$.
The opposite inequality follows from Theorem~\ref{dim:rect}.
\end{proof}

In particular, if $X\subs\Rset^m$, the above theorem yields a solution
to the problem of Hu and Taylor:
\begin{coro}
For every set $X\subs\Rset^m$ there is a compact set $Z\subs\Rset^m$ such that
$\updim X\times\zz-\updim\zz=\dpdim X$. In particular,
$\aDim X=\dpdim X$.
\end{coro}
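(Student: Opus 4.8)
The corollary asserts: for every $X\subs\Rset^m$ there is a compact $Z\subs\Rset^m$ with $\updim X\times Z-\updim Z=\dpdim X$, and in particular $\aDim X=\dpdim X$.

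The plan is to observe that this is almost immediate from Theorem~\ref{HuTa2} together with the dimension inequality already proved in Corollary~\ref{dim:rect}(i). First I would note that any subset $X\subs\Rset^m$ satisfies $\sadim X\leq\adim\Rset^m=m$, since $\Rset^m$ is $m$-homogeneous and the Assouad dimension is monotone; hence the hypothesis $\sadim X\leq m\in\nset$ of Theorem~\ref{HuTa2} holds automatically for $X\subs\Rset^m$. Applying that theorem yields a compact set $Z\subs\Rset^m$ with $\updim X\times Z=\dpdim X+\updim Z$, which rearranges to $\updim X\times Z-\updim Z=\dpdim X$, giving the first assertion.

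For the ``in particular'' clause, I would unwind the definition $\aDim X=\min\{\updim X\times W-\updim W:W\subs\Rset^m\}$. The compact $Z$ just produced shows the value $\dpdim X$ is attained, so $\aDim X\leq\dpdim X$. For the reverse inequality, Corollary~\ref{dim:rect}(i) gives $\updim X\times W\geq\updim W+\dpdim X$ for every metric space $W$, hence $\updim X\times W-\updim W\geq\dpdim X$ for all $W\subs\Rset^m$; taking the infimum over $W$ gives $\aDim X\geq\dpdim X$. Combining the two inequalities yields $\aDim X=\dpdim X$, and the infimum is in fact a minimum, realized by $Z$, which also justifies writing $\min$ in the definition of $\aDim$.

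There is no real obstacle here: the work has all been done in Theorem~\ref{HuTa2} (the construction of $Z$) and Corollary~\ref{dim:rect}(i) (the lower bound). The only point deserving a word is the uniform upper bound $\sadim X\leq m$ for $X\subs\Rset^m$; one should be slightly careful that $\updim Z$ is finite (it is, since $Z\subs\Rset^m$ forces $\updim Z\leq m$), so that the subtraction $\updim X\times Z-\updim Z$ is well-defined and the rearrangement of the equality in Theorem~\ref{HuTa2} is legitimate. That is why the proof of this corollary is, as the paper says, omitted or left to a single line.
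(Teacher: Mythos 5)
Your proposal is correct and matches the paper's (omitted) argument exactly: the compact set $Z$ comes straight from Theorem~\ref{HuTa2} (whose hypothesis $\sadim X\leq m$ holds for any $X\subs\Rset^m$), and the reverse inequality $\aDim X\geq\dpdim X$ is Corollary~\ref{dim:rect}(i). Nothing further is needed.
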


%%%%%%%%%%%%%%%%%%%%%%%%%%%%%%%%%%%%%%%%%%%%%%%%%%%%%%%%%%%%%%%%%%%%%%%%%%%%%%%
%%%%%%%%%%%%%%%%%%%%%%%%%%%%%%%%%%%%%%%%%%%%%%%%%%%%%%%%%%%%%%%%%%%%%%%%%%%%%%%
\section{Comments and questions}\label{sec:rem}
%%%%%%%%%%%%%%%%%%%%%%%%%%%%%%%%%%%%%%%%%%%%%%%%%%%%%%%%%%%%%%%%%%%%%%%%%%%%%%%
%%%%%%%%%%%%%%%%%%%%%%%%%%%%%%%%%%%%%%%%%%%%%%%%%%%%%%%%%%%%%%%%%%%%%%%%%%%%%%%

\subsection*{Other packing measures}
The packings we used are sometimes called \emph{weak packings} or
\emph{pseudo-packings}.
There are other kinds of packing in use. The most common that we shall call
\emph{true packing} is this: $\{(x_i,r_i):i\in I\}$
is a \emph{true packing} if the balls $B(x_i,r_i), B(x_j,r_j)$ are disjoint
for distinct $i,j\in I$. There are also open balls variants.
Some definitions of packing measures are based on diameters of the underlying balls,
instead of radii.
Various packing measures and their relations are discussed in detail
e.g.~in~\cite{MR1844397,MR2288081,MR1824158}.
Analysis shows that our results are to some extent
valid also for other packing measures.

\subsection*{Hausdorff measures}
We intentionally neglected results involving Hausdorff measure and dimension.
The reason is that for any Hausdorff function $h$ we have
$\haus{h_2}\leq\lboxm{h}$, where $h_2(r)=h(r/2)$ and $\haus{h_2}$ is the corresponding
Hausdorff measure. (Hint: If $\{(x_i,\delta)\}$ is a maximal uniform packing, then
$\{B(x_i,\delta)\}$ is a $2\delta$-cover.)
Thus e.g.~Howroyd's~\cite[Theorem 13]{MR1362951} stating that
$\upack{gh}(X\times Y)\geq\upack{g}(X)\,\haus{h_2}(Y)$
follows at once from Corollary~\ref{int:rec2}.

\subsection*{Upper estimates of cartesian products measures}
All inequalities of Section~\ref{sec:int} estimate the measures on a product by
means of measures on coordinate spaces from below. We paid no attention to
reverse estimates. Basic results in this direction are due to
Tricot~\cite{MR633256} and Howroyd~\cite{MR1362951}, see also~\cite{MR1844397}.
Howroyd has the following: Let $\uPack g$ denote the upper packing measure
obtained from true packings. Let $g,h$ be right-continuous Hausdorff functions. Then
$\uPack{gh}(X\times Y)\leq\uPack g(X)\upack h(Y)$ for any metric spaces $X,Y$, as long
as the product on the right is not $0\cdot\infty$ or $\infty\cdot0$.
Inspection of the proof shows that restricting the admissible radii to a given scale
does not matter. One can thus conclude that, under the same conditions and with
the obvious definitions,
$\uPack{gh}_\Del(X\times Y)\leq\uPack g_\Del(X)\upack h_\Del(Y)$ for any scale $\Del$
and also $\lPack{gh}(X\times Y)\leq\lPack g(X)\upack h(Y)$ and
$\dPack{gh}(X\times Y)\leq\dPack g(X)\upack h(Y)$.

Corresponding inequalities for the box measures can be derived from~\eqref{prod1}.
Corresponding inequalities for dimensions (due to Tricot~\cite{MR633256})
are well-known (except $\dpdim X\times Y\leq\dpdim X+\updim Y$).
Combining Tricot's inequalities with the one just mentioned and
Corollary~\ref{dim:rect} we thus have:
\begin{thm}
For any metric spaces $X,Y$
\begin{align*}
  \lpdim X+\lpdim Y\leq\lpdim X\times Y&\leq\lpdim X+\updim Y, \\
  \dpdim X+\dpdim Y\leq\dpdim X\times Y&\leq\dpdim X+\updim Y \\
  &\leq\updim X\times Y\leq\updim X+\updim Y.
\end{align*}
\end{thm}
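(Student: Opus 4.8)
The plan is to read the five-term chain as three separate pieces: the two left-hand lower bounds are already in hand, the middle inequality $\dpdim X\times Y\le\updim X\times Y$ is formal, and the three upper bounds reduce to elementary product estimates for box dimensions.

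I would dispose of the easy parts first. The bounds $\lpdim X+\lpdim Y\le\lpdim X\times Y$ and $\dpdim X+\dpdim Y\le\dpdim X\times Y$ are Corollary~\ref{dim:rect}(iii) and (ii) word for word. The inequality $\dpdim X\times Y\le\updim X\times Y$ follows by comparing the defining infima: both dimensions are of the form $\inf\{\sup_n(\text{box-dim}\,E_n):E_n\upto E\}$, with $\lbdim$ for $\dpdim$ and $\ubdim$ for $\updim$, and $\lbdim E_n\le\ubdim E_n$ for every $E_n$. Finally $\dpdim X+\updim Y\le\updim X\times Y$ comes from Corollary~\ref{dim:rect}(i) applied to $Y\times X$, which gives $\updim Y\times X\ge\updim Y+\dpdim X$; since the maximum metric makes the coordinate swap an isometry, $\updim Y\times X=\updim X\times Y$.

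For the three remaining upper bounds $\updim X\times Y\le\updim X+\updim Y$, $\lpdim X\times Y\le\lpdim X+\updim Y$, and the new $\dpdim X\times Y\le\dpdim X+\updim Y$, the engine is a pair of product inequalities for box dimensions. From~\eqref{prod2} we have $N_r(A\times B)\le N_r(A)\,N_r(B)$ and from~\eqref{prod1} we have $C_r(A\times B)\le N_r(A)\,C_r(B)$; dividing logarithms by $\abs{\log r}$, using~\eqref{CxN} to pass freely between $N_r$ and $C_r$, and using $\varliminf_r(a_r+b_r)\le\varliminf_r a_r+\varlimsup_r b_r$, one gets
\begin{equation*}
  \ubdim(A\times B)\le\ubdim A+\ubdim B,\qquad\lbdim(A\times B)\le\lbdim A+\ubdim B.
\end{equation*}
Now fix $t>\updim Y$ and a countable cover $Y=\bigcup_m Y_m$ with $\ubdim Y_m<t$. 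For the $\updim$ and $\lpdim$ bounds, also fix a countable cover $X=\bigcup_n X_n$ with $\ubdim X_n<s$, resp.\ $\lbdim X_n<s$, for a chosen $s$ strictly above the relevant dimension of $X$; then $\{X_n\times Y_m\}_{n,m}$ covers $X\times Y$, each member has $\ubdim<s+t$, resp.\ $\lbdim<s+t$, by the displayed estimates, and letting $s,t$ decrease yields the two inequalities. For the $\dpdim$ bound, choose instead an increasing exhaustion $X_k\upto X$ with $\lbdim X_k<s$ (possible as $s>\dpdim X$) and put $Z_k=\bigcup_{m\le k}Y_m$, so that $Z_k\upto Y$ and $\ubdim Z_k<t$ by finite stability of $\ubdim$; with $W_k=X_k\times Z_k$ we get $W_k\upto X\times Y$ and $\lbdim W_k\le\lbdim X_k+\ubdim Z_k<s+t$, hence $\dpdim X\times Y\le\sup_k\lbdim W_k\le s+t$, and $s\downarrow\dpdim X$, $t\downarrow\updim Y$ finishes.

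The one spot needing genuine care is this last construction. Since $\dpdim$ is defined through \emph{increasing} exhaustions, I can feed it an increasing exhaustion of $X$ directly, but $\updim Y$ only supplies an arbitrary countable cover of $Y$; the fix is to replace that cover by its increasing partial unions $Z_k$, which does not inflate the upper box dimension precisely because $\ubdim$---unlike $\lbdim$---is finitely stable. Everything else is either a citation of Corollary~\ref{dim:rect} or a routine logarithmic computation, so I expect no further difficulty.
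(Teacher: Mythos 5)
Your proof is correct, but for the three upper bounds it takes a genuinely different and more self-contained route than the paper. The paper obtains $\updim X\times Y\le\updim X+\updim Y$ and $\lpdim X\times Y\le\lpdim X+\updim Y$ by citing Tricot, and derives the one new inequality $\dpdim X\times Y\le\dpdim X+\updim Y$ from a measure-level product upper bound of Howroyd type, $\dPack{gh}(X\times Y)\le\dPack{g}(X)\,\upack{h}(Y)$, which requires true packings, right-continuous Hausdorff functions and a caveat excluding $0\cdot\infty$, before translating into dimensions via Proposition~\ref{dim:dp}. You instead work directly with covering numbers: the estimates $\ubdim(A\times B)\le\ubdim A+\ubdim B$ and $\lbdim(A\times B)\le\lbdim A+\ubdim B$ follow from \eqref{prod1}, \eqref{prod2} and \eqref{CxN} (the second one by applying \eqref{prod1} with the roles of the factors swapped, which is legitimate since the coordinate swap is an isometry for the maximum metric), and the packing-type dimensions are then handled by feeding product covers (for $\updim$, $\lpdim$) or the diagonal increasing exhaustion $X_k\times\bigcup_{m\le k}Y_m$ (for $\dpdim$) into their defining infima. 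Your key observation --- that the countable cover of $Y$ witnessing $\updim Y<t$ may be replaced by its increasing partial unions without pushing $\ubdim$ above $t$, because $\ubdim$ is finitely stable while $\lbdim$ is not --- is precisely what makes the $\dpdim$ bound work and substitutes for the paper's appeal to the Howroyd-type measure inequality. Your route is more elementary and dodges the right-continuity and $0\cdot\infty$ caveats; the paper's route yields the stronger measure-level product statements as a by-product. The lower bounds and $\dpdim X+\updim Y\le\updim X\times Y$ are handled identically in both arguments, via Corollary~\ref{dim:rect}. (One cosmetic remark: the inequality $\dpdim X\times Y\le\updim X\times Y$ that you dispose of ``formally'' is not actually one of the links in the stated chain --- the chain needs $\dpdim X+\updim Y\le\updim X\times Y$, which you do prove correctly from Corollary~\ref{dim:rect}(i) --- so that paragraph is harmless but superfluous.)
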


\subsection*{Comparison of lower packing and box measures}
As is obvious from Section~\ref{sec:measures}, the measures $\lpack h$ and $\lboxm h$
are closely related, much closer than their upper counterparts,
but we do not really know much about their relation.
We even do not know if they are equal. The following problems seem interesting.
\begin{question}
Is there a (compact) set $X\subs\Rset$ and $s>0$ such that
\begin{enum}
\item
$\lboxm s(X)<\lpack s(X)$?
\item
$\lboxm s(X)=0$ and $\lpack s(X)=\infty$?
\item
$\lboxm s(X)=0$ and $0<\lpack s(X)<\infty$?
\end{enum}
\end{question}

A related problem, perhaps the most interesting one, is whether one can replace
$\lboxm g$ with $\lpack g$ in the integrands in Theorem~\ref{thm:lpack}:

\begin{question}
Is it true that inequalities in Theorem~\ref{thm:lpack} improve to
$\upack{gh}(E)\geq\upint\dpack{g}(E_x)\dd\upack{h}$,
$\lpack{gh}(E)\geq\upint\lpack{g}(E_x)\dd\lpack{h}$ and
$\dpack{gh}(E)\geq\upint\dpack{g}(E_x)\dd\lpack{h}$?
\end{question}
A modest variation of this problem:
\begin{question}
Let $X,Y$ be metric spaces and $g,h$ Hausdorff functions.
\begin{enum}
\item
Suppose $\dpack{g}(X)>0$ and $\upack{h}(Y)>0$.
Does it follow that $\upack{gh}(X\times Y)>0$?
\item
Suppose $\lpack{g}(X)>0$ and $\lpack{h}(Y)>0$.
Does it follow that $\lpack{gh}(X\times Y)>0$?
\end{enum}
\end{question}

Another interesting problem is that of semifiniteness of the $\lpack h$ and $\lboxm h$.
Recall that a Borel measure is semifinite if every Borel set of infinite measure
contains a Borel subset of finite positive measure.
By a theorem of H.~Joyce and D.~Preiss~\cite{MR1346667} the upper packing measure
$\upack h$ on an analytic metric space is semifinite.
\begin{question}
\begin{enum}
\item Under what conditions imposed on $X$ and $g$ are the measures $\lpack g$
and $\lboxm g$ on $X$ semifinite?
\item Is there $s>0$ such that $\lpack s$ is not semifinite on $\Rset$?
\end{enum}
\end{question}

\subsection*{Directed pre-measures}
We also do not know if $\dboxm g$ and $\dpack g$ may differ.
\begin{question}
Is there a (compact) set $X\subs\Rset$ and $s>0$ such that
\begin{enum}
\item
$\dboxm s(X)<\dpack s(X)$?
\item
$\dboxm s(X)=0$ and $\dpack s(X)=\infty$?
\item
$\dboxm s(X)=0$ and $0<\dpack s(X)<\infty$?
\end{enum}
\end{question}

\subsection*{Finite Assouad dimension hypothesis}
Proofs of Section~\ref{sec:Xiao} inevitably depend on the finite
Assouad dimension of the metric space under consideration, but there is no clue
that this hypothesis is not superfluous.
\begin{question}
Is there a metric space $X$ such that $\updim X<\infty$ and
$$
  \dpdim X<\inf\{\updim X\times Z-\updim Z:\updim Z<\infty\}?
$$
\end{question}

\subsection*{Dimensions of Borel measures}
Our dimension inequalities have counterparts for dimensions of finite Borel measures.
Recall that if
$\mu$ is a finite Borel measure in a metric space $X$ and $\dim$ is any of the
fractal dimension under consideration, the corresponding dimensions of $\mu$ are
defined by
$$
  \dim\mu=\inf\{\dim E:B\subs X\text{ Borel, } \mu(E)>0\}.
$$
It is easy to check that $\lpdim\mu=\dpdim\mu=\lbdim\mu$ and $\updim\mu=\ubdim\mu$.
Another equivalent definition of the two dimensions is
$\lpdim\mu=\sup\{s:\mu\ll\lpack s\}=\sup\{s:\mu\ll\lboxm s\}$ and
$\updim\mu=\sup\{s:\mu\ll\upack s\}=\sup\{s:\mu\ll\uboxm s\}$,
where $\ll$ denotes absolute continuity.

The following theorem is a straightforward consequence of Corollary~\ref{dim:rect}.
\begin{thm}
Let $\mu,\nu$ be finite Borel measures in metric spaces.
\begin{enum}
\item  $\updim\mu\times\nu\geq\updim\mu+\lpdim\nu$,
\item $\lpdim\mu\times\nu\geq\lpdim\mu+\lpdim\nu$.
\end{enum}
\end{thm}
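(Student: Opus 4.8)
The plan is to reduce the statement to the section inequalities of Theorem~\ref{dim:sections}, of which Corollary~\ref{dim:rect} is the rectangular special case, by slicing an arbitrary positive-measure subset of the product and applying Fubini's theorem. Recall that for a finite Borel measure $\rho$ and any of the dimensions under consideration one has $\dim\rho=\inf\{\dim E:\rho(E)>0,\ E\text{ Borel}\}$; hence it suffices to bound $\updim W$ (for (i)) and $\lpdim W$ (for (ii)) from below for every Borel set $W\subs X\times Y$ with $(\mu\times\nu)(W)>0$, and then take the infimum over all such $W$. If $\mu$ or $\nu$ is the zero measure the assertions are vacuous under the convention $\inf\emptyset=+\infty$, so I would assume both nonzero.

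First I would carry out the Fubini step. Given Borel $W\subs X\times Y$ with $(\mu\times\nu)(W)>0$, set $A=\{x\in X:\nu(W_x)>0\}$. Since the spaces are separable metric, the Borel $\sigma$-algebra of $X\times Y$ is the product $\sigma$-algebra, so $x\mapsto\nu(W_x)$ is $\mu$-measurable, $A$ is measurable, and $(\mu\times\nu)(W)=\int_X\nu(W_x)\dd\mu(x)>0$ forces $\mu(A)>0$. Therefore $\updim A\geq\updim\mu$ and $\lpdim A\geq\lpdim\mu$ straight from the definition of the dimensions of a measure. Moreover, for every $x\in A$ the section $W_x$ is a nonempty Borel subset of $Y$ with $\nu(W_x)>0$, so $\lpdim W_x\geq\lpdim\nu$, and hence also $\dpdim W_x\geq\lpdim W_x\geq\lpdim\nu$.

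Now I would apply Theorem~\ref{dim:sections} with the set $A$ just produced (admissible since $W_x\neq\emptyset$ for $x\in A$). For (ii), part (ii) of that theorem gives $\lpdim W\geq\lpdim A+\inf_{x\in A}\lpdim W_x\geq\lpdim\mu+\lpdim\nu$; taking the infimum over all admissible $W$ yields $\lpdim(\mu\times\nu)\geq\lpdim\mu+\lpdim\nu$. For (i), part (i) of the theorem gives $\updim W\geq\updim A+\inf_{x\in A}\dpdim W_x\geq\updim\mu+\lpdim\nu$, and the infimum over $W$ gives $\updim(\mu\times\nu)\geq\updim\mu+\lpdim\nu$. (An alternative route bypasses Fubini on sets via the absolute-continuity characterizations $\lpdim\rho=\sup\{s:\rho\ll\lpack s\}=\sup\{s:\rho\ll\lboxm s\}$ and $\updim\rho=\sup\{s:\rho\ll\upack s\}$: for $s<\lpdim\mu$, $t<\lpdim\nu$ one deduces $\mu\times\nu\ll\lpack{s+t}$ from the inequality $\lpack{s+t}(W)\geq\upint\lboxm{t}(W_x)\dd\lpack{s}(x)$ of Theorem~\ref{thm:lpack}(ii), and similarly $\mu\times\nu\ll\upack{s+t}$ for (i).)

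The only genuine subtlety is the measurability and Fubini bookkeeping for $x\mapsto\nu(W_x)$, which is routine in the separable metric setting, together with the harmless degenerate cases; all the mathematical weight sits inside Theorem~\ref{dim:sections}, so the deduction is, as claimed, straightforward. The one point I would take care over is that in part (i) the dimension to feed into Theorem~\ref{dim:sections}(i) is $\dpdim W_x$, and it is precisely the elementary bound $\lpdim W_x\leq\dpdim W_x$ that lets the $\nu$-positivity of the sections still deliver the lower bound $\lpdim\nu$ there.
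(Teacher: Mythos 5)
Your argument is correct and supplies exactly what the paper leaves unsaid: the paper gives no proof, merely declaring the theorem ``a straightforward consequence of Corollary~\ref{dim:rect}''. Your write-up in fact makes a point the paper glosses over: since $\dim\rho$ is an infimum over \emph{all} Borel sets of positive $\rho$-measure, the rectangular Corollary~\ref{dim:rect} alone does not suffice (a Borel $W\subs X\times Y$ with $(\mu\times\nu)(W)>0$ need not contain any useful product set); what is really needed is the section version, Theorem~\ref{dim:sections}, applied to $W$ and the Fubini set $A=\{x\in X:\nu(W_x)>0\}$ exactly as you do --- or, equivalently, the absolute-continuity route through Theorem~\ref{thm:lpack} that you sketch in parentheses. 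Your bookkeeping is sound: $A$ is Borel with $\mu(A)>0$, so $\updim A\geq\updim\mu$ and $\lpdim A\geq\lpdim\mu$; each section $W_x$ with $x\in A$ is Borel, nonempty and of positive $\nu$-measure, so $\lpdim W_x\geq\lpdim\nu$ and hence $\dpdim W_x\geq\lpdim\nu$, which is precisely the input Theorem~\ref{dim:sections}(i) requires for part (i). The only standing hypothesis worth recording is separability of the spaces (assumed throughout the paper), which ensures that the Borel $\sigma$-algebra of the product is the product $\sigma$-algebra, so that $x\mapsto\nu(W_x)$ is measurable and Fubini applies.
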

There is also a measure counterpart to Theorem~\ref{HuTa2}.
\begin{thm}
Let $\mu$ be a finite Borel measure in a metric space $X$.
%\\If\/ $\inf\{\sadim E:\mu(X\setminus E)=0\}\leq m\in\nset$,
If $\sadim X\leq m\in\nset$,
then
$$
  \inf\{\updim\mu\times\nu-\updim\nu:\text{$\nu$ is a finite Borel measure in }\Rset^m\}
  =\lpdim\mu.
$$
\end{thm}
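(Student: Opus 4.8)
The plan is to prove the two inequalities separately. For the easy ($\geq$) direction, I would invoke the measure counterpart of Corollary~\ref{dim:rect} already stated above, namely part (i) of the theorem ``$\updim\mu\times\nu\geq\updim\mu+\lpdim\nu$'' --- but notice that for a Borel measure one has $\lpdim\nu=\dpdim\nu=\lbdim\nu$, and $\dpdim\mu\leq\updim\mu$ automatically. Actually the cleanest route is: by the measure version of Corollary~\ref{dim:rect}(i), $\updim\mu\times\nu\geq\updim\mu+\lpdim\nu$. But we want $\dpdim$ on the left disappearing; more precisely, for \emph{any} finite Borel $\nu$ on $\Rset^m$ we need $\updim\mu\times\nu-\updim\nu\geq\lpdim\mu$. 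Here I would use instead the dimension inequality from Theorem~\ref{dim:sections}(i) or Corollary~\ref{dim:rect}(i) in the form $\updim X\times Y\geq\dpdim Y+\updim X$ applied to suitable full-measure subsets: choosing Borel sets $E$ with $\mu(E)>0$ and $F$ with $\nu(F)>0$ realizing the measure dimensions, one gets $\updim(E\times F)\geq \dpdim E+\updim F$, and since $(\mu\times\nu)(E\times F)>0$ this gives $\updim\mu\times\nu\geq\dpdim\mu+\updim\nu=\lpdim\mu+\updim\nu$ using $\dpdim\mu=\lpdim\mu$. Rearranging yields the lower bound on the infimum.

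For the hard ($\leq$) direction I would exhibit, for each $\eps>0$, a finite Borel measure $\nu$ on $\Rset^m$ with $\updim\mu\times\nu-\updim\nu\leq\lpdim\mu+\eps$. The natural candidate is the measure $\mu_{\zz}$ built in Lemma~\ref{xi111}: fix $s>\dpdim X\geq\lpdim\mu$ (so that by Proposition~\ref{dim:dp}(v) there is a scale $\Del$ with $\boxm{s}_\Del(X)=0$, hence in particular $\boxm{s}_{\Del}(E)=0$ on any Borel $E$), take the compact set $\zz=\zz_s\subs\Rset^m$ and the evenly distributed probability measure $\nu=\mu_{\zz_s}$ from Lemma~\ref{xi111}, which satisfies $\updim\nu=\ubdim\zz_s=m-s$ (since $\uboxm{m-s}_0(\zz_s)=1$ forces $\ubdim\zz_s=m-s$; and $\nu=\uboxm{m-s}$ on Borel subsets, giving $\updim\nu=\ubdim\nu=m-s$). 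Now I would need $\updim\mu\times\nu\leq m$: this is where one invokes Theorem~\ref{thm:HuTa}(ii), or rather its refinement appearing in the proof of Theorem~\ref{HuTa2} using countable $(Q_i,m+\eps)$-homogeneity. Concretely, pick a Borel set $B$ with $(\mu\times\nu)(B)>0$; then $B$ projects to a Borel set in $X$ of positive $\mu$-measure, and since $\sadim X\leq m$ one covers $X$ by countably many $(Q_i,m+\eps)$-homogeneous pieces $X_i$ with $\boxm{s}_{\Del,0}(X_i)<\infty$; the estimates~\eqref{xi4} and~\eqref{xi6} of the proof of Theorem~\ref{thm:HuTa}, used with $m+\eps$ in place of $m$ exactly as in the proof of Theorem~\ref{HuTa2}, give $\uboxm{m+\eps}(X_i\times\zz_s)=0$, hence $\updim(X_i\times\zz_s)\leq m$, hence $\updim(X\times\zz_s)\leq m$, hence $\updim\mu\times\nu\leq m$. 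Therefore $\updim\mu\times\nu-\updim\nu\leq m-(m-s)=s$, and letting $s\downarrow\dpdim X=\lpdim\mu$ finishes.

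The main obstacle I anticipate is the bookkeeping that connects the measure-theoretic dimensions to the set-theoretic ones: one must be careful that $\updim\mu\times\nu$ equals $\ubdim$ of a positive-$(\mu\times\nu)$-measure Borel set, and that for the product measure $\mu\times\nu$ a Borel set of positive product measure contains a measurable rectangle $E'\times F'$ of positive measure only up to null sets --- so I would work with the projection $\pi_X(B)$ (which is analytic, hence $\mu$-measurable of positive measure, and one passes to a Borel subset) rather than a genuine rectangle, and separately control the $\zz_s$-fibre using that $\nu$ lives on all of $\zz_s$. A second, milder point is verifying the identities $\updim\nu=m-s$ and $\lpdim\mu=\dpdim\mu=\lbdim\mu$ cited from the remark ``It is easy to check that $\lpdim\mu=\dpdim\mu=\lbdim\mu$'', which I would simply reference. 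Finally, to get a \emph{single} $\nu$ realizing the infimum rather than an $\eps$-net of them, I would glue the measures $\nu_k$ associated to $s_k=\dpdim X+\tfrac1k$, placed on disjoint translates $\zz_{s_k}\subs[2^{-k-1},2^{-k}]$ as in the proof of Theorem~\ref{HuTa2}, and set $\nu=\sum_k 2^{-k}\nu_k$; then $\updim\nu=\sup_k(m-s_k)=m-\dpdim X$ and $\updim\mu\times\nu=\sup_k\updim\mu\times\nu_k\leq m$, yielding equality.
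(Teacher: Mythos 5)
Your overall strategy is the same as the paper's --- the lower bound comes from the product inequality for measure dimensions, and the upper bound from the compact set $\zz_s$ of Theorem~\ref{thm:HuTa} together with its evenly distributed measure $\mu_s$ from Lemma~\ref{xi111} --- but the upper-bound half contains a genuine gap. You fix $s>\dpdim X$ and finish by ``letting $s\downarrow\dpdim X=\lpdim\mu$''; however $\dpdim X$ and $\lpdim\mu$ are not equal in general. The measure dimension $\lpdim\mu=\inf\{\lpdim E:\mu(E)>0\}$ can be far below every dimension of the ambient space (take $\mu$ a point mass on $X=[0,1]$: then $\lpdim\mu=0$ while $\dpdim X=1$). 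Running the construction over all of $X$ therefore only shows that the infimum in the statement is at most $\dpdim X$. The missing idea --- and the crux of the paper's argument --- is to first pick a Borel set $E\subs X$ with $\mu(E)>0$ and $\dpdim E<s=\lpdim\mu+\eps$, which exists because $\lpdim\mu=\dpdim\mu$, and then to apply Theorem~\ref{HuTa2} and its proof to $E$ (note $\sadim E\leq m$) rather than to $X$. Since $(\mu\times\mu_s)(E\times\zz_s)=\mu(E)>0$, this gives $\updim\mu\times\mu_s\leq\updim E\times\zz_s\leq m\leq\updim\mu_s+\lpdim\mu+\eps$, which is the required bound. With this change the rest of your computation ($\updim\mu_s\geq m-s$ via Lemma~\ref{xi111} and Proposition~\ref{dim:dp}) is fine.

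Two further remarks. First, your lower-bound argument via rectangles is not valid: $\updim\mu\times\nu$ is an infimum over \emph{all} Borel sets of positive product measure, so producing one rectangle $E\times F$ with $\updim(E\times F)\geq\dpdim E+\updim F$ bounds $\updim\mu\times\nu$ from above, not from below. The correct route is the one you state first and then abandon: the preceding theorem of the paper, transported by the natural isometry between $X\times\Rset^m$ and $\Rset^m\times X$, gives $\updim\mu\times\nu=\updim\nu\times\mu\geq\updim\nu+\lpdim\mu$ directly (its proof is a Fubini argument over all positive-measure Borel sets, not just rectangles). Second, since the statement only asserts an infimum, the final gluing $\nu=\sum_k2^{-k}\nu_k$ is unnecessary; note also that for a measure $\updim$ is an infimum over positive-measure sets, so the glued measure satisfies $\updim\nu\leq\inf_k(m-s_k)$ rather than $\sup_k(m-s_k)$, and would not realize the infimum anyway.
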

\begin{proof}[Proof in outline]
Let $\eps>0$ and $s=\lpdim\mu+\eps$. There is a set $E\subs X$ such that
$\mu(E)>0$ and $\dpdim E<s$. By Theorem~\ref{HuTa2} and its proof there is
a compact set $\zz_s\subs\Rset^m$ such that $\updim E\times\zz_s\leq m$
and, by Lemma~\ref{xi111}, the corresponding measure $\mu_s$ on $\zz_s$
satisfies $\updim\mu_s\geq m-s$. Thus
$\updim\mu\times\mu_s\leq\updim E\times\zz_s\leq m\leq
\updim\mu_s+\lpdim\mu+\eps$.
\end{proof}

%%%%%%%%%%%%%%%%%%%%%%%%%%%%%%%%%%%%%%%%%%%%%%%%%%%%%%%%%%%
% BIBLIOGRAPHY ----------------------------------------------------
\bibliographystyle{amsplain}
%\bibliography{product}
%\end{document}
% -----------------------------------------------------------------
\providecommand{\bysame}{\leavevmode\hbox to3em{\hrulefill}\thinspace}
\providecommand{\MR}{\relax\ifhmode\unskip\space\fi MR }
% \MRhref is called by the amsart/book/proc definition of \MR.
\providecommand{\MRhref}[2]{%
  \href{http://www.ams.org/mathscinet-getitem?mr=#1}{#2}
}
\providecommand{\href}[2]{#2}

% -----------------------------------------------------------------
\end{document}